\newtheorem*{A}{Main Theorem}
\newtheoremstyle{theorem}
     {11pt}
     {11pt}
     {}
     {}
     {\bfseries}
     {}
     {.5em}
     {\noindent\thmnumber{#2}. \thmname{#1}\thmnote{#3}}
\theoremstyle{theorem}
\newtheorem{thm}{Theorem}[section]
\newtheorem{lemma}[thm]{Lemma}
\newtheorem{propo}[thm]{Proposition}
\newtheorem{remark}[thm]{Remark}
\newtheorem{fact}[thm]{Fact}
\newtheorem{coro}[thm]{Corollary}
\newtheorem{ques}[thm]{Question}
\newtheoremstyle{claim}
     {11pt}
     {11pt}
     {}
     {}
     {\bfseries}
     {}
     {.5em}
     {\noindent\thmname{#1} \thmnote{#3}.}
\theoremstyle{claim}
\newtheorem*{claim}{Claim}
\newtheoremstyle{example}
     {11pt}
     {11pt}
     {}
     {}
     {\bfseries}
     {.}
     {.5em}
     {\noindent\thmnumber{#2}. \thmname{#1}\thmnote{#3}}
\theoremstyle{example}
\newtheorem{ex}[thm]{Example}
\newtheorem{theproof}[thm]{Proof of the Main Theorem}
\newcommand{\CL}[1]{\mathnormal{CL}(#1)}
\newcommand{\K}[1]{\mathcal{K}(#1)}
\newcommand{\F}[2][\empty]{\mathcal{F}_{#1}(#2)}
\newcommand{\f}{\mathbf{F}}
\newcommand{\la}{\langle}
\newcommand{\ra}{\rangle}
\newcommand{\h}{\mathcal{H}}
\newcommand{\N}{\mathbb{N}}
\newcommand{\co}[1]{\mathnormal{CO}(#1)}
\newcommand{\Q}{\mathbb{Q}}
\newcommand{\q}[3][\empty]{\mathcal{Q}^{#1}\left(#2,#3\right)}
\newcommand{\qs}[1]{\mathcal{Q}(#1)}
\newcommand{\nc}[2]{\mathfrak{nc}(#1,#2)}
\newcommand{\inte}[2][X]{\mathrm{int}_{#1}\!\left(#2\right)}
\newcommand{\cl}[2][X]{\mathrm{cl}_{#1}\!\left(#2\right)}
\newcommand{\C}{\mathcal{C}}
\newcommand{\D}{\mathcal{D}}
\newcommand{\s}{\mathcal{S}}
\newcommand{\U}{\mathcal{U}}
\newcommand{\V}{\mathcal{V}}
\newcommand{\cs}{{}\sp\omega2}
\newcommand{\filt}{\mathfrak{F}}
\newcommand{\cont}{\mathfrak{c}}
\title{Disconnectedness properties of Hyperspaces}
\author[R. Hern\'andez-Guti\'errez]{Rodrigo Hern\'andez-Guti\'errez}
	\address[R. Hern\'andez-Guti\'errez]{Instituto de Matem\'aticas, Universidad Nacional
Aut\'onoma de M\'exico, Ciudad Universitaria, M\'{e}xico D.F., 04510, M\'exico}
	\email[R. Hern\'andez-Guti\'errez]{rod@matem.unam.mx}
	\thanks{This paper is part of the first author's doctoral dissertation directed by the second author. Research was supported by PAPIIT grant IN-102910 and CONACyT scholarship for Doctoral Students.}
\author[A. Tamariz-Mascar\'ua]{Angel Tamariz-Mascar\'ua}
	\address[A. Tamariz-Mascar\'ua]{Departamento de Matem\'aticas, Facultad de Ciencias, Universidad Nacional Aut\'onoma de M\'exico,
Ciudad Universitaria, M\'{e}xico D.F., 04510, M\'exico}
	\email[A. Tamariz-Mascar\'ua]{atamariz@servidor.unam.mx}
\begin{document}

\date{September 29, 2011}
\subjclass[2010]{54B20, 54G05, 54G10, 54G12, 54G20}
\keywords{hyperspaces, Vietoris topology, $F\sp\prime$-space, $P$-space, hereditarily disconnected}

\begin{abstract}
Let $X$ be a Hausdorff space and let $\h$ be one of the hyperspaces $\CL{X}$, $\K{X}$, $\F{X}$ or $\F[n]{X}$ ($n$ a positive integer) with the Vietoris topology. We study the following disconnectedness properties for $\h$: extremal disconnectedness, being a $F\sp\prime$-space, $P$-space or weak $P$-space and hereditary disconnectedness. Our main result states: if $X$ is Hausdorff and $F\subset X$ is a closed subset such that $(a)$ both $F$ and $X-F$ are totally disconnected, $(b)$ the quotient $X/F$ is hereditarily disconnected, then $\K{X}$ is hereditarily disconnected. We also show an example proving that this result cannot be reversed.
\end{abstract}

\maketitle

Given a $T_1$ space $X$, let $\CL{X}$ be the hyperspace of nonempty closed subsets of $X$ with the Vietoris topology. Let us consider the following hyperspaces
\begin{eqnarray}
\K{X}&=&\{A\in \CL{X}:A\textrm{ is compact}\},\nonumber\\
\F{X}&=&\{A\in \CL{X}: A \textrm{ is finite}\},\nonumber\\
\F[n]{X}&=&\{A\in \CL{X}:|A|\leq n\}\textrm{ for $n$ a positive integer.}\nonumber
\end{eqnarray}

The study of the Vietoris topology on hyperspaces was first motivated by Ernest Michael's outstanding paper \cite{michael}. Concerning connectedness properties, Michael stated the following:

\begin{thm}\cite[Theorem 4.10]{michael}\label{michael1}
Let $X$ be a $T_1$ space and $\F{X}\subset\h\subset \CL{X}$. If any of $X$, $\F[n]{X}$ ($n$ a positive integer) or $\h$ are connected, then the following spaces are also connected: $X$, $\F[m]{X}$ for each positive integer $m$ and every $\h'$ satisfying $\F{X}\subset\h'\subset \CL{X}$.
\end{thm}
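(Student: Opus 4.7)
The plan is to establish the chain of equivalences
\[
X \text{ connected}\ \Longleftrightarrow\ \F[n]{X} \text{ connected for every } n\geq 1\ \Longleftrightarrow\ \h \text{ connected for every } \h \text{ with } \F{X}\subset\h\subset\CL{X},
\]
from which the theorem follows at once, since it asserts exactly that connectedness of any one of these spaces implies connectedness of all of them.

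First I would dispatch the easy implications, namely that disconnectedness of $X$ propagates to every hyperspace in sight. Suppose $X=A\cup B$ with $A,B$ nonempty, disjoint and clopen. The Vietoris subbasic set $\la A\ra:=\{F\in\CL{X}:F\subset A\}$ is open because $A$ is open, and its complement $\{F\in\CL{X}:F\cap B\neq\emptyset\}$ is subbasic open because $B$ is open; hence $\la A\ra$ is clopen in $\CL{X}$. Intersecting $\la A\ra$ with $\F[n]{X}$ or with any $\h$ containing $\F{X}$ produces a proper nonempty clopen subset (it contains $\{a\}$ for any $a\in A$ and misses $\{b\}$ for any $b\in B$), giving both ``$\h$ connected $\Rightarrow X$ connected'' and ``$\F[n]{X}$ connected $\Rightarrow X$ connected''.

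For the converse, assume $X$ is connected. The central step is to introduce the symmetric-product map
\[
u_n:X^n\longrightarrow\F[n]{X},\qquad (x_1,\dots,x_n)\longmapsto\{x_1,\dots,x_n\},
\]
and check that it is continuous by a direct inspection of the Vietoris subbase. Since $X^n$ is connected, so is its image $\F[n]{X}=u_n(X^n)$. Then $\F{X}=\bigcup_{n\geq 1}\F[n]{X}$ is a union of connected sets all containing a common singleton, hence connected. Finally, $\F{X}$ is dense in every $\h$ with $\F{X}\subset\h\subset\CL{X}$: for a nonempty basic open set $\la U_1,\dots,U_k\ra$, each $U_i$ is nonempty, and any choice of $x_i\in U_i$ yields $\{x_1,\dots,x_k\}\in\F{X}\cap\la U_1,\dots,U_k\ra$. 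Since a space containing a connected dense subspace is connected, $\h$ is connected.

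The main obstacle, really the only nontrivial point, is the continuity of $u_n$: this is the mechanism by which connectedness of $X$ is transported to the symmetric product. Once that is in hand, the remaining steps are a standard density argument in the Vietoris topology together with the elementary fact that a union of connected sets sharing a common point is connected.
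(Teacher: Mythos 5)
Your proposal is correct, and in fact the paper offers no proof of this statement at all---it is quoted from Michael \cite[Theorem 4.10]{michael}---while your route (the clopen sets $A\sp+\cap\h$ and $A\sp+\cap\F[n]{X}$ to push a disconnection of $X$ into the hyperspaces; the continuous surjection $u_n:X\sp n\to\F[n]{X}$; the increasing union $\F{X}=\bigcup_{n\geq 1}\F[n]{X}$ with the common point $\{x_0\}$; and density of $\F{X}$ in every $\h$ with $\F{X}\subset\h\subset\CL{X}$) is essentially Michael's original argument. The one step you leave as a sketch, continuity of $u_n$, is indeed routine on the subbase: $u_n\sp{\leftarrow}[U\sp+]=U\sp n$ and $u_n\sp{\leftarrow}[U\sp-]=\bigcup_{i\leq n}\pi_i\sp{\leftarrow}[U]$, where $\pi_i$ is the $i$-th projection (and the $T_1$ hypothesis is what guarantees $\F{X}\subset\CL{X}$ in the first place).
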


Recall that a space $X$ is \emph{zero-dimensional} if for every closed subset $A\subset X$ and $x\in X-A$, there is a clopen set $O$ such that $x\in O$ and $O\cap A=\emptyset$. We say that $X$ is \emph{totally disconnected} if for every pair of points $x,y\in X$ with $x\neq y$, there is a clopen set $O$ such that $x\in O$, $y\notin O$.

\begin{thm}\cite[Proposition 4.13]{michael}\label{michael2}
For a $T_1$ space $X$ we have: 
\begin{itemize}
\item $X$ is zero-dimensional if and only if $\K{X}$ is zero-dimensional,
\item $X$ is totally disconnected if and only if $\K{X}$ is totally disconnected,
\item $X$ is discrete if and only if $\K{X}$ is discrete,
\item $X$ has no isolated points if and only if $\CL{X}$ has no isolated points.
\end{itemize}
\end{thm}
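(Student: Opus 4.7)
The plan rests on two preliminary observations. First, the assignment $x\mapsto\{x\}$ is a topological embedding of $X$ onto $\F[1]{X}$, which sits inside $\K{X}\subset\CL{X}$. Since zero-dimensionality, total disconnectedness, and discreteness are all hereditary properties, the ``only if'' directions of the first three items follow at once from this embedding; the fourth item is handled separately because isolated points need not be inherited by subspaces. Second, whenever $V_1,\dots,V_k\subset X$ are clopen, the basic Vietoris set $\la V_1,\dots,V_k\ra$ is clopen in $\CL{X}$, because its complement is the union of the subbasic Vietoris open sets $\{A\in\CL{X}:A\cap(X-\bigcup_i V_i)\neq\emptyset\}$ and $\{A\in\CL{X}:A\subset X-V_j\}$ for $j=1,\dots,k$. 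This single fact furnishes all the clopen neighborhoods needed in $\K{X}$.

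For the ``if'' direction of the zero-dimensional item, take $K\in\K{X}$ and a basic Vietoris neighborhood $\la U_1,\dots,U_n\ra$ of $K$. For each $x\in K$ pick $i(x)$ with $x\in U_{i(x)}$ and, by zero-dimensionality of $X$, a clopen $W_x$ with $x\in W_x\subset U_{i(x)}$; compactness of $K$ yields a finite subcover. Grouping these clopen sets by index and filling in the empty groups with a clopen neighborhood inside $U_j$ of some selected point of $K\cap U_j$ produces clopen $V_j\subset U_j$ with $K\in\la V_1,\dots,V_n\ra\subset\la U_1,\dots,U_n\ra$, which by the second preliminary observation is clopen in $\K{X}$. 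For total disconnectedness, given $A\neq B$ in $\K{X}$ and $x\in A\setminus B$, use total disconnectedness of $X$ to separate $x$ from each $y\in B$ by a clopen $V_y$, extract a finite cover of $B$ by compactness, and let $V$ be its union; then $\la V\ra\cap\K{X}$ is clopen, contains $B$, and excludes $A$. Discreteness is the simplest: if $X$ is discrete and $K=\{x_1,\dots,x_n\}\in\K{X}$, then $\la\{x_1\},\dots,\{x_n\}\ra=\{K\}$ is open.

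The ``no isolated points'' statement needs a dedicated argument in both directions because $\CL{X}$ contains arbitrary closed sets. For the backward direction, if $x\in X$ is isolated then $\la\{x\}\ra=\{\{x\}\}$ is open in $\CL{X}$. For the forward direction, suppose $X$ has no isolated points and that $\{A\}=\la U_1,\dots,U_n\ra\cap\CL{X}$ is a basic singleton; the goal is to produce a distinct second element. A preliminary $T_1$ argument gives that every nonempty open subset of $X$ is infinite, for otherwise intersecting a finite open set $U$ with the complements of all but one of its points would produce an isolated singleton. If some $x\in\bigcup_i U_i$ lies outside $A$, then $A\cup\{x\}$ is a distinct closed member of the basic set. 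Otherwise $A=\bigcup_i U_i$ is open and thus infinite, and choosing one point $x_i\in A\cap U_i$ for each $i$ gives a finite $T=\{x_1,\dots,x_n\}\subsetneq A$ still in $\la U_1,\dots,U_n\ra$. The main obstacle to anticipate is exactly this last case: one has to rule out the possibility that the Vietoris basic neighborhood pins $A$ down, and that is what forces the detour through the ``nonempty opens are infinite'' observation. Everything else in the plan is direct compactness-plus-separation bookkeeping.
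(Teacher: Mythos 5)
Your proof is correct, but there is nothing in the paper to compare it against: the paper states this result as a quotation of \cite[Proposition 4.13]{michael} and gives no proof, so your argument can only be measured against the standard (essentially Michael's original) one --- which is exactly what you have reconstructed. Your two pillars are the right ones: the embedding $x\mapsto\{x\}$ of $X$ onto $\F[1]{X}\subset\K{X}$ plus heredity handles one implication of each of the first three items, and the observation that $\la V_1,\ldots,V_k\ra$ is clopen in $\CL{X}$ whenever the $V_i$ are clopen (its complement being $\left(X-\bigcup_i V_i\right)\sp-\cup\bigcup_j (X-V_j)\sp+$) supplies all needed clopen neighborhoods for the converse implications; the compactness bookkeeping in the zero-dimensional and totally disconnected items is sound, including the step of refilling empty index groups. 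The isolated-points item is also handled correctly: the dichotomy (either some $x\in\bigcup_i U_i$ lies outside $A$, so $A\cup\{x\}$ is a second member, or $A=\bigcup_i U_i$ is open, hence infinite by your $T_1$ lemma, so a finite transversal $T\subsetneq A$ is a second member) is exactly the needed case analysis. One cosmetic slip: you have the directions named backwards. For an item of the form ``$X$ has $P$ if and only if $\K{X}$ has $P$,'' heredity along the embedding proves the \emph{if} half ($\K{X}$ has $P$ implies $X$ does), while your constructive clopen-box arguments prove the \emph{only if} half; since both implications of every item are in fact established, this is purely a labeling error and not a gap.
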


In this paper, we present similar results about other classes of disconnected spaces. Most of our results will be in the realm of Hausdorff spaces. Tychonoff spaces will be used when the classes of spaces considered require so.

First we will consider classes of highly disconnected spaces. If $X$ is a space and $p\in X$, we call $p$ a \emph{$P$-point} of $X$ if $p$ belongs to the interior of every $G_\delta$ set that contains it. We say that $X$ is a \emph{$P$-space} if all its points are $P$-points of $X$. For properties of $P$-spaces, see problem 1W of \cite{porterwoods}. Every regular $P$-space is zero-dimensional, so being a $P$-space is a stronger condition than zero-dimensionality in the realm of regular spaces. In Sections \ref{ppoints} and \ref{fspaces}, we study when a hyperspace can be a $P$-space using other classes of spaces such as $F$-spaces. After that, in section \ref{compactisfinite} we give remarks on spaces in which compact subsets are finite ($P$-spaces are of this kind by Remark \ref{fspacefinite}).

On the other hand, we consider a property roughly weaker than total disconnectedness. We call a space $X$ hereditarily disconnected if every nonempty connected subset of $X$ is a singleton. Clearly, every totally disconnected space is hereditarily disconnected but there are examples (given below) that show these classes do not coincide. In \cite[83.5]{ill-nad}, Illanes and Nadler ask whether $\CL{X}$ or $\K{X}$ are hereditarily disconnected when $X$ is hereditarily disconnected and metrizable. In \cite{pol}, E. Pol and R. Pol answer this in the negative and make some interesting remarks. In Section \ref{herdisc}, we extend the work of E. Pol and R. Pol and give some examples. Our main result is

\begin{A}
Let $X$ be a Hausdorff space. Assume that there is a closed subset $F\subset X$ such that
\begin{itemize}
\item[(a)] both $F$ and $X-F$ are totally disconnected,
\item[(b)] the quotient $X/F$ is hereditarily disconnected.
\end{itemize}
Then $\K{X}$ is hereditarily disconnected.
\end{A}

We also show that the statement of the Main Theorem cannot be reversed by giving an example in Section \ref{example}.

\section{Preliminaries}

We denote by $\N$ the set of positive integers, $\omega=\N\cup\{0\}$, the unit interval $I=[0,1]$ and the set of rational numbers $\Q$ with the Euclidean topology. For any space $X$, let $\co{X}$ denote the collection of clopen subsets of $X$. The cardinality of a set $A$ will be denoted by $|A|$. A set $A$ is countable if $|A|\leq\omega$.

Let $X$ be a $T_1$ space. The Vietoris topology on $\CL{X}$ is the topology generated by the sets of the form
\begin{eqnarray}
U\sp+&=&\{A\in CL(X):A\subset U\},\nonumber\\
U\sp-&=&\{A\in CL(X):A\cap U\neq\emptyset\},\nonumber
\end{eqnarray}
where $U$ is an open subset of $X$. It is easy to see that a basis of the Vietoris topology consists of the collection of sets of the form
$$
\la U_0,\ldots U_n \ra=\{A\in CL(X):A\subset U_0\cup\cdots\cup U_n\textrm{ and if }i\leq n, U_i\cap A\neq\emptyset\},
$$
where $n<\omega$ and $U_0,\ldots, U_n$ are nonempty open subsets of $X$. For $n\in\N$, the hyperspace $\F[n]{X}$  is called the $n$-th symmetric product of $X$. Notice that $\F[1]{X}$ is homeomorphic to $X$ under the map $\{x\}\mapsto x$. We will use the following straightforward generalization of \cite[13.3]{ill-nad} several times.

\begin{lemma}\label{associated}
Let $X$ and $Y$ be Hausdorff spaces and $f:X\to Y$ be a continuous function. Define $f\sp\ast:\K{X}\to\K{Y}$ by $f\sp\ast(T)=f[T]$. Then $f\sp\ast$ is a continuous function.
\end{lemma}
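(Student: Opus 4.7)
The plan is to verify directly that $f^\ast$ is well-defined and then check continuity on subbasic open sets of the Vietoris topology on $\K{Y}$.

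First I would check that $f^\ast$ genuinely maps into $\K{Y}$. For $T\in\K{X}$, the image $f[T]$ is compact as the continuous image of a compact set. Since $Y$ is Hausdorff, compact subsets of $Y$ are closed, so $f[T]\in\CL{Y}$, and hence $f[T]\in\K{Y}$. This is the one place where the Hausdorff hypothesis on $Y$ enters; without it $f[T]$ might fail to be closed and $f^\ast$ would not land in $\CL{Y}$ as defined.

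Next I would establish continuity by pulling back the subbase $\{U^+\cap\K{Y},\,U^-\cap\K{Y} : U\subseteq Y \text{ open}\}$. For any open $U\subseteq Y$, I claim
\[
(f^\ast)^{-1}(U^+\cap\K{Y}) = (f^{-1}[U])^+\cap\K{X} \quad\text{and}\quad (f^\ast)^{-1}(U^-\cap\K{Y}) = (f^{-1}[U])^-\cap\K{X}.
\]
Both equalities are immediate from the definitions: $f[T]\subset U$ if and only if $T\subset f^{-1}[U]$, and $f[T]\cap U\neq\emptyset$ if and only if $T\cap f^{-1}[U]\neq\emptyset$. Since $f$ is continuous, $f^{-1}[U]$ is open in $X$, so both preimages are subbasic open sets of the Vietoris topology on $\K{X}$.

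Since the preimage of every subbasic open set is open, $f^\ast$ is continuous. There is no real obstacle here; the only subtle point is keeping the Hausdorff assumption on $Y$ in mind so that compactness of $f[T]$ translates into membership in $\CL{Y}$. The argument is a direct generalization of \cite[13.3]{ill-nad}, the only difference being that we avoid metrizability by working with the subbase for the Vietoris topology rather than with convergent sequences.
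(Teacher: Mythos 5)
Your proof is correct: well-definedness uses exactly that continuous images of compact sets are compact and that compact subsets of the Hausdorff space $Y$ are closed, and the two subbasic preimage identities $(f^\ast)^{-1}(U^+\cap\K{Y})=(f^{-1}[U])^+\cap\K{X}$ and $(f^\ast)^{-1}(U^-\cap\K{Y})=(f^{-1}[U])^-\cap\K{X}$ are verified correctly, which suffices for continuity since preimages respect the generating subbase. The paper itself offers no proof, merely calling the lemma a straightforward generalization of \cite[13.3]{ill-nad}, and your subbase computation is precisely the standard argument that remark alludes to, so you take essentially the intended route.
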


Let $X$ be a Tychonoff space. A subset $A$ of a space $X$ is $C\sp\ast$-embedded if every bounded real-valued continuous function defined on $A$ can be extended to $X$. A zero set of $X$ is a set of the form $f\sp{\leftarrow}(0)$, where $f$ is a continuous real-valued function defined on $X$; a cozero set of $X$ is the complement of a zero set of $X$. In a Tychonoff space, cozero sets form a basis for its topology. 

\begin{lemma}\label{cozerohyperspace}
If $U$ is a cozero set of a Hausdorff space $X$, then $U\sp+\cap \K{X}$ and $U\sp-\cap \K{X}$ are cozero sets of $\K{X}$.
\end{lemma}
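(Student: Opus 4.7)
My plan is to exhibit explicit continuous real-valued functions on $\K{X}$ whose nonzero sets are precisely $U^+\cap\K{X}$ and $U^-\cap\K{X}$. Since $U$ is cozero, fix a continuous $g:X\to\mathbb{R}$ with $U=X\setminus g^{\leftarrow}(0)$; replacing $g$ by $|g|$, I obtain a continuous $f:X\to[0,\infty)$ with $f^{\leftarrow}(0)=X\setminus U$.

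Now define $\phi,\psi:\K{X}\to[0,\infty)$ by
$$
\phi(A)=\max\{f(x):x\in A\},\qquad \psi(A)=\min\{f(x):x\in A\},
$$
which are well-defined since each $A$ is a nonempty compact set and $f$ is continuous. Using $f\geq 0$: $\phi(A)=0$ iff $f\equiv 0$ on $A$ iff $A\cap U=\emptyset$, and $\psi(A)=0$ iff $A$ contains a zero of $f$ iff $A\not\subset U$. Consequently $U^-\cap\K{X}=\{A\in\K{X}:\phi(A)\neq 0\}$ and $U^+\cap\K{X}=\{A\in\K{X}:\psi(A)\neq 0\}$, so the lemma will follow as soon as $\phi$ and $\psi$ are shown to be continuous.

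For continuity, Lemma \ref{associated} applied to $f:X\to\mathbb{R}$ gives a continuous $f^{\ast}:\K{X}\to\K{\mathbb{R}}$, $T\mapsto f[T]$. Writing $M,m:\K{\mathbb{R}}\to\mathbb{R}$ for the maximum and minimum maps, we have $\phi=M\circ f^{\ast}$ and $\psi=m\circ f^{\ast}$, so the task reduces to checking that $M$ and $m$ are continuous on $\K{\mathbb{R}}$. This is the only step requiring a direct Vietoris computation, and it is a standard one: given $K_0\in\K{\mathbb{R}}$ with $c=M(K_0)$ and $\varepsilon>0$, the basic open set $\la(-\infty,c+\varepsilon),\,(c-\varepsilon,c+\varepsilon)\ra$ contains $K_0$ and forces $M(K)\in(c-\varepsilon,c+\varepsilon)$ for every $K$ in it; the argument for $m$ is symmetric. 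I do not expect any genuine obstacle here, merely the minor bookkeeping of packaging an ``$A\subset V$'' condition together with an ``$A\cap W\neq\emptyset$'' condition into a single Vietoris basic neighborhood.
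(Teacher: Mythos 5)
Your proof is correct and follows essentially the same route as the paper's: both arguments realize $U^+\cap\K{X}$ and $U^-\cap\K{X}$ as the cozero sets of $\min\circ f^\ast$ and $\max\circ f^\ast$, using the induced map $f^\ast$ from Lemma \ref{associated}. The only cosmetic differences are that the paper takes $f$ with values in $I$ rather than $[0,\infty)$ (so it works with $\K{I}$ instead of $\K{\mathbb{R}}$) and dismisses the continuity of $\min$ and $\max$ as ``easily seen,'' whereas you verify it explicitly with a Vietoris basic neighborhood.
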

\begin{proof}
Let $f:X\to I$ be such that $U=f^{\leftarrow}[(0,1]]$. Consider the continuous function $f\sp\ast:\K{X}\to\K{I}$ from Lemma \ref{associated}. The functions $\min{}:\K{I}\to I$ and $\max{}:\K{I}\to I$ that take each closed subset of $I$ to its minimal and maximal elements, respectively, are easily seen to be continuous. Finally, notice that
\begin{eqnarray}
U^+\cap\K{X}&=&(\min{}\circ f\sp\ast)^{\leftarrow}[(0,1]],\nonumber\\
U^-\cap\K{X}&=&(\max{}\circ f\sp\ast)^{\leftarrow}[(0,1]],\nonumber
\end{eqnarray}
which completes the proof.
\end{proof}

Recall that a Tychonoff space is pseudocompact if every locally finite collection of open sets is finite. Let $X\subset I\sp\kappa$ for some cardinal $\kappa\geq 1$ and let $\pi_A:I\sp\kappa\to I\sp A$ be the projection for each $\emptyset\neq A\subset\kappa$. We say that $X$ is $\omega$-dense in $I\sp\kappa$ if every time $N$ is a countable nonempty subset of $\kappa$ it follows that $\pi_N[X]=I\sp N$.

\begin{lemma}\cite[Proposition 1]{shakh}\label{omegadense}
Let $X$ be a dense subspace of $I^\kappa$ for some cardinal $\kappa\geq 1$. Then $X$ is pseudocompact if and only if $X$ is $\omega$-dense in $I^\kappa$.
\end{lemma}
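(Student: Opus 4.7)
The plan is to prove both implications separately, using only standard facts about pseudocompactness.

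For the forward direction, suppose $X$ is pseudocompact and let $N\subset\kappa$ be countable and nonempty. The image $\pi_N[X]$ is pseudocompact (being a continuous image of $X$) and dense in $I^N$ (since $X$ is dense in $I^\kappa$). Because $N$ is countable, $I^N$ is metrizable, and in a metrizable space every pseudocompact subspace is compact, hence closed. Therefore $\pi_N[X]$ is both dense and closed in $I^N$, which forces $\pi_N[X]=I^N$.

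For the converse, I would use the fact that, for Tychonoff spaces, pseudocompactness is equivalent to feeble compactness: every sequence $(U_n)_{n\in\omega}$ of nonempty open subsets of $X$ has an accumulation point. Given such a sequence, pick for each $n$ a nonempty basic open box $B_n=\pi_{F_n}^{\leftarrow}[W_n]$ of $I^\kappa$ with $F_n\subset\kappa$ finite, such that $B_n\cap X\subset U_n$, and choose $y_n\in\pi_N[B_n]$. Put $N=\bigcup_n F_n$, which is countable. Since $I^N$ is compact metrizable, the sequence $(y_n)$ has a cluster point $y\in I^N$; by $\omega$-density there is $z\in X$ with $\pi_N(z)=y$. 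I then claim that $z$ is an accumulation point of $(U_n)$ in $X$.

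To verify the claim, consider a basic neighborhood $U=X\cap\pi_E^{\leftarrow}[H]$ of $z$, with $E\subset\kappa$ finite. Split $E=E_1\sqcup E_2$ where $E_1=E\cap N$ and $E_2=E\setminus N$, and shrink $H$ to a product $H_1\times H_2$ of open boxes around $\pi_{E_1}(z)$ and $\pi_{E_2}(z)$. Because $y$ is a cluster point of $(y_n)$ in $I^N$, for infinitely many $n$ there is a point $w_n\in B_n$ with $\pi_{E_1}(w_n)\in H_1$. Applying $\omega$-density to the countable set $M=N\cup E_2$, I obtain $u_n\in X$ agreeing with $w_n$ on every coordinate in $N$ and with $z$ on every coordinate in $E_2$. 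A coordinate check then gives $\pi_{F_n}(u_n)=\pi_{F_n}(w_n)\in W_n$, $\pi_{E_1}(u_n)=\pi_{E_1}(w_n)\in H_1$ and $\pi_{E_2}(u_n)=\pi_{E_2}(z)\in H_2$, so $u_n\in U\cap B_n\cap X\subset U\cap U_n$. Hence $U$ meets infinitely many $U_n$, as required.

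The main obstacle is the coordinate bookkeeping in the converse direction: a generic neighborhood of $z$ in $X$ can involve coordinates outside $N$, so one cannot simply transport the cluster-point argument in $I^N$ back to $X$. The key is the splitting $E=E_1\sqcup E_2$, which separates the coordinates already controlled by $y$ from those that must be handled by copying $z$, together with a second application of $\omega$-density to the enlarged countable set $N\cup E_2$ to realize both constraints simultaneously inside $X$.
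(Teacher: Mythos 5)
Your proposal is correct. Note first that the paper does not prove this lemma at all: it is quoted verbatim from Shakhmatov's paper (Proposition 1 of \cite{shakh}), so there is no in-paper argument to compare against; your write-up is in fact a complete, self-contained proof of the cited result, and it follows the standard route. The forward direction is exactly the usual one: $\pi_N[X]$ is a pseudocompact dense subspace of the metrizable compactum $I^N$, hence compact, hence closed and dense, hence all of $I^N$. For the converse, your coordinate bookkeeping is sound: since $F_n\subset N$ and $E_2\cap N=\emptyset$, the point of $I^{N\cup E_2}$ agreeing with $y_n$ on $N$ and with $\pi_{E_2}(z)$ on $E_2$ is well defined, and $\omega$-density applied to $N\cup E_2$ produces the required $u_n\in B_n\cap X\cap\pi_E^{\leftarrow}[H_1\times H_2]\subset U_n\cap U$. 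One small simplification you could make: this paper \emph{defines} pseudocompactness as ``every locally finite collection of open sets is finite,'' i.e.\ as feeble compactness, so your appeal to the equivalence of pseudocompactness and feeble compactness for Tychonoff spaces is not needed here --- your cluster-point argument verifies the paper's definition directly (a sequence of nonempty open sets with a cluster point cannot be locally finite when infinite). The only loose ends are trivial degenerate cases you should flag in passing: $E_1$ or $E_2$ may be empty (then $H_1$ or $H_2$ is vacuous), and if every $F_n=\emptyset$ then every $U_n=X$ and any point clusters; none of these affects the argument.
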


\begin{lemma}\cite[Theorem 1.3]{keesling}\label{pseudosym}
Let $X$ be a Tychonoff space and $n\in\N$. Then $\F[n]{X}$ is pseudocompact if and only if $X\sp n$ is pseudocompact.
\end{lemma}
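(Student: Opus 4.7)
The plan is to exhibit the natural surjection $q\colon X^n\to\F[n]{X}$ given by $q(x_1,\ldots,x_n)=\{x_1,\ldots,x_n\}$ and verify that $q$ is a perfect map. Once this is done, the direction ``$X^n$ pseudocompact $\Rightarrow\F[n]{X}$ pseudocompact'' is immediate because pseudocompactness is preserved by continuous images, while the converse follows from the standard fact that pseudocompactness is an inverse invariant of perfect maps (see, e.g., Engelking's \emph{General Topology}).

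Continuity of $q$ is a direct verification on the Vietoris basis: the preimage of $\la U_0,\ldots,U_r\ra\cap\F[n]{X}$ equals
\[
\Bigl(\bigcup_{i=0}^r U_i\Bigr)^n\cap\bigcap_{i=0}^r\bigcup_{j=1}^n\pi_j^{\leftarrow}[U_i],
\]
which is open in $X^n$. Surjectivity is obvious (pad an $m$-element set with repetitions), and each fiber $q^{-1}(A)$ is finite (of size at most $n!$), so compact. The only substantive step is therefore to show that $q$ is closed.

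Fix a closed $C\subset X^n$ and $A=\{a_1,\ldots,a_m\}\in\F[n]{X}\setminus q[C]$, so $m\leq n$ and $q^{-1}(A)\cap C=\emptyset$. The set $q^{-1}(A)$ consists of the tuples $(a_{\sigma(1)},\ldots,a_{\sigma(n)})$ indexed by surjections $\sigma\colon\{1,\ldots,n\}\twoheadrightarrow\{1,\ldots,m\}$. For each such $\sigma$, pick a basic open box $O^\sigma_1\times\cdots\times O^\sigma_n$ disjoint from $C$ with $a_{\sigma(i)}\in O^\sigma_i$. Using Hausdorffness of $X$, also fix pairwise disjoint open neighborhoods $U_1,\ldots,U_m$ of $a_1,\ldots,a_m$, and set
\[
V_k=U_k\cap\bigcap\bigl\{O^\sigma_i:\sigma(i)=k\bigr\},
\]
a finite intersection since the surjections are finite in number, so each $V_k$ is an open neighborhood of $a_k$. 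Then $\la V_1,\ldots,V_m\ra\cap\F[n]{X}$ is an open neighborhood of $A$, and for any $B$ in it every tuple $(y_1,\ldots,y_n)\in q^{-1}(B)$ distributes its entries across the pairwise disjoint $V_k$ via some surjection $\sigma$ and thus lies in $O^\sigma_1\times\cdots\times O^\sigma_n\subset X^n\setminus C$. Hence $B\notin q[C]$, showing $\F[n]{X}\setminus q[C]$ is open at $A$.

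The main obstacle is the bookkeeping in the closedness argument: one must simultaneously refine neighborhoods across every surjection $\{1,\ldots,n\}\twoheadrightarrow\{1,\ldots,m\}$, and it is precisely the finiteness of this family of surjections together with the Hausdorff separation of $a_1,\ldots,a_m$ that allows a single Vietoris basic neighborhood $\la V_1,\ldots,V_m\ra$ to witness openness of the complement at $A$.
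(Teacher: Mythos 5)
Your reduction to the natural map $q\colon X^n\to\F[n]{X}$ and your verification that $q$ is perfect are correct: continuity, surjectivity, finiteness (hence compactness) of fibers, and the closedness argument via surjections and pairwise disjoint $U_1,\ldots,U_m$ all check out. (The paper itself uses the perfectness of this map, in the proof of the proposition in Section~\ref{compactisfinite}; also, a tiny slip: a fiber has size equal to the number of surjections from an $n$-set onto an $m$-set, which can exceed $n!$ --- e.g.\ $36>24$ for $n=4$, $m=3$ --- but only finiteness matters.) The forward direction is likewise fine. The genuine gap is the converse: \emph{pseudocompactness is not an inverse invariant of perfect maps}, and no such theorem is in Engelking. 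Concretely, let $\Psi=\omega\cup\mathcal{A}$ be an Isbell--Mr\'owka space over an infinite MAD family $\mathcal{A}$; then $\Psi$ is pseudocompact and $\mathcal{A}$ is an infinite closed discrete subspace. The map $f\colon\Psi\oplus\mathcal{A}\to\Psi$ which is the identity on $\Psi$ and the inclusion on $\mathcal{A}$ is continuous, onto, closed (the image of a closed set is a union of two closed subsets of $\Psi$) and has finite fibers, hence perfect; yet $\Psi\oplus\mathcal{A}$ has an infinite clopen discrete piece, so it is not pseudocompact. Pseudocompactness does lift along perfect maps that are irreducible (via the small images $f^{\#}(U)=Y-f[X-U]$, which are then nonempty) or open, but your $q$ is neither: for $n\geq 2$ it is not irreducible (remove one box $U_1\times\cdots\times U_n$ with pairwise disjoint factors; permuted tuples still map onto all of $\F[n]{X}$), and it is not open (the $q$-image of a box need not contain a Vietoris neighborhood of $A$, since nearby sets may carry two points close to a coordinate that the box lists only once).

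Note that the paper offers no proof of this lemma --- it is quoted from Keesling --- so your argument has to stand on its own, and as written the hard direction does not. It can be repaired along lines close to your bookkeeping: if $X^n$ is not pseudocompact, take an infinite locally finite family of nonempty open boxes; using Hausdorffness, refine each box so that its factors are pairwise disjoint (repetitions of an isolated point are handled trivially). For such a box, disjointness of the $n$ factors forces any $A\in\F[n]{X}$ lying in $\la U_1,\ldots,U_n\ra$ to meet each factor in exactly one point, so $q[U_1\times\cdots\times U_n]=\la U_1,\ldots,U_n\ra\cap\F[n]{X}$ \emph{is} open. Local finiteness of this image family at a given $A$ then follows by the same finite ``grid'' argument you used for closedness: choose, for every tuple whose coordinates lie in $A$, a box meeting only finitely many members of the original family, and intersect factorwise. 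This produces an infinite locally finite family of nonempty open subsets of $\F[n]{X}$, contradicting its pseudocompactness --- which is essentially the direct argument your appeal to a (false) general principle was meant to replace.
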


For each space $X$ and $p\in X$, recall the quasicomponent of $X$ at $p$ is the closed subset
$$
\q{X}{p}=\bigcap\{U\in\co{X}:p\in U\}.
$$
We can define by transfinite recursion the $\alpha$-quasicomponent of $X$ at $p$, $\q[\alpha]{X}{p}$, in the following way.

$$
\begin{array}{lcll}
\q[0]{X}{p} & = & X,&\\
\q[\alpha+1]{X}{p} & = & \q{\q[\alpha]{X}{p}}{p},& \textrm{for each ordinal } \alpha,\\
\q[\beta]{X}{p} & = & \bigcap_{\alpha<\beta}{\q[\alpha]{X}{p}},& \textrm{for each limit ordinal } \beta.
\end{array}
$$

We call $\nc{X}{p}=\min\{\alpha:\q[\alpha+1]{X}{p}=\q[\alpha]{X}{p}\}$ the non-connectivity index of $X$ at $p$. If $X$ is hereditarily disconnected and $p\in X$, then $\nc{X}{p}=\min\{\alpha:\q[\alpha]{X}{p}=\{p\}\}$. Notice that if $X$ is hereditarily disconnected and $|X|>1$, then $X$ is totally disconnected if and only if $\nc{X}{p}=1$ for every $p\in X$.

If $X$ is any space (no separation axioms required), we can define a quotient space $\qs{X}$ by shrinking each quasicomponent of $X$ to a point (that is, $\qs{X}=\{\q{X}{p}:p\in X\}$ with the quotient topology). Observe that $\qs{X}$ is a Hausdorff totally disconnected space.

Recall a space $X$ is scattered if for every nonempty $Y\subset X$, the set of isolated points of $Y$ is nonempty.

\begin{lemma}\label{scattered}
If $X$ and $Y$ are compact Hausdorff spaces, $X$ is scattered and $Y$ is a continuous image of $X$, then $Y$ is also scattered.
\end{lemma}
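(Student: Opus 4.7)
The plan is to argue by contradiction. Assume $Y$ is not scattered, pick a nonempty $Z \subseteq Y$ whose set of isolated points is empty, and set $P = \cl{Y}{Z}$. Then $P$ is a nonempty perfect closed subset of $Y$: every $p \in P$ is a limit point of $P$, as any neighborhood of $p$ meets $Z$ in a point different from $p$ (use that $Z$ itself has no isolated points when $p \in Z$, and that $p \notin Z$ otherwise). The aim is to contradict perfectness of $P$ by exhibiting an isolated point of $P$.

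The main step is to apply Zorn's Lemma to produce a closed set $K_0 \subseteq X$ minimal, with respect to inclusion, among those satisfying $f(K_0) = P$, where $f : X \to Y$ is the given continuous surjection. The family is nonempty (it contains $f^\leftarrow(P)$), and a descending chain $\{K_\alpha\}$ of such sets consists of compact sets with nonempty intersection $K_\infty$; moreover $f(K_\infty) = \bigcap_\alpha f(K_\alpha) = P$, since each $K_\alpha$ is compact and $Y$ is Hausdorff, so $f|_{K_\alpha}$ is a closed map and $f$ commutes with descending intersections of compacts. Verifying this commutation is the main technical hurdle; everything that follows is soft.

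Now $X$ scattered implies $K_0$ scattered (scatteredness passes to subspaces), so $K_0$ has an isolated point $x_0$. Set $y_0 = f(x_0)$. Then $K_0 \setminus \{x_0\}$ is closed in $K_0$, hence compact, so $f(K_0 \setminus \{x_0\})$ is closed in $Y$ and is a proper subset of $P$ by minimality of $K_0$. The open set $V = Y \setminus f(K_0 \setminus \{x_0\})$ contains $y_0$, and I claim $V \cap P = \{y_0\}$: for any $y \in V \cap P$ there is $x \in K_0$ with $f(x) = y$, and since $x \notin K_0 \setminus \{x_0\}$ we get $x = x_0$ and $y = y_0$. Hence $y_0$ is isolated in $P$, contradicting that $P$ is perfect.
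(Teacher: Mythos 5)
Your proof is correct and takes essentially the same route as the paper's: a Zorn's-lemma-minimal closed set $T\subset X$ with $f[T]=P$ for a closed crowded $P\subset Y$, an isolated point of the scattered compact $T$, and a contradiction with minimality. The only cosmetic difference is the endgame --- you exhibit $f(x_0)$ directly as an isolated point of $P$ via the open set $Y-f[T-\{x_0\}]$, while the paper notes that the compact set $f[T-\{t\}]$ is closed and contains the dense subset $P-\{f(t)\}$, hence all of $P$; these are the same contradiction phrased two ways (and you additionally supply the chain-intersection verification for Zorn's lemma that the paper leaves implicit).
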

\begin{proof}
Let $f:X\to Y$ be continuous and onto. Assume $K\subset Y$ is nonempty and does not have isolated points. By taking closure, we may assume that $K$ is closed. Using the Kuratowski-Zorn lemma, we can find a closed subset $T\subset X$ that is minimal with the property that $f[T]=K$. Since $X$ is scattered, there exists an isolated point $t\in T$ of $T$. Notice that $K-\{f(t)\}\subset f[T-\{t\}]$. Also, since $K$ has no isolated points, $K-\{f(t)\}$ is dense in $K$. But $T-\{t\}$ is compact so it follows that $K\subset f[T-\{t\}]$. This contradicts the minimality of $T$ so such $K$ cannot exist.
\end{proof}

\section{$P$-points in symmetric products}\label{ppoints}

In this section we show how to detect $P$-points in symmetric products.

\begin{lemma}\label{disjointbasis}
Let $X$ be a Hausdorff space, $n<\omega$, $A\in \F[n+1]{X}-\F[n]{X}$ (where $\F[0]{X}=\emptyset$) and $\U$ an open set in $\CL{X}$ such that $A\in\U$. Then there exist $U_0,\ldots,U_n$ pairwise disjoint nonempty open sets such that
$$
A\in\la U_0,\ldots,U_n\ra\subset\U.
$$
\end{lemma}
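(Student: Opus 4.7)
The plan is to start from an arbitrary basic Vietoris neighborhood of $A$ contained in $\U$ and refine it so that the pieces become pairwise disjoint while still carving up $A$ correctly. Write $A=\{a_0,\ldots,a_n\}$ with the $a_i$ distinct (since $A\in\F[n+1]{X}-\F[n]{X}$ forces $|A|=n+1$).

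First I would fix a basic open set from the Vietoris basis with $A\in\la V_0,\ldots,V_m\ra\subset\U$, and separately use the Hausdorff axiom to choose pairwise disjoint open sets $W_0,\ldots,W_n$ with $a_i\in W_i$. For each $i\in\{0,\ldots,n\}$ set
$$
I_i=\{j\leq m:a_i\in V_j\},\qquad U_i=W_i\cap\bigcap_{j\in I_i}V_j.
$$
Each $I_i$ is nonempty because $A\subset V_0\cup\cdots\cup V_m$, so $U_i$ is a finite intersection of open sets containing $a_i$, hence a nonempty open set; and the $U_i$ are pairwise disjoint because the $W_i$ are.

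Next I would check that $A\in\la U_0,\ldots,U_n\ra$: we have $a_i\in U_i$ for every $i$, which gives both $A\subset\bigcup_{i\leq n}U_i$ and $U_i\cap A\neq\emptyset$. Finally I would verify the containment $\la U_0,\ldots,U_n\ra\subset\la V_0,\ldots,V_m\ra$. Given $B\in\la U_0,\ldots,U_n\ra$, each $U_i$ lies in $V_j$ for every $j\in I_i\neq\emptyset$, so $B\subset\bigcup_i U_i\subset\bigcup_j V_j$. For the other requirement, fix $j\leq m$; since $A\in\la V_0,\ldots,V_m\ra$ there is some $a_i\in V_j$, i.e.\ $j\in I_i$, hence $U_i\subset V_j$, and then $\emptyset\neq B\cap U_i\subset B\cap V_j$. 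Combining this with $\la V_0,\ldots,V_m\ra\subset\U$ finishes the argument.

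There is no real obstacle here: the only point requiring care is the bookkeeping that matches points of $A$ to the $V_j$'s they inhabit, which is exactly what the index sets $I_i$ encode. Hausdorffness is used only to produce the pairwise disjoint $W_i$'s; everything else is a direct unfolding of the Vietoris basis definition.
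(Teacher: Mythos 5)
Your proof is correct and is essentially the paper's own argument: both constructions intersect, for each point $a_i\in A$, a Hausdorff-separating open neighborhood with all the pieces of a basic Vietoris neighborhood $\subset\U$ that contain $a_i$, which is exactly the paper's $U_k=V_k\cap\bigl(\bigcap\{W_r:x_k\in W_r\}\bigr)$. The only cosmetic differences are the order of the two choices (the paper picks the disjoint sets first and takes the basic set inside $\U\cap\la V_0,\ldots,V_n\ra$) and that the paper leaves the final containment check implicit, which you spell out via the index sets $I_i$.
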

\begin{proof}
Let $A=\{x_0,\ldots,x_n\}$. Take $V_0,\ldots,V_n$ pairwise disjoint open subsets of $X$ such that $x_k\in V_k$ for each $k\leq n$. Consider now a basic open set
$$
A\in\la W_0,\ldots,W_s\ra\subset\U\cap\la V_0\ldots, V_n\ra.
$$
For each $k\leq n$ let $U_k=V_k\cap(\bigcap\{W_r:x_k\in W_r\})$. Notice that $U_0,\ldots,U_n$ are pairwise disjoint open sets such that
$$
A\in\la U_0,\ldots,U_n\ra\subset\la W_0,\ldots,W_s\ra\subset\U,
$$
which completes the proof.
\end{proof}

\begin{propo}\label{ppoint}
Let $X$ be a Hausdorff space and $A\in \F{X}$. The following conditions are equivalent
\begin{itemize}
\item[(a)] $A$ is a $P$-point of $\F{X}$,
\item[(b)] $A$ is a $P$-point of $\F[n]{X}$ for each $n\geq|A|$,
\item[(c)] every $x\in A$ is a $P$-point of $X$.
\end{itemize}
\end{propo}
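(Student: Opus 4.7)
The plan is to establish the cyclic implication $(a) \Rightarrow (b) \Rightarrow (c) \Rightarrow (a)$. Throughout, set $m = |A|$ and enumerate $A = \{x_0, \ldots, x_{m-1}\}$.

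The implication $(a) \Rightarrow (b)$ is essentially a subspace remark: for each $n \geq m$, $A \in \F[n]{X} \subset \F{X}$, and being a $P$-point is inherited by subspaces containing the point (a countable family of relatively open neighborhoods in $\F[n]{X}$ lifts to open neighborhoods in $\F{X}$, whose intersection is already a neighborhood of $A$ by $(a)$, and the trace back to $\F[n]{X}$ is still one). For $(c) \Rightarrow (a)$, given any countable family $\{\U_k\}_{k<\omega}$ of Vietoris neighborhoods of $A$ in $\F{X}$, Lemma \ref{disjointbasis} provides, for each $k$, pairwise disjoint open sets $U_0^k, \ldots, U_{m-1}^k$ with $x_i \in U_i^k$ and $\la U_0^k, \ldots, U_{m-1}^k \ra \subset \U_k$. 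Since each $x_i$ is a $P$-point of $X$, $\bigcap_k U_i^k$ is a neighborhood of $x_i$; I shrink it to an open $V_i \ni x_i$ and, by intersecting with initial disjoint opens, arrange the $V_i$ to be pairwise disjoint. Then $\la V_0, \ldots, V_{m-1}\ra$ is a Vietoris neighborhood of $A$ contained in every $\U_k$.

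The real work is in $(b) \Rightarrow (c)$. Fix $x = x_0$ and a countable family $\{G_k\}_{k<\omega}$ of open neighborhoods of $x$ in $X$; I must produce a neighborhood of $x$ inside $\bigcap_k G_k$. I choose pairwise disjoint open $V_i \ni x_i$ and set
$$
\U_k = \la G_k \cap V_0, V_1, \ldots, V_{m-1}\ra \cap \F[m]{X},
$$
an open neighborhood of $A$ in $\F[m]{X}$. Applying $(b)$ with $n = m$, the $G_\delta$ set $\bigcap_k \U_k$ contains $A$ in its $\F[m]{X}$-interior, so Lemma \ref{disjointbasis} yields pairwise disjoint open sets $W_i \subset V_i$ with $x_i \in W_i$ and $\la W_0, \ldots, W_{m-1}\ra \cap \F[m]{X} \subset \bigcap_k \U_k$. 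The punchline is that for any $y \in W_0$, the finite set $\{y, x_1, \ldots, x_{m-1}\}$ belongs to this intersection, and since $y \in V_0$ while $x_i \in V_i$ for $i \geq 1$ and the $V_i$ are pairwise disjoint, the only way this point can fit into $\la G_k \cap V_0, V_1, \ldots, V_{m-1}\ra$ is for $y$ itself to lie in $G_k$. Hence $W_0 \subset \bigcap_k G_k$, so $x$ is a $P$-point of $X$.

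The main obstacle I anticipate is designing the $G_\delta$ test set for $(b) \Rightarrow (c)$ so that a one-parameter witness $\{y, x_1, \ldots, x_{m-1}\}$ distills precisely the condition $y \in G_k$: the asymmetric role given to the first open coordinate, combined with the disjointness of the $V_i$, is what makes the membership constraint in $\U_k$ collapse cleanly to $y \in G_k$. The other two implications are essentially bookkeeping around Lemma \ref{disjointbasis} and the subspace behavior of the $P$-point property.
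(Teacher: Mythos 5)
Your proof is correct and follows essentially the same route as the paper's: the same cycle $(a)\Rightarrow(b)\Rightarrow(c)\Rightarrow(a)$, with $(a)\Rightarrow(b)$ by heredity of $P$-points to subspaces, $(c)\Rightarrow(a)$ by applying Lemma \ref{disjointbasis} to each $\U_k$ and shrinking via the $P$-point property, and $(b)\Rightarrow(c)$ via the same test sets $\la G_k\cap V_0,V_1,\ldots,V_{m-1}\ra$ in the minimal symmetric product and the same one-parameter witness $\{y,x_1,\ldots,x_{m-1}\}$. Your claim that the lemma yields $W_i\subset V_i$ is a harmless adjustment (intersect its output with the $V_i$, or argue as the paper does via the first-coordinate hitting condition), matching the paper's ``we may assume $x_j\in V_j$'' step.
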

\begin{proof}
Let $A=\{x_0,\ldots,x_m\}$. The implication $(a)\Rightarrow(b)$ is clear because the property of being a $P$-point is hereditary to subspaces. Assume $A$ is a $P$-point of $\F[m+1]{X}$. Let $\{U_i:i<\omega\}$ be a collection of open subsets of $X$ such that $x_0\in\bigcap_{i<\omega}{U_i}$. Take $W_0,\ldots,W_m$ pairwise disjoint open subsets of $X$ such that $x_i\in W_i$ for $j\leq m$. For each $i<\omega$, define
$$
\U_i=\la U_i\cap W_0,W_1,\ldots,W_m\ra.
$$
Since $A$ is a $P$-point in $\F[m+1]{X}$, by Lemma \ref{disjointbasis}, there is a collection $V_0,\ldots, V_m$ consisting of pairwise disjoint open subsets of $X$ such that
$$
A\in\la V_0,\ldots,V_m\ra\subset\bigcap_{i<\omega}{\U_i}.
$$ 

We may assume $x_j\in V_j$ for each $j\leq m$. We now prove $V_0\subset\bigcap_{i<\omega}{U_i}$. Take $y\in V_0$ and consider the element $B=\{y,x_1,\ldots,x_m\}\in\la V_0,\ldots, V_m\ra$. Since $B\in\U_i$ for each $i<\omega$, we get $y\in\bigcap_{i<\omega}{U_i}$. This proves that $x_0$ is a $P$-point of $X$ and by similar arguments, each point of $A$ is a $P$-point of $X$. This proves $(b)\Rightarrow(c)$.

Now, let $\{\U_i:i<\omega\}$ be a collection of open subsets of $\F{X}$ that cointain $A$ and assume each point of $A$ is a $P$-point of $X$. Using Lemma \ref{disjointbasis}, for each $i<\omega$ one may define a collection $U(0,i),\ldots, U(m,i)$ consisting of pairwise disjoint open subsets of $X$  such that for each $j\leq m$, $x_j\in U(j,i)$ and $\la U(0,i),\ldots,U(m,i)\ra\subset \U_i$. Each point of $A$ is a $P$-point so we may take, for each $j\leq m$, an open subset $U_j$ of $X$ such that $x\in U_j\subset\bigcap_{i<\omega}{U(j,i)}$. Thus, 
$$
A\in\la U_0,\ldots, U_m\ra\subset\bigcap_{i<\omega}{\U_i},
$$
which proves $(c)\Rightarrow(a)$.
\end{proof}

\begin{ex}
We construct a homogeneous $P$-space with no isolated points using hyperspaces. Let $X=\{\alpha+1:\alpha<\omega_1\}\cup\{0,\omega_1\}$ as a subspace of the LOTS $\omega_1+1$. So $X$ is a $P$-space in which all its points except for $\omega_1$ are isolated. Let 
$$
Y=\{A\in \F{X}:\omega_1\in A\},
$$
which is a $P$-space (Proposition \ref{ppoint}). Let $A\in Y$ and let $\la U_0,\ldots,U_k\ra$ be a basic open neighborhood of $A$. We may assume that $U_0,\ldots,U_k$ are pairwise disjoint (Lemma \ref{disjointbasis}) and $\omega_1\in U_0$. Let $\alpha\in U_0-\{\omega_1\}$, then $A\neq A\cup\{\alpha\}\in\la U_0,\ldots, U_n\ra$. Thus, $Y$ has no isolated points.

To prove the homogeneity of $Y$ it is sufficient to prove the following:
\begin{itemize}
\item[(1)] if $A,B\in Y$ are such that $|A|=|B|$, then there exists a homeomorphism $H:Y\to Y$ such that $H(A)=B$,
\item[(2)] for every $n\in\N$, there are $A,B\in Y$ such that $|A|+1=|B|=n+1$ and a homeomorphism $H:Y\to Y$ such that $H(A)=B$.
\end{itemize}

For (1), let $h:X\to X$ be a bijection such that $h[A]=B$ and $h(\omega_1)=\omega_1$. Define $H(P)=h[P]$ for every $P\in Y$. 

For (2), let $H:Y\to Y$ be defined by
$$
H(P)=\left\{\begin{array}{cc}
P-\{0\},&\textrm{ if } 0\in P,\\
P\cup\{0\},&\textrm{ if } 0\notin P.
\end{array}\right.
$$

Then $H$ is a homeomorphism such that for each $A\in Y$ with $0\notin P$, $|H(A)|=|A|+1$. It follows that $Y$ is homogeneous.

\raggedleft{\qedsymbol}
\end{ex}

\section{Hyperspaces that are $F$-spaces}\label{fspaces}

An \emph{$\mathit{F}$-space} is a Tychonoff space in which every cozero set is $C\sp\ast$-embedded. See problem 6L in \cite{porterwoods} for properties of $F$-spaces. All Tychonoff $P$-spaces are $F$-spaces but there are connected $F$-spaces (for example, $\beta{[0,1)}-[0,1)$ by \cite[6L(2)]{porterwoods} and \cite[6AA(2)]{porterwoods}). We may also consider $F\sp\prime$-spaces, that is, Tychonoff spaces in which each pair of disjoint cozero sets have disjoint closures (see Definition 8.12 in \cite{gil-hen}). Notice every $F$-space is an $F\sp\prime$-space. The main result of this section, Theorem \ref{hypfspace}, says that a hyperspace can only be an $F\sp\prime$-space when it is a $P$-space and thus disconnected.

\begin{fact}\label{fact1}
If $X$ is an infinite Hausdorff space, then $\CL{X}$ contains a convergent sequence.
\end{fact}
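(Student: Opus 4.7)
The plan is to exhibit an explicit non-trivial convergent sequence in $\CL{X}$ built from a countable set of distinct points of $X$. The only property of $X$ we will need beyond Hausdorffness is that finite sets are closed (since $X$ is $T_1$), and this is where the infinitude of $X$ enters (through the existence of countably many distinct points).

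First I would fix a countable set of distinct points $\{x_n:n<\omega\}\subset X$, which exists because $X$ is infinite. For each $n<\omega$ set $F_n=\{x_0,\ldots,x_n\}$; each $F_n$ is closed in $X$, hence $F_n\in\CL{X}$. As the candidate limit, I would take $F=\mathrm{cl}_X(\{x_n:n<\omega\})\in\CL{X}$. Notice that $F$ is infinite while every $F_n$ is finite, so $F_n\neq F$ for all $n$; in particular the sequence is non-eventually-constant.

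Next I would verify that $F_n\to F$ in the Vietoris topology by working with a basic neighborhood $\la U_0,\ldots,U_k\ra$ of $F$. There are two inclusion/intersection conditions to check. The ``$+$''-part is immediate: since $F\subset U_0\cup\cdots\cup U_k$ and every $x_n$ belongs to $F$, each $F_n$ is contained in $U_0\cup\cdots\cup U_k$. For the ``$-$''-part, fix $i\le k$ and pick any $y\in F\cap U_i$; either $y=x_j$ for some $j$, in which case $x_j\in F_n\cap U_i$ for every $n\ge j$, or else $y$ lies in $\mathrm{cl}_X(\{x_n:n<\omega\})\setminus\{x_n:n<\omega\}$, in which case $U_i$ meets the set $\{x_n:n<\omega\}$ (since $U_i$ is an open neighborhood of $y$), giving again some $x_j\in U_i$ and hence $x_j\in F_n\cap U_i$ for every $n\ge j$. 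Taking the maximum over $i\le k$ of these finitely many thresholds gives an $N$ with $F_n\in\la U_0,\ldots,U_k\ra$ for all $n\ge N$.

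There is no real obstacle here; the only mild subtlety is the second case of the ``$-$''-check, where one has to remember that the limit points of $\{x_n:n<\omega\}$ in $F$ are approximated by members of the enumeration itself, so that each $U_i$ meeting $F$ is actually met by some $F_n$. Putting the two Vietoris conditions together yields $F_n\to F$, and since $F_n\neq F$ for every $n$, this is a genuine convergent sequence in $\CL{X}$.
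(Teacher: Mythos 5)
Your proof is correct and is essentially the same as the paper's: the paper also takes a countably infinite set $N=\{x_n:n<\omega\}$, forms the initial segments $A_m=\{x_0,\ldots,x_m\}$, and observes that they converge to $\cl{N}$ in the Vietoris topology. The only difference is that you spell out the verification of Vietoris convergence (both the ``$+$'' and ``$-$'' conditions), which the paper leaves implicit; your check is accurate.
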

\begin{proof}
Let $N=\{x_n:n<\omega\}$ be an countable infinite subspace of $X$. If $A_m=\{x_n:n\leq m\}$ for $m<\omega$, then $\{A_m:m<\omega\}$ is a sequence that converges to $\cl{N}$.
\end{proof}

\begin{fact}\label{fact2}
If $X$ is an $F\sp\prime$-space, then $X$ does not contain convergent sequences
\end{fact}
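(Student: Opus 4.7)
The plan is to argue by contradiction. Suppose that some sequence $(x_n)_{n<\omega}$ in $X$ converges to a point $x\in X$ without being eventually constant. Passing to a subsequence, I may assume that the $x_n$ are pairwise distinct and all different from $x$. The goal is to produce two disjoint cozero sets of $X$ whose closures both contain $x$, contradicting the $F'$-space hypothesis. The natural idea is to split the sequence into the even-indexed and odd-indexed subsequences and enclose each of them in a cozero set.

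First I would check that for every $n$ the set $F_n=\{x\}\cup\{x_m:m\neq n\}$ is closed in $X$. Indeed, if $y\notin F_n$ then the Hausdorff property gives a neighborhood of $y$ avoiding $x$, which (by the convergence $x_m\to x$) contains only finitely many $x_m$; a further Hausdorff refinement then avoids those remaining $x_m$ with $m\neq n$, and also avoids $x_n$ when $y=x_n$.

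Next, using that Tychonoff spaces are regular, I would build by recursion pairwise disjoint open sets $W_n$ with $x_n\in W_n$ and $\overline{W_n}\cap F_n=\emptyset$. At stage $n$, since $F_n$ is closed and $x_n\notin F_n$, regularity yields an open set $V$ with $x_n\in V\subseteq\overline{V}\subseteq X\setminus F_n$; setting $W_n=V\setminus\bigcup_{i<n}\overline{W_i}$ works because the invariant $\overline{W_i}\cap F_i=\emptyset$ forces $x_n\notin\overline{W_i}$ for each $i<n$. Complete regularity then lets me shrink each $W_n$ to a cozero set $C_n$ of $X$ with $x_n\in C_n\subseteq W_n$, via a continuous function $X\to[0,1]$ that equals $1$ at $x_n$ and vanishes outside $W_n$.

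Finally, set $U=\bigcup_{n<\omega}C_{2n}$ and $W=\bigcup_{n<\omega}C_{2n+1}$. Both are cozero sets of $X$ (countable unions of cozero sets) and they are disjoint because the $C_n$ are pairwise disjoint. Since $x_n\to x$, every open neighborhood of $x$ captures all but finitely many $x_n$ and therefore meets both $U$ and $W$; hence $x\in\overline{U}\cap\overline{W}$, contradicting the $F'$-space property. The main obstacle is the recursive construction of the $W_n$: one must set up the invariant carefully so that the ``forbidden'' closed set at stage $n$ genuinely excludes $x_n$, and this is precisely what the Hausdorff-closedness of $F_n$ guarantees.
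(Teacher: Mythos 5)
Your proof is correct and follows essentially the same route as the paper: split the (faithfully indexed) sequence into its even- and odd-indexed parts, enclose them in disjoint cozero sets, and observe that the limit lies in both closures, contradicting the $F'$-property. The only difference is that the paper simply asserts the existence of the disjoint cozero sets $U$ and $V$, whereas you carry out the recursive construction (disjoint open $W_n$ with $\overline{W_n}\cap F_n=\emptyset$, shrunk to cozero $C_n$) that justifies it.
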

\begin{proof}
If $N=\{x_n:n<\omega\}$ is a faithfully indexed sequence that converges to $x_0$, let $U, V$ be disjoint cozero sets of $X$ such that $\{x_{2n}:n\in\N\}\subset U$ and $\{x_{2n-1}:n\in\N\}\subset V$. Then $x_0\in\cl{U}\cap\cl{V}$.
\end{proof}

Thus, $\CL{X}$ is not an $F\sp\prime$-space unless $X$ is finite. So it is left to know when $\K{X}$, $\F{X}$ and the symmetric products can be $F\sp\prime$-spaces. 

Along with $F$-spaces and $F\sp\prime$-spaces, we may consider other classes of spaces between discrete spaces and $F$-spaces. A space is \emph{extremally disconnected} if every open set has open closure. A \emph{basically disconnected} space is a space in which every cozero set has open closure. So we may consider discrete spaces, $P$-spaces, extremally disconnected spaces (ED), basically disconnected spaces (BD), $F$-spaces and $F\sp\prime$-spaces. The diagram below (taken from \cite{gil-hen}) shows by an arrow which of these properties implies another. 

$$
\xymatrix{
& \mathbf{ED} \ar[dr] & & \mathbf{F}\ar[r]& \mathbf{F\sp\prime}\\
\textbf{discrete} \ar[ur]\ar[dr] & & \mathbf{BD} \ar[ur]\ar[dr] & & \\
& \mathbf{P} \ar[ur] & & \mathbf{0}\textbf{-dim} &
}
$$

Theorem \ref{hypfspace} implies that $P$-spaces, BD spaces, $F$-spaces and $F\sp\prime$-spaces coincide for the hyperspaces $\K{X}$, $\F{X}$ and the symmetric products in the realm of Tychonoff spaces. Before heading on to prove this, let us show that the hyperspaces we are considering are extremally disconnected if and only if they are discrete, even in the realm of Hausdorff spaces.

\begin{propo}
Let $X$ be a Hausdorff space and $\F[2]{X}\subset\h\subset \K{X}$. Then $\h$ is extremally disconnected if and only if $X$ is discrete.
\end{propo}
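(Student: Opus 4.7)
The plan is to prove the two implications separately, handling the nontrivial direction by contrapositive. For $(\Leftarrow)$, if $X$ is discrete then Theorem \ref{michael2} gives that $\K{X}$ is discrete; since $\h\subset\K{X}$, the hyperspace $\h$ is itself discrete and hence extremally disconnected.

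For $(\Rightarrow)$ I will argue by contrapositive: assuming $X$ has a non-isolated point $p$, I will exhibit an open subset of $\h$ whose closure is not open. Set $A = X\setminus\{p\}$; since $X$ is $T_1$ and $p$ is non-isolated, $A$ is open and $p\in\overline{A}$. Then consider
$$
S = \{\{y\} : y\in A\} \subset \F[1]{X} \subset \F[2]{X}\subset\h.
$$
The core computation is to identify $\overline{S}^{\h}$. The key claim is that no $B\in\h$ with $|B|\geq 2$ lies in $\overline{S}^{\h}$: given distinct $b_1,b_2\in B$, Hausdorffness supplies disjoint open $V_1\ni b_1$, $V_2\ni b_2$ in $X$, and then the basic Vietoris neighborhood $\la V_1,V_2,X\ra$ of $B$ is disjoint from $S$, because a singleton $\{y\}$ cannot simultaneously meet the two disjoint sets $V_1$ and $V_2$. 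The remaining case -- that $\{a\}\in\overline{S}^{\h}$ if and only if $a\in\overline{A}$ -- is a direct computation with the neighborhood base $\la W\ra$ at singletons. Together, these give $\overline{S}^{\h}=\{\{a\}:a\in\overline{A}\}$.

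With this description in hand, the conclusion follows immediately. We have $\{p\}\in\overline{S}^{\h}$ because $p\in\overline{A}$, yet every basic Vietoris neighborhood $\la W\ra$ of $\{p\}$ (with $W$ open, $p\in W$) contains $\{p,y\}$ for some $y\in W\setminus\{p\}$ (nonempty by non-isolation); since $\{p,y\}\in\F[2]{X}\subset\h$ has size $2$, it does not belong to $\overline{S}^{\h}$. Hence $\{p\}$ is not an interior point of $\overline{S}^{\h}$, so $\overline{S}^{\h}$ is not open and $\h$ is not extremally disconnected.

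The step I anticipate as the main delicacy is the exclusion of sets of size at least two from $\overline{S}^{\h}$; it is here that both the Hausdorff hypothesis on $X$ and the special shape of $S$ (only singletons) are used. It is also essential that $\F[2]{X}\subset\h$: this is precisely the hypothesis that guarantees the availability of the two-element witness $\{p,y\}$ used to break openness of $\overline{S}^{\h}$ in the last step of the argument.
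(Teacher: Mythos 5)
There is a genuine gap: your witness set $S=\{\{y\}:y\in A\}$ is not open in $\h$, so the fact that $\cl[\h]{S}$ fails to be open says nothing about extremal disconnectedness, which only constrains closures of \emph{open} sets. Indeed, any basic Vietoris neighborhood of a singleton $\{y\}$ in $\h$ has the form $\la U_0,\ldots,U_n\ra\cap\h$ with $y\in U_0\cap\cdots\cap U_n$, and since $\F[2]{X}\subset\h$ it contains the two-point sets $\{y,z\}$ for every $z\in U_0\cap\cdots\cap U_n$ with $z\neq y$; hence $\{y\}$ is an interior point of $S$ only if $y$ is isolated in $X$. So $S$ is open in $\h$ precisely when every point of $A=X-\{p\}$ is isolated, and your argument is valid only in the special case where $p$ is the \emph{unique} non-isolated point of $X$; for a crowded space such as $X=\Q$, your $S$ has empty interior. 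Note the irony: the very mechanism you invoke at the end to show $\{p\}$ is not interior to $\cl[\h]{S}$ (two-point members of $\F[2]{X}$ inside every neighborhood of a singleton) is exactly what destroys the openness of $S$ at every non-isolated point of $A$. Your computation $\cl[\h]{S}=\{\{a\}:a\in\cl{A}\}$ is correct, but it is irrelevant without openness of $S$.

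The paper sidesteps this difficulty by testing a different consequence of extremal disconnectedness: in an extremally disconnected space, disjoint open sets have disjoint closures. Given a non-isolated $p\in X$, it uses the Kuratowski--Zorn lemma to choose a maximal family $\mathcal{M}$ of pairwise disjoint nonempty open sets $U$ with $p\notin\cl{U}$ (maximality plus Hausdorffness makes $\bigcup\mathcal{M}$ dense), and then forms the genuinely open sets $\U=\bigcup\{U^+\cap\h:U\in\mathcal{M}\}$ and $\V=\bigcup\{\la U_W,V_W\ra\cap\h:W\in\mathcal{N}\}$, where $\mathcal{N}$ is the neighborhood filter of $p$ and $U_W\neq V_W$ are members of $\mathcal{M}$ meeting $W$. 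These are disjoint open subsets of $\h$ with $\{p\}\in\cl[\h]{\U}\cap\cl[\h]{\V}$, contradicting extremal disconnectedness. If you want to salvage your outline, you must replace $S$ by open sets of this kind; a single set of singletons cannot serve as the witness when $\F[2]{X}\subset\h$.
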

\begin{proof}
Clearly, $X$ discrete implies $\h$ discrete. So assume that $X$ is not discrete, take a non-isolated point $p\in X$ and consider $\mathcal{Z}$ the set of all collections $\mathcal{G}$ such that the elements of $\mathcal{G}$ are pairwise disjoint nonempty open subsets of $X$ and if $U\in\mathcal{G}$, then $p\notin\cl{U}$. By the Kuratowski-Zorn Lemma, we can consider a $\subset$-maximal element $\mathcal{M}\in\mathcal{Z}$. Since $X$ is Hausdorff, $\bigcup\mathcal{M}$ is dense in $X$.

Let $\U=\bigcup\{U\sp+\cap\h:U\in\mathcal{M}\}$. Let $\mathcal{N}$ be the filter of open neighborhoods of $p$. For each $W\in\mathcal{N}$, there must be $U_W,V_W\in\mathcal{M}$ such that $U_W\neq V_W$, $W\cap U_W\neq\emptyset$ and $W\cap V_W\neq\emptyset$. Let $\V=\bigcup\{\la U_W,V_W\ra\cap\h:W\in\mathcal{N}\}$. Then, $\U$ and $\V$ are pairwise disjoint nonempty open subsets of $\h$ but $\{p\}\in\cl[\h]{\U}\cap\cl[\h]{\V}$.
\end{proof}

We now return to $F\sp\prime$-spaces. It was shown by Michael (Theorem 4.9 of \cite{michael}) that $\K{X}$ is Tychonoff if and only if $X$ is Tychonoff so we may assume that $X$ is Tychonoff for the rest of this section. Alternatively, this follows from Lemma \ref{cozerohyperspace}.

\begin{remark}\label{fspacefinite}
Let $X$ be an infinite Tychonoff space. If $\K{X}$ is an $F$-space, then $\K{X}=\F{X}$.
\end{remark}
\begin{proof}
If $Y\in \K{X}-\F{X}$, then $\CL{Y}\subset\K{X}$ contains a convergent sequence by Fact \ref{fact1}. This contradicts Fact \ref{fact2}.
\end{proof}

\begin{lemma}\cite[1W(2)]{porterwoods}\label{zeroisclopen} A Tychonoff space is a $P$-space if and only if every zero set is clopen.\end{lemma}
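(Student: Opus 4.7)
The plan is to establish the two implications separately, using the standard interplay between zero sets and $G_\delta$ sets in Tychonoff spaces.

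For the forward direction, assume $X$ is a $P$-space, and let $Z = f^{\leftarrow}(0)$ be a zero set of $X$, where $f : X \to \mathbb{R}$ is continuous. Then $Z = \bigcap_{n \in \N} f^{\leftarrow}[(-1/n,1/n)]$ is a $G_\delta$ set. Since every point of $X$ is a $P$-point, every $G_\delta$ in $X$ is open, so $Z$ is open; being a zero set it is also closed, hence clopen.

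For the converse, assume every zero set is clopen. Fix $p \in X$ and a $G_\delta$ set $G \ni p$, written as $G = \bigcap_{n \in \N} U_n$ with each $U_n$ open. Since $X$ is Tychonoff, for each $n$ we can choose a continuous function $f_n : X \to I$ with $f_n(p) = 0$ and $f_n \equiv 1$ on $X - U_n$; setting $Z_n = f_n^{\leftarrow}(0)$ gives a zero set with $p \in Z_n \subset U_n$. Using the standard fact that a countable intersection of zero sets is a zero set (for instance, $\bigcap_n Z_n$ is the zero set of $\sum_n 2^{-n} f_n$), the set $Z = \bigcap_{n \in \N} Z_n$ is itself a zero set, and $p \in Z \subset G$. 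By hypothesis $Z$ is clopen, so $p$ lies in the interior of $G$, proving that $p$ is a $P$-point; since $p$ was arbitrary, $X$ is a $P$-space.

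No serious obstacle is expected: the argument rests only on (i) the fact that zero sets are $G_\delta$, (ii) that countable intersections of zero sets are zero sets, and (iii) that in a Tychonoff space every open neighborhood of a point contains a zero-set neighborhood of that point. The mildest subtlety is remembering to use Tychonoff separation in the converse to guarantee step (iii); without it, one could not shrink the given open sets $U_n$ to zero sets around $p$.
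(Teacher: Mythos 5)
Your proof is correct: the forward direction correctly uses that a zero set is a $G_\delta$ and that in a $P$-space every $G_\delta$ is open, and the converse correctly shrinks the open sets $U_n$ to zero-set neighborhoods $Z_n$ of $p$ via Tychonoff separation and uses the uniformly convergent sum $\sum_n 2^{-n}f_n$ to see that $\bigcap_n Z_n$ is a zero set. The paper gives no proof of this lemma, citing it as \cite[1W(2)]{porterwoods}, and your argument is precisely the standard one behind that reference, so there is nothing to compare beyond noting that your write-up fills in the citation correctly.
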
 

\begin{propo}\label{fisp}
Let $X$ be a Tychonoff space and let $F_2(X)\subset\h\subset \K{X}$. If $\h$ is an $F\sp\prime$-space, then $X$ is a $P$-space.
\end{propo}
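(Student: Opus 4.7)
The plan is to prove the contrapositive: if $X$ fails to be a $P$-space, then $\h$ is not an $F\sp\prime$-space. By Lemma \ref{zeroisclopen}, failure of the $P$-space property produces a cozero subset $U\subset X$ that is not closed; I fix a continuous $f:X\to I$ with $U=f\sp\leftarrow[(0,1]]$ and a witness $p\in\cl{U}-U$, so $f(p)=0$. The singleton $\{p\}$ lies in $\F[2]{X}\subset\h$, and the goal is to exhibit two disjoint cozero subsets of $\h$ whose closures both contain $\{p\}$.

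The separating device is the continuous real-valued function
\[
\alpha(A):=\max f[A]-2\min f[A]\qquad (A\in\K{X}),
\]
obtained by composing the induced map $f\sp\ast:\K{X}\to\K{I}$ from Lemma \ref{associated} with the continuous functions $\min,\max:\K{I}\to I$ appearing in the proof of Lemma \ref{cozerohyperspace}. Setting
\[
\mathcal{A}_+=\{A\in\h:\alpha(A)>0\}\qquad\text{and}\qquad\mathcal{A}_-=\{A\in\h:\alpha(A)<0\},
\]
both are cozero subsets of $\h$ (their complements are zero sets of the continuous real-valued functions $\max(\alpha,0)$ and $\max(-\alpha,0)$ restricted to $\h$), and they are disjoint by definition.

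Since $p\in\cl{U}$, pick a net $\{y_\lambda\}\subset U$ with $y_\lambda\to p$. The doubletons $\{p,y_\lambda\}$ and the singletons $\{y_\lambda\}$ all belong to $\F[2]{X}\subset\h$. Basic Vietoris neighborhoods of $\{p\}$ reduce to sets of the form $V\sp+$ with $p\in V$ open, and $y_\lambda\in V$ eventually, so both nets converge to $\{p\}$ in $\h$. Direct computation gives
\[
\alpha(\{p,y_\lambda\})=f(y_\lambda)-0>0\qquad\text{and}\qquad\alpha(\{y_\lambda\})=f(y_\lambda)-2f(y_\lambda)=-f(y_\lambda)<0,
\]
placing the first net in $\mathcal{A}_+$ and the second in $\mathcal{A}_-$. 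Hence $\{p\}\in\cl[\h]{\mathcal{A}_+}\cap\cl[\h]{\mathcal{A}_-}$, contradicting the $F\sp\prime$ hypothesis on $\h$.

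The only delicate point is the coefficient $2$ in the definition of $\alpha$: it is exactly what forces $\{y\}$ and $\{p,y\}$ for $y\in U$ to land on opposite sides of zero, so that the singleton and doubleton approximations of $\{p\}$ can be routed into disjoint cozero sets simultaneously. The remaining ingredients—continuity of $\alpha$, cozero-ness of $\mathcal{A}_{\pm}$, and Vietoris convergence of the two nets—are routine verifications using the lemmas already established.
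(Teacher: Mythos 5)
Your argument is correct, and it is genuinely different from the paper's. The paper also proceeds by contraposition from Lemma \ref{zeroisclopen}, fixing $f:X\to I$ with $Z=f\sp\leftarrow(0)$ not clopen and $p\in Z-\inte{Z}$, but it then performs a case analysis --- whether some neighborhood of $p$ is mapped by $f$ into $\{0\}\cup\{\frac{1}{2m}:m\in\N\}$ or into $\{0\}\cup\{\frac{1}{2m-1}:m\in\N\}$ --- to manufacture a countable family of pairwise disjoint cozero ``annuli'' $U_m=f\sp\leftarrow[(\frac{1}{2m+2},\frac{1}{2m})]$ with $p\in\cl{\bigcup\{U_m:m\in\N\}}-\bigcup\{\cl{U_m}:m\in\N\}$; its separating cozero sets are $\U=\bigcup\{U_m\sp+\cap\h:m\in\N\}$ (sets contained in a single annulus) and $\V=\bigcup\{\la U_m,U_k\ra\cap\h:m\neq k\}$ (sets meeting two distinct annuli), which are cozero by Lemma \ref{cozerohyperspace}, and $\{p\}$ lies in both closures. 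You replace this entire combinatorial layer with the single continuous functional $\alpha=\max f[\,\cdot\,]-2\min f[\,\cdot\,]$ on $\K{X}$ and separate by sign, approximating $\{p\}$ by singletons $\{y_\lambda\}$ on the negative side and doubletons $\{p,y_\lambda\}$ on the positive side; the disjointness of $\{\alpha>0\}$ and $\{\alpha<0\}$ is algebraic rather than geometric, the (E)/(O) dichotomy and the interlacing family disappear, and Lemma \ref{cozerohyperspace} is bypassed in favor of its raw ingredients (Lemma \ref{associated} and the continuity of $\min,\max:\K{I}\to I$). Your route is shorter and arguably cleaner; what the paper's construction buys is that its $\U$/$\V$ pattern (``inside one piece'' versus ``meets two pieces'') does not depend on any functional calculus on the hyperspace and is exactly the pattern reused in the paper's proposition on extremal disconnectedness, where no cozero structure is available. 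The points you flag as routine do check out: basic Vietoris neighborhoods of the singleton $\{p\}$ reduce to $V\sp+$ with $p\in V$, so both nets converge to $\{p\}$; the complements of $\{\alpha>0\}$ and $\{\alpha<0\}$ in $\h$ are the zero sets of $\max(\alpha,0)$ and $\max(-\alpha,0)$; $y_\lambda\neq p$ since $f(p)=0<f(y_\lambda)$; and both nets live in $\F[2]{X}\subset\h$, which is precisely where the hypothesis $\F[2]{X}\subset\h$ enters, just as in the paper's use of $\la U_m,U_k\ra$.
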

\begin{proof}
Let us assume $X$ is not a $P$-space, by Lemma \ref{zeroisclopen}, we may assume there is a continuous function $f:X\to I$ such that $Z=f\sp\leftarrow(0)$ is not clopen. Let $p\in Z-\inte{Z}$ and consider the following two statements:
\begin{itemize}
\item[(E)] There is a neighborhood $U$ of $p$ with $f[U]\subset\{0\}\cup\{\frac{1}{2m}:m\in\N\}$.
\item[(O)] There is a neighborhood $V$ of $p$ with $f[V]\subset\{0\}\cup\{\frac{1}{2m-1}:m\in\N\}$.
\end{itemize}

Notice that since $p\notin\inte{Z}$, we cannot have (E) and (O) simultaneaously. Assume without loss of generality that (E) does not hold. For each $m\in\N$, let $U_m=f\sp\leftarrow[(\frac{1}{2m+2},\frac{1}{2m})]$. Then $\{U_m:m\in\N\}$ is a collection of pairwise disjoint cozero sets. Observe that every neighborhood of $p$ intersects some $U_m$. Also, $f\sp\leftarrow[[0,\frac{1}{2m+2})]$ is a neighborhood of $p$ that misses $U_m$. Thus,
$$
(\ast)\ p\in\cl{\bigcup{\{U_m:m\in\N\}}}-\bigcup\{\cl{U_m}:m\in\N\}.
$$

Consider the sets:
\begin{eqnarray}
\U&=&\bigcup\{U_m\sp+\cap\h:m\in\N\},\nonumber\\
\V&=&\bigcup\{\la U_m,U_k\ra\cap\h:m,k\in\N,m\neq k\},\nonumber
\end{eqnarray}
these are nonempty pairwise disjoint cozero sets by Lemma \ref{cozerohyperspace}. By $(\ast)$, it follows that $\{p\}\in\cl[\h]{\U}\cap\cl[\h]{\V}$, so $\h$ is not an $F\sp\prime$-space.
\end{proof}

This allows us to give the next result.

\begin{thm}\label{hypfspace}
Let $X$ be a Tychonoff space and $\F[2]{X}\subset\h\subset \K{X}$. Then the following are equivalent:
\begin{itemize}
\item[(a)] $X$ is a $P$-space,
\item[(b)] $\h$ is a $P$-space,
\item[(c)] $\h$ is an $F\sp\prime$-space.
\end{itemize}
\end{thm}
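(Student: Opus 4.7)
The plan is to establish the cycle $(a)\Rightarrow(b)\Rightarrow(c)\Rightarrow(a)$. Two of these implications are essentially free: $(b)\Rightarrow(c)$ because a Tychonoff $P$-space is always an $F$-space and hence an $F\sp\prime$-space, as displayed in the diagram preceding Remark \ref{fspacefinite}; and $(c)\Rightarrow(a)$ is exactly Proposition \ref{fisp}, whose hypothesis $\F[2]{X}\subset\h\subset\K{X}$ coincides with ours.

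The substance lies in $(a)\Rightarrow(b)$. First I would reduce to the symmetric products by showing that when $X$ is a $P$-space every compact subset of $X$ is finite, so that $\K{X}=\F{X}$ and in particular $\h\subset\F{X}$. To verify this, let $K$ be an infinite compact subset of $X$, pick a faithfully indexed countable set $\{x_n:n<\omega\}\subset K$, and choose an accumulation point $p\in K$ of this set, which we may take to be distinct from every $x_n$. For each $n$ use Hausdorffness to select an open neighborhood $U_n$ of $p$ with $x_n\notin U_n$; since $X$ is a $P$-space, $\bigcap_{n<\omega}U_n$ is a neighborhood of $p$ that misses $\{x_n:n<\omega\}$, contradicting the choice of $p$.

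Once $\h\subset\F{X}$ is in hand, the conclusion follows from Proposition \ref{ppoint}. Because $X$ is a $P$-space, every point of $X$ is a $P$-point of $X$, so by the implication $(c)\Rightarrow(a)$ of that proposition each $A\in\F{X}$ is a $P$-point of $\F{X}$. The property of being a $P$-point is hereditary to subspaces, so every $A\in\h$ is a $P$-point of $\h$, establishing $(b)$. The only delicate step in this outline is the auxiliary fact that compact subsets of Hausdorff $P$-spaces are finite; this is alluded to in the introduction via Remark \ref{fspacefinite}, but since invoking that remark here would be circular it seems cleaner to give the short direct argument sketched above.
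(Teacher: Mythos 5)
Your proposal is correct and follows essentially the same route as the paper's proof: $(b)\Rightarrow(c)$ is dismissed as well-known, $(c)\Rightarrow(a)$ is exactly Proposition \ref{fisp}, and $(a)\Rightarrow(b)$ is obtained by combining Proposition \ref{ppoint} with the identity $\K{X}=\F{X}$ and the heredity of $P$-points to subspaces. The only local difference is that the paper justifies $\K{X}=\F{X}$ by citing Remark \ref{fspacefinite}, whose stated hypothesis (that $\K{X}$ is an $F$-space) does not literally apply at that point in the argument, so your short direct proof that compact subsets of a Hausdorff $P$-space are finite is a sound, and arguably cleaner, repair of that step.
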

\begin{proof}
First, assume (a). By Lemma \ref{ppoint}, $\F{X}$ is a $P$-space and by Remark \ref{fspacefinite}, $\K{X}=\F{X}$ so $\h\subset\K{X}$ is a $P$-space. So (b) holds. That (b) implies (c) is well-known and (c) implies (a) by Proposition \ref{fisp}.
\end{proof}

\section{Some spaces such that $\K{X}=\F{X}$}\label{compactisfinite}

We can generalize the techniques for $F\sp\prime$-spaces to another class of spaces $X$ such that $\K{X}=\F{X}$. We consider the case of weak $P$-spaces.

We call a point $p$ in a space $X$ a weak $P$-point of $X$ if for every countable set $N\subset X-\{p\}$ we have $p\notin\cl{N}$. A weak $P$-space is a space $X$ in which all its points are weak $P$-points of $X$.

\begin{fact}\label{fact3}
If $X$ is a weak $P$-space, then $\K{X}=\F{X}$.
\end{fact}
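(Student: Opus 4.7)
The plan is to prove the contrapositive in the following form: if $K \in \K{X}$ is infinite, then some point of $K$ fails to be a weak $P$-point of $X$. So suppose $K \subset X$ is compact and infinite, and I want to produce a point $p \in X$ and a countable set $N \subset X - \{p\}$ with $p \in \cl{N}$.

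Choose any countably infinite subset $M \subset K$. Since $K$ is a compact $T_1$ space (recall $\K{X}$ is only defined when $X$ is $T_1$), compactness implies limit point compactness, so $M$ has an accumulation point $p \in K$. This gives at once that every neighborhood of $p$ in $X$ meets $M$ in a point different from $p$, so setting $N = M \setminus \{p\}$ we have a countable subset of $X - \{p\}$ with $p \in \cl{N}$. Hence $p$ is not a weak $P$-point of $X$, contradicting the hypothesis.

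The only slightly delicate point is the fact that an infinite compact $T_1$ space has a countable subset with an accumulation point, i.e.\ that compactness implies limit point compactness in the $T_1$ setting; this is a standard consequence of the fact that in a $T_1$ space, if $M$ had no accumulation point in $K$, each $x \in K$ would have a neighborhood meeting $M$ in at most the point $x$, and the resulting open cover of $K$ would admit no finite subcover covering the infinite set $M$. Everything else is immediate from the definition of weak $P$-point, and the inclusion $\F{X} \subset \K{X}$ is trivial, so together these yield $\K{X} = \F{X}$.
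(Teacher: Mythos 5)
Your proof is correct and takes essentially the same route as the paper's: both come down to the fact that an infinite compact set contains a countably infinite subset with an accumulation point, which immediately contradicts the weak $P$-point property at that point. The paper packages this by first observing that countable subsets of a weak $P$-space are closed and discrete, whereas you invoke limit point compactness directly (and your accumulation point $p$ may lie in $M$ itself, which you handle by passing to $N = M \setminus \{p\}$); the difference is purely cosmetic.
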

\begin{proof}
If $X$ is a weak $P$-space, then every countable subset of $X$ is closed and discrete. If $K\subset X$ is compact and infinite, it contains a countable infinite discrete subset $N\subset K$ and if $x\in\cl{N}-N$, then $N\cup\{x\}$ is countable but not discrete.
\end{proof}

\begin{remark}\label{weakpconnected}
The property of being a weak $P$-space does not imply disconnectedness. Shakhmatov gave in \cite{shakh} an example of a connected, pseudocompact, weak $P$-space in which all its countable subsets are $C\sp\ast$-embedded.
\end{remark}

We have the following results for weak $P$-spaces, analogous to those of $P$-spaces.

\begin{propo}\label{weakP1}
Let $X$ be a Hausdorff space and $A\in \F{X}$. The following conditions are equivalent
\begin{itemize}
\item[(a)] $A$ is a weak $P$-point in $\F{X}$,
\item[(b)] $A$ is a weak $P$-point in $\F[n]{X}$ for each $n\geq|A|$,
\item[(c)] every $x\in A$ is a weak $P$-point of $X$.
\end{itemize}
\end{propo}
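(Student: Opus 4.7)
The plan is to mimic the proof of Proposition \ref{ppoint}, replacing ``$P$-point'' by ``weak $P$-point'' and, more importantly, replacing arguments about countable intersections of neighborhoods by arguments about countable sets whose closures avoid a given point. Write $A=\{x_0,\ldots,x_m\}$. The implication $(a)\Rightarrow(b)$ is immediate, since being a weak $P$-point is hereditary to subspaces and $\F[n]{X}\subset\F{X}$: any countable $N\subset\F[n]{X}-\{A\}$ witnessing the failure at the $\F[n]{X}$ level would also witness failure at the $\F{X}$ level.

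For $(b)\Rightarrow(c)$ I would argue by contrapositive. Suppose, say, $x_0$ fails to be a weak $P$-point of $X$, so some countable $N\subset X-\{x_0\}$ has $x_0\in\cl{N}$. Pick pairwise disjoint open neighborhoods $W_0,\ldots,W_m$ of $x_0,\ldots,x_m$; we may assume $N\subset W_0$. The countable family
$$
\mathcal{N}=\{\{y,x_1,\ldots,x_m\}:y\in N\}\subset \F[m+1]{X}-\{A\}
$$
accumulates at $A$: given any neighborhood $\U$ of $A$ in $\F[m+1]{X}$, Lemma \ref{disjointbasis} yields a basic set $\la U_0,\ldots,U_m\ra\subset\U$ with $x_j\in U_j$ and pairwise disjoint $U_j$'s, which after intersection with the $W_j$'s we may assume to satisfy $U_j\subset W_j$; since $x_0\in\cl{N\cap U_0}$, some $y\in N\cap U_0$ produces an element of $\mathcal{N}\cap\U$.

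The implication $(c)\Rightarrow(a)$ is where the work lies, and the key trick is to pass from a countable family of finite sets to a single countable set of points. Let $\mathcal{A}\subset\F{X}-\{A\}$ be countable and set $N=(\bigcup\mathcal{A})-A$, which is countable. Fix pairwise disjoint open neighborhoods $W_0,\ldots,W_m$ of the $x_j$'s. Since each $x_j$ is a weak $P$-point of $X$ and $x_j\notin N$, there is an open $U_j$ with $x_j\in U_j\subset W_j$ and $U_j\cap N=\emptyset$. Then $A\in\la U_0,\ldots,U_m\ra$, and I claim this basic neighborhood misses $\mathcal{A}$: if $B\in\mathcal{A}\cap\la U_0,\ldots,U_m\ra$, then $B\cap U_j\neq\emptyset$ forces $x_j\in B$ (the only point of $N\cup A$ inside $U_j\subset W_j$ is $x_j$), so $A\subset B$, while $B\subset\bigcup U_j$ combined with $U_j\cap N=\emptyset$ forces $B\subset A$; hence $B=A$, contradicting $A\notin\mathcal{A}$. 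The main obstacle is precisely the bookkeeping that isolates each $x_j$ from $N$ while keeping the $U_j$'s disjoint; everything else is analogous to the $P$-point case.
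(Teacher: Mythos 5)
Your proof is correct, and on $(a)\Rightarrow(b)$ and $(b)\Rightarrow(c)$ it coincides with the paper's: the paper also invokes heredity of weak $P$-points for $(a)\Rightarrow(b)$, and for $(b)\Rightarrow(c)$ uses exactly your family, $B_k=\{y_k,x_1,\ldots,x_m\}$ with $y_k$ running over a countable set clustering at $x_0$ (the paper merely asserts $A\in\cl[{\F[m+1]{X}}]{\{B_k:k<\omega\}}$, whereas you verify it via Lemma \ref{disjointbasis}; your reduction to $N\subset W_0$ is harmless since $x_0\in\cl{N\cap W_0}$). The genuine divergence is in $(c)\Rightarrow(a)$, and there your version is actually \emph{more careful} than the paper's. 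The paper chooses, for each member $B_k$ of the countable family, an index $t(k)\leq m$ with $x_{t(k)}\notin B_k$, partitions the family by this index, and shields each $x_r$ from the countable union of the corresponding $B_k$'s. That choice silently assumes every $B_k\neq A$ omits some point of $A$, which fails exactly when $A\subsetneq B_k$; such sets are perfectly legitimate members of a countable family clustering at $A$ (e.g.\ $B_k=A\cup\{y_k\}$ with $y_k\notin A$), so the published argument has an untreated case. Your pooling device avoids the per-set choice altogether: since $x_j\notin N=(\bigcup\mathcal{A})-A$ and $N$ is countable, each $x_j$ can be shielded from all of $N$ at once inside pairwise disjoint $W_j$'s, and then any $B\in\mathcal{A}\cap\la U_0,\ldots,U_m\ra$ satisfies $B\subset(N\cup A)\cap(U_0\cup\cdots\cup U_m)\subset A$ while $B\cap U_j\subset A\cap W_j=\{x_j\}$ forces $A\subset B$, so $B=A$, contradicting $A\notin\mathcal{A}$. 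In short, what your route buys is a uniform treatment of both $B\subsetneq$-type and $B\supsetneq A$-type members, repairing the gap in the paper's proof at no extra cost in machinery.
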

\begin{proof}
Let $A=\{x_0,\ldots,x_m\}$. Notice $(a)\Rightarrow(b)$ because being a weak $P$-point is hereditary to subspaces.

To prove $(b)\Rightarrow(c)$, assume $x_0$ is not a weak $P$-point of $X$. Let $D=\{y_k:k<\omega\}\subset X-\{x_0\}$ be such that $x_0\in\cl{D}$. Define $B_k=\{y_k,x_1,\ldots,x_m\}\in\F[m+1]{X}$ for each $k<\omega$. Then, $\{B_k:k<\omega\}\subset\F[m+1]{X}-\{A\}$ and $A\in\cl[{\F[m+1]{X}}]{\{B_k:k<\omega\}}$.

Now we prove $(c)\Rightarrow(a)$. Assume $(c)$ and take $\{B_k:k<\omega\}\subset\F{X}-\{A\}$. For each $k<\omega$, choose $t(k)\in\{0,\ldots,m\}$ such that $x_{t(k)}\notin B_k$. Define $E_r=\{k<\omega:t(k)=r\}$ for each $r\leq m$. So given $r\leq n$, $x_r\notin\bigcup\{B_k:k\in E_r\}$. Since $\bigcup\{B_k:k\in E_r\}$ is countable, there exists an open subset $U_r$ with $x_r\in U_r$ and $U_r\cap(\bigcup\{B_k:k\in E_r\})=\emptyset$. Finally, let $\U=\la U_0,\ldots, U_n\ra$. Then $A\in\U$ and $\U\cap\{B_k:k<\omega\}=\emptyset$. 
\end{proof}

\begin{thm}\label{weakP2}
Let $X$ be a Hausdorff space. Then the following are equivalent
\begin{itemize}
\item[(a)] $X$ is a weak $P$-space,
\item[(b)] $\K{X}$ is a weak $P$-space,
\item[(c)] $\F{X}$ is a weak $P$-space,
\item[(d)] $\F[n]{X}$ is a weak $P$-space for some $n\in\N$.
\end{itemize}
\end{thm}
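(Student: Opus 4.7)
The plan is to close the cycle $(a) \Rightarrow (b) \Rightarrow (c) \Rightarrow (d) \Rightarrow (a)$. All the substantive work has already been done in Fact \ref{fact3} and Proposition \ref{weakP1}; what remains is bookkeeping plus one observation about hereditary behavior.

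For $(a) \Rightarrow (b)$, I would first apply Fact \ref{fact3} to conclude that $\K{X} = \F{X}$, so it suffices to show $\F{X}$ is a weak $P$-space. Pick any $A \in \F{X}$; since every point of $A$ is a weak $P$-point of $X$ by assumption, Proposition \ref{weakP1} (c)$\Rightarrow$(a) gives that $A$ is a weak $P$-point of $\F{X}$.

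For $(b) \Rightarrow (c)$ and $(c) \Rightarrow (d)$, the key observation is that being a weak $P$-space is inherited by arbitrary subspaces: if $Y \subset X$, $p \in Y$, and $N \subset Y - \{p\}$ is countable, then $N \subset X - \{p\}$ so $p \notin \cl[X]{N} \supset \cl[Y]{N}$. Since $\F[n]{X} \subset \F{X} \subset \K{X}$, both implications follow; for $(c) \Rightarrow (d)$ one may simply take $n = 1$.

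For $(d) \Rightarrow (a)$, suppose $\F[n]{X}$ is a weak $P$-space for some $n \in \N$. Since $\F[1]{X} \subset \F[n]{X}$, every singleton $\{x\}$ is a weak $P$-point of $\F[n]{X}$, and as $|\{x\}| = 1 \leq n$, Proposition \ref{weakP1} (b)$\Rightarrow$(c) tells us $x$ is a weak $P$-point of $X$. Hence $X$ is a weak $P$-space. There is really no obstacle here: Proposition \ref{weakP1} already does all the heavy combinatorial lifting with basic Vietoris neighborhoods, and Fact \ref{fact3} supplies the equality $\K{X} = \F{X}$ that is needed to push the conclusion past $\F{X}$ and into $\K{X}$.
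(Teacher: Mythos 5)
Your proposal is correct and takes essentially the same route as the paper's proof: Fact \ref{fact3} combined with Proposition \ref{weakP1} for $(a)\Rightarrow(b)$, heredity of the weak $P$-property to subspaces for $(b)\Rightarrow(c)\Rightarrow(d)$, and Proposition \ref{weakP1} again for $(d)\Rightarrow(a)$. The only cosmetic difference is that you spell out the hereditary-subspace observation (which the paper dismisses with ``Clearly'') and arrange the implications as a cycle rather than using the equivalence $(d)\Leftrightarrow(a)$ directly.
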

\begin{proof}
If we assume $(a)$, by Fact \ref{fact3} we have $\K{X}=\F{X}$, which is a weak $P$-space by Proposition \ref{weakP1}. Clearly, $(b)$ implies $(c)$ and  $(c)$ implies $(d)$. Finally, $(d)$ and $(a)$ are equivalent by Proposition \ref{weakP1}.
\end{proof}

\begin{ex}
Call a Tychonoff space a \emph{Shakhmatov space} if it is a pseudocompact, connected weak $P$-space. In \cite{shakh}, Shakhmatov gave an example of a subspace $\s\subset I\sp\cont$, where $\cont=2\sp\omega$, that is a Shakhmatov space. Moreover, $\s$ is $\omega$-dense in $I\sp\cont$.

Let $\h$ be either a symmetric product of $\s$ or one of the hyperspaces $\K{\s}$ or $\F{\s}$. By Theorem \ref{weakP2}, $\h$ is also a weak $P$-space and by Theorem \ref{michael1}, it is also connected. It is interesting to ask whether $\h$ is a Shakhmatov space. Notice that $\K{X}=\F{X}$ for every Shakhmatov space $X$ (Fact \ref{fact3}).

Since $\s$ is $\omega$-dense in $I^\cont$, it is easy to see that for any cardinal $\kappa$, $\s^\kappa$ is $\omega$-dense in $(I\sp\cont)\sp\kappa=I\sp{\cont\times\kappa}$. By Lemmas \ref{omegadense} and \ref{pseudosym}, $\F[n]{\s}$ is a Shakhmatov space for every $n\in\N$. However, $\F{X}$ is pseudocompact if and only if $X$ is finite by Lemma \ref{finitenotpseudo} below. Thus, $\K{X}$ is never a Shakhmatov space.

\raggedleft{\qedsymbol}
\end{ex}

\begin{lemma}\label{finitenotpseudo}
If $X$ is an infinite Tychonoff space, $\F{X}$ is not pseudocompact.
\end{lemma}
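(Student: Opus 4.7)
The plan is to split into two cases depending on whether $X$ is pseudocompact. In the easy case that $X$ is not pseudocompact, I would argue that $\F[1]{X}$ is a closed copy of $X$ inside $\F{X}$: for any $A\in\F{X}$ with $|A|\geq 2$, Lemma \ref{disjointbasis} produces a basic neighborhood $\la U_0,U_1\ra$ with $U_0\cap U_1=\emptyset$, which contains no singleton. Hence $\F{X}$ has a non-pseudocompact closed subspace (namely $\F[1]{X}\cong X$) and so cannot be pseudocompact.

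Suppose from now on that $X$ is pseudocompact. The strategy is to construct an infinite compact metric space $K$ together with a continuous surjection $\F{X}\to\F{K}$, and then show $\F{K}$ is not pseudocompact. To build $K$: since $X$ is infinite and Hausdorff it contains an infinite discrete subspace $\{x_n:n\in\N\}$ (by a standard Zorn's lemma argument); for each $n$ pick an open neighborhood $W_n$ of $x_n$ with $W_n\cap\{x_m:m\in\N\}=\{x_n\}$, and use Tychonoff separation to obtain a continuous $g_n:X\to[0,1]$ with $g_n(x_n)=1$ and $g_n\equiv 0$ on $X\setminus W_n$. Then $g_n(x_m)=0$ for $m\neq n$, and the uniformly convergent series $f=\sum_{n\geq 1}2^{-n}g_n$ yields a continuous $f:X\to[0,1]$ with $f(x_n)=2^{-n}$, so $K:=f[X]$ is infinite. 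Since $X$ is pseudocompact, $K$ is a pseudocompact subspace of $[0,1]$ and hence compact. The induced map $f^{\ast}:\F{X}\to\F{K}$, $A\mapsto f[A]$, is continuous (Lemma \ref{associated}) and onto, so pseudocompactness would pass from $\F{X}$ to $\F{K}$; it therefore suffices to show $\F{K}$ is not pseudocompact.

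To this end, observe that each power $K^n$ is compact and $\F[n]{K}$ is its continuous image under $(y_1,\ldots,y_n)\mapsto\{y_1,\ldots,y_n\}$, so $\F[n]{K}$ is compact, and $\F{K}=\bigcup_{n\in\N}\F[n]{K}$ is $\sigma$-compact, in particular Lindel\"of and Tychonoff. Since a Lindel\"of Tychonoff space is realcompact and a realcompact pseudocompact space is compact, it is enough to verify that $\F{K}$ is not compact. The infinite compact metric space $K$ contains a nontrivial convergent sequence $y_n\to y$ with the $y_n$ distinct from $y$; the sets $A_n=\{y,y_1,\ldots,y_n\}$ lie in $\F{K}$, and in the Hausdorff metric on the compact metric hyperspace $\K{K}$ they converge to the infinite compact set $\{y\}\cup\{y_k:k\geq 1\}$, which is not in $\F{K}$. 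Thus no subsequence of $\{A_n\}$ converges in $\F{K}$, so $\F{K}$ is not sequentially compact and, being metrizable, not compact. The only mildly delicate point in the plan is the construction of $f$; the remaining steps are routine.
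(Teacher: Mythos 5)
Your Case 2 is sound, but Case 1 rests on a false principle: pseudocompactness is \emph{not} inherited by closed subspaces. Mr\'owka's $\Psi$-space is the standard counterexample: it is pseudocompact, yet the maximal almost disjoint family sits inside it as an infinite closed \emph{discrete} subspace, which is certainly not pseudocompact. So from the (correct) observation that $\F[1]{X}$ is a closed copy of $X$ in $\F{X}$ you cannot conclude that $\F{X}$ pseudocompact implies $X$ pseudocompact; closed-hereditariness holds for regular closed subsets, but $\F[1]{X}$ is nowhere dense in $\F{X}$ whenever $X$ has no isolated points, so that escape route is unavailable. The rest of your argument checks out: the construction of $f$ with $f(x_n)=2^{-n}$, the compactness of $K=f[X]$, the continuity and surjectivity of $f^\ast$, and the non-pseudocompactness of $\F{K}$ are all correct, though the last step is more baroque than necessary --- $\F{K}$ embeds in the compact metric hyperspace $\K{K}$, hence is metrizable, and a metrizable pseudocompact space is compact, so the $\sigma$-compact/Lindel\"of/realcompact detour can be skipped.

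The gap is local and admits a one-line repair in the spirit of the paper's own machinery. If $X$ is not pseudocompact, fix an infinite locally finite family $\{V_n:n<\omega\}$ of nonempty open subsets of $X$; then $\{V_n^-\cap\F{X}:n<\omega\}$ is an infinite locally finite family of nonempty open subsets of $\F{X}$: given a finite $A$, choose for each $x\in A$ a neighborhood $W_x$ meeting only finitely many $V_n$, and note that every $B\in\la W_x:x\in A\ra$ is contained in $\bigcup_{x\in A}W_x$ and so meets only those finitely many $V_n$. (Alternatively, an unbounded continuous $f:X\to\mathbb{R}$ yields the unbounded continuous map $A\mapsto\max f[A]$ on $\F{X}$, exactly as in Lemma \ref{cozerohyperspace}.) Even once repaired, your route is genuinely different from and much longer than the paper's: the paper takes any family $\{U_n:n<\omega\}$ of pairwise disjoint nonempty open subsets of $X$ and exhibits $\{\la U_0,\ldots,U_n\ra\cap\F{X}:n<\omega\}$ directly as an infinite locally finite open family --- each member of $\la U_0,\ldots,U_n\ra$ must have at least $n+1$ points spread over $U_0,\ldots,U_n$ --- with no pseudocompactness dichotomy, no auxiliary compact metric space $K$, and no transfer of pseudocompactness along $f^\ast$. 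What your approach buys is a reduction to the transparent metric fact that $\F{K}$ is a non-closed dense-in-itself subspace of $\K{K}$; what it costs is the case split whose first half was the weak point.
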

\begin{proof}
Let $\{U_n:n<\omega\}$ be a family of pairwise disjoint nonempty open subsets of $X$. If $\U_n=\la U_0,\ldots, U_n\ra$, then $\{\U_n\cap\F{X}:n<\omega\}$ is an infinite locally finite family of open nonempty subsets of $\F{X}$.
\end{proof}

For sake of completeness, we show that condition $\K{X}=\F{X}$ behaves well under the operation of taking hyperspace in the following way.

\begin{propo}
If $X$ is a Hausdorff space, then $\K{X}=\F{X}$ if and only if every compact subset of $\K{X}$ is finite (that is, $\K{\K{X}}=\F{\K{X}}$).
\end{propo}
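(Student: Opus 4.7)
The plan is to prove both directions separately, with the reverse direction being a brief observation and the forward direction requiring one genuine step.

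For the implication ``every compact subset of $\K{X}$ is finite $\Rightarrow\K{X}=\F{X}$'', I would take $A\in\K{X}$ and look at $\F[1]{A}=\{\{a\}:a\in A\}$. Since the map $\{a\}\mapsto a$ is a homeomorphism from $\F[1]{A}$ onto $A$, the set $\F[1]{A}$ is a compact subspace of $\K{X}$. By hypothesis it is finite, so $A$ is finite and $\K{X}=\F{X}$.

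For the forward direction, assume $\K{X}=\F{X}$ and let $\C\subset \K{X}$ be compact. The main step is to show that $Y=\bigcup\C$ is a compact subset of $X$. Given an open cover $\U$ of $Y$ in $X$, for each $A\in\C$ compactness of $A$ yields a finite $\U_A\subset\U$ with $A\subset \bigcup\U_A$; then $\{(\bigcup\U_A)^+\cap\K{X}:A\in\C\}$ is an open cover of $\C$ inside $\K{X}$, so by compactness of $\C$ finitely many $A_1,\ldots,A_k$ suffice, and $\U_{A_1}\cup\cdots\cup\U_{A_k}$ is a finite subcover of $Y$. Once $Y$ is compact, the assumption $\K{X}=\F{X}$ forces $Y$ to be finite, and since every element of $\C$ is a subset of the finite set $Y$, the collection $\C$ itself is finite.

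The only nontrivial point is verifying that $\bigcup\C$ is compact; this is essentially the continuity of the union map $\K{\K{X}}\to\K{X}$, and I would write it out directly as above since the paper has not previously recorded this fact. Everything else is immediate from the definitions and the homeomorphism $\F[1]{X}\cong X$.
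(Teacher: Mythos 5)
Your proof is correct, and it takes a genuinely different and more elementary route than the paper's. The key step you supply, that $\bigcup\C$ is compact in $X$ whenever $\C\subset\K{X}$ is compact, works exactly as you wrote it: each set $\left(\bigcup\U_A\right)^+\cap\K{X}$ is Vietoris-open and contains $A$ itself, so these sets cover $\C$, and a finite subcover at the level of $\K{X}$ unwinds to the finite subfamily $\U_{A_1}\cup\cdots\cup\U_{A_k}$ of $\U$ covering $\bigcup\C$; since $X$ is Hausdorff, the compact set $\bigcup\C$ is closed, hence belongs to $\K{X}=\F{X}$ if nonempty and is therefore finite, and $\C$, being a family of nonempty subsets of a finite set, is finite. (This union fact is classical and is essentially already in Michael's paper \cite{michael}, so you could alternatively cite it rather than prove it.) The paper proceeds quite differently: it slices $\C$ into the levels $\C_n=\C\cap\F[n]{X}$, proves each $\C_n$ finite via the perfect identification ${}\sp{n}X\to\F[n]{X}$ together with finiteness of compact subsets of $X$, concludes that $\C$ is a countable compact Hausdorff space and hence metrizable (using weight $\leq$ cardinality for infinite compact Hausdorff spaces), and then, assuming $\C$ infinite, extracts from a convergent sequence $A_n\to A_0$ in $\C$ a nontrivial convergent sequence in $X$, contradicting $\K{X}=\F{X}$. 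Your route avoids all of that machinery --- the perfect-map argument, the metrizability theorem, and the somewhat delicate sequence extraction --- and in addition isolates a reusable fact about the Vietoris topology; what the paper's longer argument buys is only the side observation that each cardinality level of a compact $\C$ is finite, so as a proof of this proposition yours is shorter and cleaner. The reverse implication you handle just as the paper does, via the homeomorphism $\F[1]{X}\cong X$.
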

\begin{proof}
First, assume $\K{X}=\F{X}$, and let $\C\subset\K{X}$ be compact. Write $\C=\bigcup_{n\in\N}\C_n$ where $\C_n=\C\cap\F[n]{X}$. Notice each $\C_n$ is compact because $\F[n]{X}$ is closed in $\F{X}$.

\begin{claim}[$\!\!$]
Each $\C_n$ is finite.
\end{claim}

Fix $n\in\N$. To prove the claim, consider the natural identification $\pi:{}\sp{n}X\to\F[n]{X}$ that sends each $n$-tuple to the set of its coordinates. Also, consider $\pi_k:{}\sp{n}X\to X$ the projection onto the $k$th-coordinate. Since $\pi$ is perfect, the set $K_n=\pi_k[\pi\sp\leftarrow[\C_n]]$ is a compact subset of $X$ and thus, finite. Now, $\pi\sp\leftarrow[\C_n]\subset K_1\times\cdots\times K_n$ so $\C_n$ must also be finite. This proves the Claim.

By the Claim, $\C$ is a compact Hausdorff countable space. Since the weight of an infinite compact Hausdorff space is less or equal to its cardinality (\cite[3.1.21]{engelking}), $\C$ is a compact metric space. Assume $\C$ is infinite, then we can find a faithfully indexed sequence $\{A_n:n<\omega\}\subset\C$ such that $A_0=\lim A_n$. 

Let $A_0=\{x_0,\ldots,x_s\}$ and take $U_0,\ldots, U_s$ pairwise disjoint open sets such that $x_i\in U_i$ for $i\leq s$. We may thus assume that for every $n<\omega$, $A_n\in\la U_0,\ldots, U_s\ra$. For each $n\in\N$, let $k_n\leq s$ be such that $A_n\cap U_{k_n}\neq\{x_{k_n}\}$, we may assume without loss of generality that $k_n=0$ for every $n\in\N$. Let
$$
Y=\bigcup\{A_n\cap U_0:n<\omega\}.
$$

First, if $Y$ is finite, there is an open set $V$ such that $V\cap Y=\{x_0\}$, so the neighborhood $\la V\cap U_0,U_1,\ldots,U_s\ra$ intersects the sequence only in $A_0$, which contradicts the convergence of the $A_n$. Thus, $Y$ is infinite. We now prove that $Y$ converges to $x_0$. Let $V$ be an open set such that $x_0\in V$. Let $k<\omega$ be such that $A_n\in\la V\cap U_0,U_1,\ldots, U_s\ra$ for each $n\geq k$. From this it follows that the set
$$
Y-\bigcup\{A_n\cap U_0:n<k\}
$$
is a cofinite subset of $Y$ contained in $V$. Thus, $Y$ is a nontrivial convergent sequence in $X$. This contradiction implies $\C$ is finite.

The other implication follows from the fact that $X$ is homeomorphic to $\F[1]{X}\subset\K{X}$.
\end{proof}

We end the discussion by showing that weak $P$-spaces are not the only ones in which the equality compact=finite holds.

\begin{ex}\label{ex1}
Let $X=\omega\cup P$, where $P$ is the set of weak $P$-points of $\omega\sp\ast$. It is a famous result of Kunen (\cite{kunen}) that $P$ is a dense subset of $\omega\sp\ast$ of cardinality $2^{2^\omega}$. We claim that $\K{X}=\F{X}$. Every infinite compact space contains a separable compact subspace, so it is sufficient to show that the closure of every infinite countable subset $N\subset X$ is not compact. Since $P$ is a weak $P$-space closed in $X$, $\cl{N\cap P}=\cl[P]{N\cap P}=N\cap P$ that is compact if and only if it is finite. Thus, we may assume $N\subset\omega$. Since $\omega\sp\ast-P$ is also dense in $\omega\sp\ast$, $\cl[\beta\omega]{N}-X\neq\emptyset$. It easily follows that $\cl{N}$ is not compact. Notice that $X$ is not a weak $P$-space because $N$ is dense in $X$.

\raggedleft{\qedsymbol}
\end{ex}

Observe that the space $X$ from Example \ref{ex1} is extremally disconnected because it is a dense subspace of $\beta\omega$. We now present an example of a space whose compact subspaces are finite but it is not an $F\sp\prime$-space. Recall a space has countable cellularity if every collection of pairwise disjoint nonempty open subsets is countable. For the proof of the following fact follow the hint in \cite[6L(8)]{porterwoods}.

\begin{fact}\label{cell}
Every $F\sp\prime$-space of countable cellularity is extremally disconnected.
\end{fact}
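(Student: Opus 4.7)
The plan is to prove that in any Hausdorff space extremal disconnectedness is equivalent to the condition that disjoint open sets have disjoint closures, and then to produce, for any two disjoint open sets $U,V\subset X$, disjoint cozero sets $W\subseteq U$ and $W'\subseteq V$ that are dense in $U$ and $V$ respectively. The $F\sp\prime$-property applied to $W$ and $W'$ will then close the argument.

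The cozero-approximation step is where both hypotheses enter essentially. Since $X$ is Tychonoff, the cozero subsets of $U$ form a $\pi$-base for $U$, so by Zorn's Lemma one may choose a maximal pairwise disjoint family $\mathcal{U}_0$ of cozero subsets of $U$. Countable cellularity forces $\mathcal{U}_0$ to be countable, say $\mathcal{U}_0=\{U_n:n<\omega\}$. Crucially, a countable union of cozero sets is again a cozero set: if $U_n=f_n\sp\leftarrow[(0,1]]$ with $0\le f_n\le 1$, then the continuous function $f=\sum_n 2\sp{-n}f_n$ satisfies $f\sp\leftarrow[(0,1]]=\bigcup_n U_n$. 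By maximality of $\mathcal{U}_0$, the cozero set $W=\bigcup_n U_n$ is dense in $U$, so $\cl{W}=\cl{U}$. The same construction applied to $V$ yields a cozero set $W'\subseteq V$ with $\cl{W'}=\cl{V}$.

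Since $U\cap V=\emptyset$ the cozero sets $W,W'$ are disjoint, and the $F\sp\prime$-property gives $\cl{W}\cap\cl{W'}=\emptyset$, hence $\cl{U}\cap\cl{V}=\emptyset$. For an arbitrary open $U\subseteq X$, applying this conclusion to the disjoint open pair $U$ and $X-\cl{U}$ yields $\cl{X-\cl{U}}\subseteq X-\cl{U}$, so $X-\cl{U}$ is closed and therefore $\cl{U}$ is open; thus $X$ is extremally disconnected.

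The main obstacle is the cozero approximation: one cannot hope to replace $U$ by a single cozero subset and invoke the $F\sp\prime$-property directly, because $U$ itself need not be a cozero set. Density is enough because closures are insensitive to it, and the countable cellularity hypothesis is exactly what keeps the maximal pairwise disjoint family small enough to be amalgamated into a single cozero set via a convergent weighted sum.
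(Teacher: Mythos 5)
Your proposal is correct and matches the intended argument: the paper gives no proof of this fact, deferring instead to the hint in \cite[6L(8)]{porterwoods}, and that hint is precisely your route --- approximate each open set from inside by a dense cozero set (a countable union of a maximal disjoint family of cozero sets, countable by cellularity and amalgamated into one cozero set via the weighted sum $\sum_n 2^{-n}f_n$), then apply the $F\sp\prime$-property to conclude that disjoint open sets have disjoint closures, which is equivalent to extremal disconnectedness. The only point worth making explicit is that the members of your maximal family should be taken cozero in $X$ (not merely in $U$), which is harmless since cozero sets of $X$ form a base and hence a $\pi$-base for $U$, exactly as your construction with the $f_n$ defined on $X$ already assumes.
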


\begin{ex}
Let $\omega=\bigcup\{A_n:n<\omega\}$ be a partition in infinite subsets. Let $\filt_0$ be the Frechet filter  (or any filter that contains it) and
$$
\filt=\{B\subset\omega:\{n<\omega:A_n-B\textrm{ is finite}\}\in\filt_0\}.
$$

Define the space $X=\omega\cup\{\filt\}$ where every point of $\omega$ is isolated and the neighborhoods of $\filt$ are of the form $\{\filt\}\cup A$ with $A\in\filt$. 

Any infinite compact subspace of $X$ must be a convergent sequence. Let $S\subset\omega$ be infinite. If there exists $m<\omega$ such that $S\cap A_m$ is infinite, let $R=\omega-A_m$. If for each $n<\omega$, $|S\cap A_n|<\omega$ holds, let $R=\omega-S$. In both cases $R\in\filt$ and $S-R$ is infinite, so $S$ cannot converge to $\filt$.

Also, notice that $X$ is an $F\sp\prime$-space if and only if it is extremally disconnected ($X$ has countable cellularity, use Fact \ref{cell}) and it is easy to see this happens if and only if $\filt$ is an ultrafilter. To see $\filt$ is not an ultrafilter, for each $n<\omega$, let $A_n=P_n\cup Q_n$ be a partition in infinite subsets. Then $P=\bigcup\{P_n:n<\omega\}\notin\filt$ and $Q=\bigcup\{Q_n:n<\omega\}\notin\filt$ but $\omega=P\cup Q$. Thus, $\filt$ is not prime so it is not an ultrafilter.

Thus, $X$ is a space in which all compact subsets are finite but it is not an $F\sp\prime$-space.

\raggedleft{\qedsymbol}
\end{ex}

\section{Hereditary disconnectedness}\label{herdisc}

Our first result gives a method to locate connected sets in a hyperspace.

\begin{lemma}\label{01}
Let $X$ be a Hausdorff space. Assume there is a $K\in\K{X}$ such that for every $U\in\co{X}$ with $K\subset U$ we have $X=U$. Then 
$$
\C=\{K\cup\{x\}:x\in X\}
$$
is a connected subset of $\K{X}$.
\end{lemma}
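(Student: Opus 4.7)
The plan is to exhibit $\C$ as a continuous image of $X$ and use the hypothesis on $K$ to block any separation. I would define $\varphi:X\to\K{X}$ by $\varphi(x)=K\cup\{x\}$; its image is precisely $\C$, so $\varphi$ factors as a surjection onto $\C$. Note that $K\in\C$ itself, since picking any $x\in K$ yields $\varphi(x)=K$.

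First, I would verify that $\varphi$ is continuous from the Vietoris basis. For a basic open set $\la U_0,\ldots,U_n\ra$, the preimage $\varphi^{\leftarrow}[\la U_0,\ldots,U_n\ra]$ is empty whenever $K\not\subset U_0\cup\cdots\cup U_n$, and otherwise equals $(U_0\cup\cdots\cup U_n)\cap\bigcap\{U_i:K\cap U_i=\emptyset\}$. This is a finite intersection of open subsets of $X$, which establishes continuity.

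Next, I would argue by contradiction. Suppose $\C=\U\cup\V$ is a separation into disjoint nonempty relatively clopen sets, and set $A=\varphi^{\leftarrow}[\U]$ and $B=\varphi^{\leftarrow}[\V]$. By continuity and surjectivity of $\varphi$, these are disjoint nonempty open subsets of $X$ whose union is $X$; hence both $A$ and $B$ are clopen in $X$. Since $\varphi(x)=K$ for every $x\in K$, the element $K\in\C$ belongs to one of $\U$ or $\V$; assume $K\in\U$ without loss of generality, so $K\subset A$. The hypothesis on $K$ then forces $A=X$, contradicting $B\neq\emptyset$.

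The only nontrivial step is the continuity check, which is routine from the Vietoris basis; once it is in hand, the assumption that every clopen subset of $X$ containing $K$ equals $X$ transfers directly via pullback to forbid any separation of $\C$.
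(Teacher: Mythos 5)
Your proposal is correct and takes essentially the same approach as the paper: the paper directly defines $U=\{x\in X:K\cup\{x\}\in\U\}$ and verifies by hand that $U$ and $V=X-U$ are open, which is precisely your continuity check for $\varphi(x)=K\cup\{x\}$ followed by pulling the separation back to a clopen partition of $X$. Both arguments then conclude identically by applying the hypothesis to the clopen set containing $K$.
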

\begin{proof}
Let $\U$ and $\V$ be open subsets of $\K{X}$ such that $K\in\U$, $\C\subset\U\cup\V$ and $\C\cap\U\cap\V=\emptyset$. Let $U=\{x\in X:K\cup\{x\}\in\U\}$ and $V=X-U$. Clearly, $K\subset U$, we now prove that $U$ is clopen.

First, we prove every point $x\in U$ is in the interior of $U$, we have two cases. If $x\in K$, let $n<\omega$ and $U_0,\ldots,U_n$ be open subsets of $X$ such that
$$
K\in\la U_0\ldots,U_n\ra\subset\U.
$$
Notice that $x\in K\subset U_0\cup\ldots\cup U_n\subset U$. If $x\notin K$, let $V_0,\ldots,V_m,W$ be open subsets of $X$ such that $K\subset V_0\cup\ldots\cup V_m$, $x\in W$, $W\cap (V_0\cup\ldots\cup V_m)=\emptyset$ and $K\cup\{x\}\in\la V_0,\ldots,V_m,W\ra\subset\U$. Then, $x\in W\subset U$.

Now let $x\in V$, then $K\cup\{x\}\in \C-\U\subset\V$. Let $V_0,\ldots,V_m,W$ be open subsets of $X$ such that $K\subset V_0\cup\ldots\cup V_m$, $x\in W$, $W\cap (V_0\cup\ldots\cup V_m)=\emptyset$ and $K\cup\{x\}\in\la V_0,\ldots,V_m,W\ra\subset\V$. Then $x\in W\subset V$. This proves $V$ is open and thus, $U$ is closed. 

Therefore, $U$ is clopen and contains $K$ so by hypothesis $U=X$. But this implies that $\C\subset\U$. Then $\C$ is connected.
\end{proof}

Using Lemma \ref{01}, we give a modification of Example 1.1 of \cite{pol} showing there was no need to add a Cantor set to the original space.

\begin{ex}\label{02}
Let $C\subset I$ be the Cantor set constructed by removing middle-thirds of intervals in the usual way, let $Q\subset C$ be the set of endpoints of the Cantor set and $P=C-Q$. For each $c\in C$, let

$$
L_c=
\begin{cases}
\{c\}\times ([0,1)\cap\Q),&\textrm{ if }c\in Q,\\
\{c\}\times ([0,1)-\Q),&\textrm{ if }c\in P.
\end{cases}
$$

Let $\f=\bigcup\{L_c:c\in C\}$. Notice $\f$ is homeomorphic to the Knaster-Kuratowski fan with its top point removed (see \cite[1.4.C]{engelkingdim}). It is easy to see that $\f$ is hereditarily disconnected. Let $\pi:\f\to C$ be the projection to the first coordinate (in the plane). We now prove:

\begin{claim}[1]
There is a compact $G\subset \f$ such that if $c\in C$, $|\pi^{\leftarrow}(c)\cap G|=1$.
\end{claim}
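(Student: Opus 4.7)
The plan is to produce a continuous injection $f\colon C\to[0,1/2]$ satisfying $f^{-1}(\Q)=Q$. Granted such an $f$, the set $G=\{(c,f(c)):c\in C\}$ is the image of the compact space $C$ under the continuous map $c\mapsto(c,f(c))$, so $G$ is compact and contained in $C\times[0,1/2]$; it meets every fiber $\pi^{\leftarrow}(c)$ in the single point $(c,f(c))$, and since $f(c)\in\Q$ exactly when $c\in Q$, the point $(c,f(c))$ lies in $L_c$. Thus $G$ will be the desired compact subset of $\f$ satisfying the claim.

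To build $f$, I will identify $C$ with $\{0,1\}^\omega$ via the standard ternary homeomorphism, under which $Q$ corresponds to the eventually-constant sequences. I will then construct a Cantor scheme of closed subintervals $I_s=[a_s,b_s]\subset[0,1/2]$ indexed by $s\in\{0,1\}^{<\omega}$, with $a_s,b_s\in\Q$, with disjoint children $I_{s0},I_{s1}$ nested in $I_s$, with the normalization $a_{s0}=a_s$ and $b_{s1}=b_s$, and with diameters tending to $0$ uniformly in $s$. Then $\{f(x)\}=\bigcap_n I_{x\restriction n}$ defines a continuous injection, and the normalization at once gives $f(s\cdot 0^\omega)=a_s$ and $f(s\cdot 1^\omega)=b_s$, so the inclusion $f(Q)\subset\Q$ is automatic.

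The main obstacle is the reverse inclusion $f^{-1}(\Q)\subset Q$: every rational in $f(C)$ must have an eventually-constant address in the tree. To arrange it, I fix an enumeration $\Q\cap[0,1/2]=\{q_n:n\in\omega\}$ and, at the $n$th splitting stage, inspect $q_n$. If $q_n$ lies in no level-$n$ interval, it was expelled earlier; if $q_n=a_s$ or $q_n=b_s$ for some $|s|=n$, its address is already $s\cdot 0^\omega$ or $s\cdot 1^\omega$; otherwise $q_n$ lies in the interior of a unique level-$n$ interval $I_s$, and I pick $b_{s0},a_{s1}\in\Q$ with $a_s<b_{s0}<q_n<a_{s1}<b_s$, placing $q_n$ in the removed middle gap. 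The key observation is that once $q_n$ is expelled, it lies outside every later level interval, so it can never accidentally reappear as an endpoint of a subsequent split. The remaining splits at each stage are performed with arbitrary rational endpoints, subject to the normalization and a shrinking-diameter bound such as $|I_{s0}|,|I_{s1}|<1/(n+1)$. Any rational $q\in f(C)$ must then equal some $a_s$ or $b_s$ (otherwise $q=q_n$ for some $n$ and would have been expelled at stage $n$), so $q$ has an eventually-constant address. This yields $f^{-1}(\Q)\subset Q$ and, combined with $f(Q)\subset\Q$, completes the proof of the claim.
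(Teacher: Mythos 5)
Your proof is correct, but it takes a genuinely different route from the paper's. The paper disposes of Claim 1 in two lines by invoking the classical back-and-forth fact that any countable dense subset of an interval can be carried onto the rationals by a homeomorphism: it sets $D=Q\cup[\Q\cap(I-C)]$, a countable dense subset of $I$, takes a homeomorphism $h:I\to[0,\frac{1}{2}]$ with $h[D]=\Q\cap[0,\frac{1}{2}]$, and lets $G$ be the graph of $h\restriction_{C}$; since $D\cap C=Q$ and $h$ is a bijection, the value $h(c)$ is rational exactly when $c\in Q$, so $G\subset\f$ and $G$ meets each fiber once. You instead build the needed map from scratch: a Cantor scheme of rational-endpoint intervals with the normalization $a_{s0}=a_s$, $b_{s1}=b_s$ (so eventually-constant addresses land exactly on the rational endpoints) combined with a diagonal expulsion of an enumeration of $\Q\cap[0,\frac{1}{2}]$ (so any rational surviving into $f(C)$ must be an endpoint, hence have an eventually-constant address). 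This is in effect a hands-on, one-sided back-and-forth proof of precisely the special case of the quoted classical fact that the claim requires, and your bookkeeping is sound: the observation that an expelled rational lies outside all later-level intervals (so it cannot recur as a later endpoint), and the endpoint-versus-interior dichotomy within a level (using pairwise disjointness of same-level intervals, which forces the address of $a_s$ or $b_s$ to be $s0^\omega$ or $s1^\omega$ by injectivity), are exactly the points that need checking. As for what each approach buys: the paper's argument is shorter and names the relevant classical principle, though it constructs more than is needed (a homeomorphism of all of $I$, rather than just an embedding of $C$); yours is self-contained and elementary, produces only the embedding $f:C\to[0,\frac{1}{2}]$ with $f^{\leftarrow}[\Q]=Q$ that the claim actually uses, and incidentally avoids the paper's small notational slips there ($f[D]$ for $h[D]$, and ``$G=f\restriction_{C}$'' where the graph of $h\restriction_{C}$ is meant).
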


To prove Claim 1, let $D=Q\cup[\Q\cap(I-C)]$ which is a countable dense subset of $I$. It is a well-known fact that there is a homeomorphism $h:I\to [0,\frac{1}{2}]$ such that $f[D]=\Q\cap[0,\frac{1}{2}]$. Let $G=f\restriction_{C}\subset C\times[0,\frac{1}{2}]$, the graph of the function $f$ restricted to the Cantor set. Claim 1 follows.

\begin{claim}[2]
Let $A,B$ closed sets of the plane such that $A\cap B\cap \f=\emptyset$, $G\subset A$ and $\f\subset A\cup B$. Then $\f\cap B=\emptyset$.
\end{claim}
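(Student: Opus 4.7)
\emph{Plan.}
The plan is to argue by contradiction: I assume some $b=(c_0,y_0)\in\f\cap B$ and derive an absurdity in four steps. The first reduction is to show that $C\times[0,1]\subset A\cup B$. For this, I observe that $\f$ is dense in $C\times[0,1)$: any open rectangle $(c-\delta,c+\delta)\times(y-\varepsilon,y+\varepsilon)\subset\mathbb{R}\times[0,1)$ with $(c-\delta,c+\delta)\cap C\neq\emptyset$ contains some $(c',y')$ with $c'\in Q\cap(c-\delta,c+\delta)$ (using density of $Q$ in $C$) and $y'\in\Q\cap(y-\varepsilon,y+\varepsilon)$, so $(c',y')\in L_{c'}\subset\f$. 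Since $\mathbb{R}^2\setminus(A\cup B)$ is open and $\f\subset A\cup B$, no point of $C\times[0,1)$ can lie outside $A\cup B$; closedness of $A\cup B$ then extends this to $C\times[0,1]$.

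The second step produces a uniform ``tube'' around $G$ contained in $A$. The compact sets $G$ and $\overline{\f\cap B}$ (closure in the plane) are disjoint, since a common point would lie in $A\cap B\cap\f=\emptyset$. Hence $\delta:=d(G,\overline{\f\cap B})>0$, and the open $\delta/2$-neighbourhood of $G$ in the plane meets $\f$ only in $\f\cap A$. Using closedness of $A$ together with density of $L_c$ in $\{c\}\times[0,1)$, I obtain a uniform $\eta>0$ with $\{c\}\times[g(c)-\eta,g(c)+\eta]\subset A$ for every $c\in C$ (truncated to $[0,1]$).

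The third step iterates by raising the comb. The construction of $f$ in Claim~1 forces $g(c)\in\Q$ if and only if $c\in Q$, so for any rational $\eta'\in(0,\eta)$, the raised comb $G^+:=\{(c,g(c)+\eta'):c\in C\}$ satisfies $g(c)+\eta'\in L_c^y$ for every $c$; thus $G^+\subset\f$, and the tube of Step~2 places $G^+\subset A$ as well. One then repeats Step~2 with $G^+$ in place of $G$ to extend the $A$-region further up each fibre, producing an increasing sequence of rational heights $s_n$ with $\{c\}\times[g(c)-\eta,g(c)+s_n]\subset A$ for every $c\in C$.

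The main obstacle is closing the iteration: one must show that $s_n$ eventually covers the whole vertical interval $[0,1]$. If $s_n\to\infty$ the argument is immediate. If instead $s_n\to s_\infty<\infty$, then by closedness of $A$ the limiting comb $G^{(\infty)}:=\{(c,g(c)+s_\infty):c\in C\}$ lies in $A$ with zero distance from $\overline{\f\cap B}$, and one must use the rationality-preserving property of $g$, together with the hypothesis $A\cap B\cap\f=\emptyset$, to rule out any common point of $G^{(\infty)}$ and $\overline{\f\cap B}$. A symmetric argument below $g(c)$ then gives $\{c\}\times[0,1]\subset A$ for every $c\in C$, whence $L_c\subset A$ for every $c$, so $\f\cap B=\emptyset$, contradicting the assumption.
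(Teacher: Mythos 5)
Your Steps 1--3 are sound: the tube argument around $G$ is correct (since $G\subset\f$ and $\overline{\f\cap B}\subset B$, a common point would lie in $A\cap B\cap\f$, so the two compacta are at positive distance), and raising the comb by a \emph{rational} offset $\eta'$ does keep it inside $\f$, because $g(c)\in\Q$ exactly when $c\in Q$. But the closing step is a genuine gap, and it is exactly where the Knaster--Kuratowski mechanism bites. Your increments $s_{n+1}-s_n$ are rationals bounded by distances $\eta_{n+1}$ that you only learn after reaching height $s_n$, so you have no control over the limit $s_\infty=\lim s_n$: it can perfectly well be \emph{irrational}. In that case the limiting comb $G^{(\infty)}=\{(c,g(c)+s_\infty):c\in C\}$ is \emph{not} contained in $\f$ --- for $c\in Q$ the height $g(c)+s_\infty$ is irrational, so $(c,g(c)+s_\infty)\notin L_c$, and for $c\in P$ membership is accidental. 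Consequently the one tool you have for separating a comb from $\overline{\f\cap B}$, namely the hypothesis $A\cap B\cap\f=\emptyset$, says nothing about a common point of $G^{(\infty)}$ and $\overline{\f\cap B}$: such a point lies in $A\cap B$ but need not lie in $\f$, so there is no contradiction to extract. Your suggested fix (``use the rationality-preserving property of $g$'') does not apply, since rationality of fibre membership is preserved only under rational offsets, and one cannot steer the $s_n$ to a prescribed rational limit. Replacing the sequence by $s^\ast=\sup\{s:\{c\}\times[g(c),g(c)+s]\subset A\ \text{for all}\ c\}$ runs into the same problem: $s^\ast$ may be irrational. This is not a removable technicality: any ``uniform vertical propagation'' must stall at irrational heights, which is precisely how the fan is engineered.

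The paper avoids uniform propagation altogether and argues fibrewise by Baire category. Enumerating $\Q\cap[0,1)=\{q_n:n<\omega\}$, it sets $K_n=\pi[A\cap B\cap(C\times\{q_n\})]$ and shows each $K_n$ is a compact \emph{nowhere dense} subset of $P$ (a point of $A\cap B$ at rational height over $c\in Q$ would lie in $\f$; density of $P$ in $C$ plus a regular-open-set argument gives nowhere density). Since $P$ is completely metrizable, hence Baire, there is a dense set of $c\in P$ avoiding every $K_n$; for such $c$ the \emph{entire} fibre $\{c\}\times I$ misses $A\cap B$ (irrational heights by the hypothesis on $\f$, rational heights because $c\notin K_n$), so by connectedness it lies wholly in $A$ or wholly in $B$, and $(c,g(c))\in G\subset A$ decides it. Density of these fibres in $\f$ then yields $\f\subset A$. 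In short: where you try to prove a uniform statement for all $c$ at once and founder on the limit height, the paper proves a total statement for \emph{generic} $c$ and lets density do the rest. If you want to salvage your outline, the missing ingredient is precisely this category step; the creeping iteration cannot be closed on its own.
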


To prove Claim 2, let $\Q\cap [0,1)=\{q_n:n<\omega\}$ be an enumeration. For each $n<\omega$, let $P_n=C\times\{q_n\}$ and $K_n=\pi[A\cap B\cap P_n]$. Notice that $K_n$ is a compact subset of $P$ because $A\cap B\cap \f=\emptyset$ and $\f\cap P_n=Q\times\{q_n\}$. 

Moreover, $K_n$ is nowhere dense in $P$. To see this, assume $W$ is a nonempty regular open subset of $C$ with $W\cap P\subset K_n$. We have $\cl[C]{W\cap P}=\cl[C]{W}$ because $P$ is dense in $C$. Let $x\in W\cap Q$, then $x\in W\subset\cl[C]{W\cap P}\subset K_n$. So $(x,q_n)$ is a point of $F$ whose first coordinate is in $K_n$, this implies $(x,q_n)\in A\cap B\cap \f$, a contradiction.

Since $P$ is completely metrizable, it is a Baire space and the set $Z=P-(\bigcup_{n<\omega}K_n)$ is a dense open subset of $P$. Fix $c\in Z$. Then for each $n<\omega$, $(c,q_n)\notin A\cap B$. Since $L_c$ is dense in $\{c\}\times I$, $\{c\}\times I\subset A\cup B$. Now, $\{c\}\times I$ is connected so either $\{c\}\times I\subset A$ or $\{c\}\times I\subset B$. Since $(c,f(c))\in G\subset A$, we necessarily have $\{c\}\times I\subset A$. But this implies that $\bigcup\{L_c:c\in Z\}$ is a dense subset of $\f$ contained in $A$. Then $\f\subset A$ so $\f\cap B=\emptyset$. This proves Claim 2.

By Claim 2 and Lemma \ref{01}, $\C=\{G\cup\{x\}:x\in \f\}$ is a connected subset of $\K{\f}$ with more than one point. We have proved that $\K{\f}$ is not hereditarily disconnected.

\raggedleft{\qedsymbol}

\end{ex}

A question one may ask is if $\K{X}$ is hereditarily disconnected when $X$ is an hereditarily disconnected space that is the union of two totally disconnected subspaces. Consider space $\f$ from Example \ref{02}: we can write $\f$ as the union of two totally disconnected subspaces $\f=[\f\cap\Q^2]\cup[\f-\Q^2]$ and $\K{\f}$ is not hereditarily disconnected. So we need more conditions that ensure that $\K{X}$ is hereditarily disconnected. Our Main Theorem shows that under certain conditions $\K{X}$ is hereditarily disconnected. Before proving it, we isolate two technical Lemmas we will use often.

\begin{lemma}\label{03}
Let $X$ be a $T_1$ space, $T\subset X$ such that
\begin{itemize}
\item[(a)]for every $x\in X-T$ there is a $W\in\co{X}$ such that $x\in W$ and $W\cap T=\emptyset$,
\item[(b)]$X-T$ is totally disconnected.
\end{itemize}
Let $\C\subset\K{X}$ be connected. Then the following holds
\begin{quote}
$(\ast)$ if $Y_1,Y_2\in\C$, then $Y_1-T=Y_2-T$.
\end{quote}
\end{lemma}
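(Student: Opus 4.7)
The plan is to argue by contradiction. Suppose $\C \subset \K{X}$ is connected but there exist $Y_1, Y_2 \in \C$ with $Y_1 - T \neq Y_2 - T$. After possibly interchanging the two sets, pick $p \in (Y_1 - T) \setminus (Y_2 - T)$; since $p \notin T$, this amounts to $p \in Y_1$ and $p \notin Y_2$. By hypothesis (a), choose $W_p \in \co{X}$ with $p \in W_p$ and $W_p \cap T = \emptyset$. Since $W_p \subseteq X - T$, hypothesis (b) forces the subspace $W_p$ to be totally disconnected.

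Next I would produce, for every $y \in Y_2$, a clopen set $V_y \in \co{X}$ with $p \in V_y$ and $y \notin V_y$. The construction splits into two cases. If $y \notin W_p$, take $V_y = W_p$; this works because $W_p$ is clopen in $X$, contains $p$, and misses $y$. If $y \in W_p$, then $y \neq p$ and both points lie in the totally disconnected space $W_p$, so there is a subset $V_y$ of $W_p$, clopen in $W_p$, with $p \in V_y$ and $y \notin V_y$. Because $W_p$ is itself clopen in $X$, any set clopen in $W_p$ is also clopen in $X$, so $V_y \in \co{X}$ in either case. This second case is the main obstacle in the argument, and it is precisely where both hypotheses are used: (a) localizes us to a clopen neighborhood of $p$ contained in $X - T$, and (b) then supplies the required clopen separation inside that neighborhood.

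Finally, $\{X - V_y : y \in Y_2\}$ is an open cover of the compact set $Y_2$, so one can extract a finite subcover indexed by $y_1, \ldots, y_n$ and set $V = \bigcap_{i=1}^n V_{y_i}$. Then $V \in \co{X}$, $p \in V$, and $V \cap Y_2 = \emptyset$. The set $V^- \cap \K{X}$ is open in the Vietoris topology, and its complement in $\K{X}$ equals $(X - V)^+ \cap \K{X}$, which is also open because $X - V$ is open. Hence $V^- \cap \K{X}$ is clopen in $\K{X}$. Since $p \in Y_1 \cap V$ while $Y_2 \cap V = \emptyset$, the trace $V^- \cap \C$ is a proper nonempty clopen subset of $\C$, contradicting the connectedness of $\C$ and completing the proof.
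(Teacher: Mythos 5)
Your proof is correct, and it is the mirror image of the paper's argument rather than a copy of it. The paper places the witness on the other side: it fixes $y\in Y_2-T$ with $y\notin Y_1$ and, for each $x\in Y_1-T$, builds a clopen set $U_x\subset X-T$ containing both $x$ and $y$ (implicitly a union of two sets supplied by hypothesis (a)), splits $x$ from $y$ inside $U_x$ using (b), and passes to the complement $W_x=X-V_x$, a clopen set containing $T\cup\{x\}$ and missing $y$; compactness of $Y_1$ then yields a finite union $W$ with $Y_1\cup T\subset W$ and $y\notin W$, and the clopen set $W^+$ disconnects $\C$. You instead fix $p\in Y_1-T$ with $p\notin Y_2$, confine the construction to a single clopen neighborhood $W_p\subset X-T$ of $p$, separate $p$ from each $y\in Y_2$ individually, and apply compactness to $Y_2$, obtaining a finite intersection $V\in\co{X}$ with $p\in V$ and $V\cap Y_2=\emptyset$; your separating clopen set is $V^-$ instead of $W^+$ (your check that $V^-\cap\K{X}$ is clopen, with complement $(X-V)^+\cap\K{X}$, is right). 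The two constructions are dual — finite intersections of clopen sets around a point versus finite unions of clopen sets around $T$ — and each uses (a), (b) and compactness in the same essential way, so neither is more general. Your arrangement is marginally tidier at two spots: you invoke (a) exactly once, at $p$, whereas the paper needs clopen sets through two prescribed points and must combine two applications of (a) to get each $U_x$; and your case split ($y\notin W_p$ versus $y\in W_p$) automatically disposes of the points of $Y_2\cap T$ and of the degenerate situation $Y_1\subset T$, in which the paper's covering family, being indexed by $Y_1-T$, would be empty and strictly speaking requires a separate one-line fix. Since the statement is symmetric in $Y_1$ and $Y_2$, your initial relabeling is legitimate, and the proof stands as written.
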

\begin{proof}
For the sake of producing a contradiction, let us assume $(\ast)$ does not hold for some $Y_1,Y_2\in\C$. Let, without loss of generality, $y\in Y_2-T$ be such that $y\notin Y_1$. For each $x\in Y_1-T$, let $U_x\in\co{X}$ such that $x,y\in U_x$ and $U_x\cap T=\emptyset$, this can be done by $(a)$. Since $U_x\subset X-T$ is totally disconnected by $(b)$, let $V_x\in\co{U_x}$ be such that $x\notin V_x$ and $y\in V_x$. Let $W_x=X-V_x$, observe both $V_x,W_x\in\co{X}$.

Notice that $T\cup\{x\}\subset W_x$ and $y\notin W_x$. By compactness, there is a finite set $\{x_0,\ldots,x_n\}\subset Y_1-T$ such that $Y_1\cup T\subset W_{x_0}\cup\cdots\cup W_{x_n}$. So $W=W_{x_0}\cup\ldots\cup W_{x_n}$ is a clopen subset of $X$ such that $Y_1\in W^+$ and $Y_2\notin W^+$. But this contradicts the connectedness of $\C$ so $(\ast)$ holds. 
\end{proof}

\begin{lemma}\label{04}
Let $X$ be a $T_1$ space, $T\subset X$ a closed subset and $\emptyset\neq\C\subset\K{X}$ such that
\begin{itemize}
\item[(a)] if $Y_1,Y_2\in\C$, then $Y_1-T=Y_2-T$,
\item[(b)] if $Y\in\C$, then $Y\cap T\neq\emptyset$.
\end{itemize}
Define $\Phi:\C\to\K{T}$ by $\Phi(Y)=Y\cap T$. Then $\Phi$ is a well-defined, injective and continuous function.
\end{lemma}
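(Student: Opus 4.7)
The well-definedness and injectivity are routine: $Y\cap T$ is closed in the compact $Y$, hence compact, and nonempty by (b), so $\Phi(Y)\in\K{T}$; if $\Phi(Y_1)=\Phi(Y_2)$, then combining $Y_1\cap T=Y_2\cap T$ with (a) and the decomposition $Y_i=(Y_i-T)\sqcup(Y_i\cap T)$ yields $Y_1=Y_2$.

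The substance lies in continuity. My plan is to verify that the preimage of every basic Vietoris-open set $\la V_0,\ldots,V_n\ra\cap\K{T}$ is open in $\C$. Write each $V_i=U_i\cap T$ for some open $U_i\subseteq X$, and let $R:=Y-T$, which is independent of $Y\in\C$ by (a). Fix $Y_0\in\C$ with $\Phi(Y_0)\in\la V_0,\ldots,V_n\ra$ and choose $y_i\in\Phi(Y_0)\cap V_i$ for each $i\leq n$.

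The inclusion requirement $\Phi(Y)\subseteq V_0\cup\cdots\cup V_n$ is equivalent, via $Y=R\cup\Phi(Y)$, to $Y\subseteq U^\ast:=(U_0\cup\cdots\cup U_n)\cup(X-T)$, an open subset of $X$; so this part cuts out the Vietoris-open set $(U^\ast)^+\cap\C$, which contains $Y_0$. For each hitting requirement $\Phi(Y)\cap V_i\neq\emptyset$ I split cases on whether $y_i\in\cl{R}$. If $y_i\notin\cl{R}$, take $O_i:=U_i\cap(X-\cl{R})$; this is open, contains $y_i$, is contained in $U_i$, and satisfies $O_i\cap R=\emptyset$. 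Then for any $Y\in\C$, the decomposition gives $Y\cap O_i=\Phi(Y)\cap O_i\subseteq\Phi(Y)\cap U_i\cap T=\Phi(Y)\cap V_i$, so imposing $Y\in O_i^-$ is enough. If instead $y_i\in\cl{R}$, I use $R\subseteq Y$ together with the closedness of $Y$ in $X$ to conclude $\cl{R}\subseteq Y$; then $y_i\in\cl{R}\cap T\subseteq\Phi(Y)$ automatically for every $Y\in\C$, so no extra constraint is needed for this index.

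Intersecting $(U^\ast)^+\cap\C$ with the finitely many $O_i^-$ over those indices with $y_i\notin\cl{R}$ produces an open neighborhood of $Y_0$ in $\C$ whose $\Phi$-image lies in $\la V_0,\ldots,V_n\ra$, which is exactly what continuity demands. The main technical point — and the likely main obstacle — is the $y_i\in\cl{R}$ case, which implicitly relies on compact subsets being closed; this is granted by the Hausdorff context in which the lemma is actually invoked (e.g. in the proof of the Main Theorem).
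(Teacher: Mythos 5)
Your proof is correct and follows essentially the same route as the paper's: both fix the common difference set (your $R$, the paper's $Z$), decompose $Y=R\sqcup\Phi(Y)$, use $\cl{R}\subset Y$, and split the hitting conditions according to interaction with $\cl{R}$ — your pointwise split on $y_i\in\cl{R}$ versus the paper's split on whether $U_m\cap\cl{Z}=\emptyset$, and your subbasic sets $(U^\ast)^+$, $O_i^-$ versus the paper's single basic set $\la X-T,V_1,\ldots,V_n\ra$ (with each $V_m$ chosen to extend $U_m$ and to avoid $\cl{Z}$ when possible), are only cosmetic variants of one argument. Your closing caveat is unnecessary: since $\K{X}\subset\CL{X}$, every $Y\in\C$ is closed in $X$ by definition of the hyperspace, so $\cl{R}\subset Y$ needs no Hausdorff hypothesis and your argument is valid in the stated $T_1$ generality, exactly as the paper's proof, which uses the same fact $\cl{Z}\subset Y$.
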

\begin{proof}
The function $\Phi$ is well-defined by $(b)$ and is injective by $(a)$, we only have to prove the continuity. Let $Y_0\in\C$. Define $Z=Y_0-T$. Notice that by $(a)$, $Z=Y-T$ for every $Y\in \C$. If $Z=\emptyset$, $\Phi$ is an inclusion that is clearly continuous so assume $Z\neq\emptyset$.

Let $\U$ be an open subset of $\K{T}$ with $\Phi(Y_0)\in\U$. We now prove there is an open subset $\V$ of $\K{X}$ such that $Y_0\in\V$ and $\Phi[\V\cap\C]\subset\U$. We may assume that $\U=\la U_1,\ldots,U_n\ra$ where $U_1,\ldots,U_n$ are nonempty open subsets of $T$.

Let $V_0=X-T$. For $1\leq m\leq n$, let $V_m$ be an open subset of $X$ such that $V_m\cap T=U_m$ and if $U_m\cap\cl{Z}=\emptyset$, then also $V_m\cap\cl{Z}=\emptyset$. Let $\V=\la V_0,V_1,\ldots,V_n\ra$, clearly $Y_0\in\V$.

Let $Y\in\V\cap\C$. First, if $y\in\Phi(Y)$, then $y\in V_m\cap T$ for some $1\leq m\leq n$. Thus, $\Phi(Y)\subset U_1\cup\ldots U_n$. Now, let $1\leq m\leq n$. If there is a point $y\in U_m\cap \cl{Z}\neq\emptyset$, then since $\cl{Z}\subset Y$, $y\in U_m\cap\Phi(Y)$. If $U_m\cap\cl{Z}=\emptyset$, let $y\in Y\cap V_m$ so that $y\in U_m\cap\Phi(Y)$. In both cases, $U_m\cap\Phi(Y)\neq\emptyset$. This shows $\Phi(Y)\in\U$ and completes the proof.

\end{proof}

The Main Theorem will be proved in two steps. The first step is to add just one point to a totally disconnected space.

\begin{propo}\label{05}
Let $X$ be a Hausdorff hereditarily disconnected space and $p\in X$ be such that $X-\{p\}$ is totally disconnected. Then $\K{X}$ is hereditarily disconnected.
\end{propo}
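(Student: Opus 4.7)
Let $\C\subset\K{X}$ be a connected subset; my goal is to show $|\C|=1$. The plan is to iterate along the descending sequence of quasi-components of $p$: set $T_\alpha=\q[\alpha]{X}{p}$ for each ordinal $\alpha$. Since $X$ is hereditarily disconnected, the index $\gamma=\nc{X}{p}$ is the least ordinal with $T_\gamma=\{p\}$, and every $T_\alpha$ is a closed subspace of $X$ with $T_\alpha-\{p\}\subset X-\{p\}$ totally disconnected. I will construct by transfinite induction on $\alpha\leq\gamma$ a continuous injection
\[
\Phi_\alpha\colon\C\to\K{T_\alpha},\qquad \Phi_\alpha(Y)=Y\cap T_\alpha.
\]
At $\alpha=\gamma$ the target is $\K{\{p\}}=\{\{p\}\}$, so $\Phi_\gamma$ being injective forces $|\C|=1$.

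For the successor step $\alpha+1$, I apply Lemma~\ref{03} with ambient space $T_\alpha$, subset $T_{\alpha+1}=\q{T_\alpha}{p}$, and connected set $\Phi_\alpha(\C)\subset\K{T_\alpha}$. Hypothesis (a) holds because for $x\in T_\alpha-T_{\alpha+1}$ there is a clopen $U$ of $T_\alpha$ containing $p$ but not $x$, and then $T_\alpha-U$ is a clopen witness disjoint from $T_{\alpha+1}\subseteq U$; hypothesis (b) is immediate since $T_\alpha-T_{\alpha+1}\subset X-\{p\}$ is totally disconnected. The lemma yields $A-T_{\alpha+1}$ constant on $\Phi_\alpha(\C)$. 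If some $A_0\in\Phi_\alpha(\C)$ misses $T_{\alpha+1}$, compactness of $A_0$ together with the clopen witnesses from (a) produce a single clopen $W\subseteq T_\alpha$ with $A_0\subset W$ and $W\cap T_{\alpha+1}=\emptyset$; then $W^+$ is clopen in $\K{T_\alpha}$, connectedness of $\Phi_\alpha(\C)$ forces every $A$ into $W$, and the constant value of $A-T_{\alpha+1}$ forces $A=A_0$. Injectivity of $\Phi_\alpha$ then gives $|\C|=1$ and the proof halts. Otherwise every $A$ meets $T_{\alpha+1}$, and Lemma~\ref{04} applied in $T_\alpha$ produces a continuous injection $\Psi\colon\Phi_\alpha(\C)\to\K{T_{\alpha+1}}$; I set $\Phi_{\alpha+1}=\Psi\circ\Phi_\alpha$.

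The main obstacle is the limit step $\beta$, where the inductive recipe does not directly manufacture $\Phi_\beta$ from its predecessors. My plan is to apply Lemma~\ref{04} once more, in the ambient space $X$ with closed subset $T_\beta$ and the original connected set $\C$. Hypothesis (a) of Lemma~\ref{04}, namely $Y_1-T_\beta=Y_2-T_\beta$, follows from the disjoint decomposition $X-T_\beta=\bigsqcup_{\alpha<\beta}(T_\alpha-T_{\alpha+1})$ together with the successor-step identity $Y_1\cap(T_\alpha-T_{\alpha+1})=Y_2\cap(T_\alpha-T_{\alpha+1})$ supplied by Lemma~\ref{03} at stage $\alpha+1$. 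Hypothesis (b), $Y\cap T_\beta\neq\emptyset$, is the compactness observation that $Y\cap T_\beta=\bigcap_{\alpha<\beta}(Y\cap T_\alpha)$ is a decreasing intersection of nonempty compact subsets of the compact set $Y$. Lemma~\ref{04} then delivers $\Phi_\beta$ continuous and injective, closing the induction and hence the proof.
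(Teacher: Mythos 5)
Your proof is correct and takes essentially the same route as the paper's: the same transfinite induction along the iterated quasicomponents $T_\alpha=\q[\alpha]{X}{p}$, using Lemma~\ref{03} at successor stages and Lemma~\ref{04} to produce the continuous injections $\Phi_\alpha(Y)=Y\cap T_\alpha$, with the same clopen $W\sp+$ halting case and the same compactness argument for nonemptiness at limit ordinals. The only difference is bookkeeping: you carry the maps $\Phi_\alpha$ as the inductive invariant, whereas the paper carries the equivalent pair of statements $(\ast)_\alpha$ and $(\star)_\alpha$.
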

\begin{proof}
Start with a connected subset $\C\subset\K{X}$. By considering iterated quasicomponents, we shall prove that $|\C|=1$.

For each ordinal $\alpha$, let $T_\alpha=\q[\alpha]{X}{p}$ and $\Gamma=\nc{X}{p}$. Notice that $\{T_\alpha:\alpha<\Gamma\}$ is a strictly decreasing family of closed subsets of $X$ that contain $p$ and $T_\Gamma=\{p\}$. We prove the following two properties by transfinite induction on $\alpha$:

\begin{quote}
$(\ast)_\alpha$ If $Y_1,Y_2\in\C$, then $Y_1-T_\alpha=Y_2-T_\alpha$.
\end{quote} 

\vskip0.2cm

\begin{quote}
$(\star)_\alpha$ If there exists $Y_0\in\C$ such that $Y_0\cap T_\alpha=\emptyset$, then $\C=\{Y_0\}$.
\end{quote}

To prove $(\ast)_0$, just apply Lemma \ref{03} to the pair of spaces $T_0\subset X$. Now, let $Y_0$ as in $(\star)_0$, so one can find $W\in\co{X}$ such that $Y_0\subset W$ and $T_0 \cap W=\emptyset$. But then $W\sp+$ is a clopen set so $Y\in W\sp+$ for all $Y\in\C$. By $(\ast)_0$, we get $(\star)_0$.

Now assume $(\ast)_\gamma$ and $(\star)_\gamma$ for every $\gamma\leq\beta$. We now prove $(\ast)_{\beta+1}$ and $(\star)_{\beta+1}$.

We first consider $(\ast)_{\beta+1}$. If there exists $Y_0\in\C$ such that $Y_0\cap T_\beta=\emptyset$, by $(\star)_\beta$, we have $\C=\{Y_0\}$ and $(\ast)_{\beta+1}$ is clearly true. So assume that every $Y\in\C$ intersects $T_\beta$. By Lemma \ref{04}, the function $\Phi_\beta:\C\to\K{T_\beta}$ defined by $\Phi_\beta(Y)=Y\cap T_\beta$ is continuous and injective. Let $\C_\beta=\Phi_\beta[\C]$. Using Lemma \ref{03} for the pair of spaces $T_{\beta+1}\subset T_\beta$ and the connected subset $\C_\beta$ we get for every $Y_1,Y_2\in\C$, $(Y_1\cap T_\beta)-T_{\beta+1}=(Y_2\cap T_\beta)-T_{\beta+1}$. By $(\ast)_\beta$, this implies $(\ast)_{\beta+1}$.

Notice that if there is a $Y_0\in\C$ such that $Y_0\cap T_\beta=\emptyset$, then $(\star)_{\beta}$ implies $(\star)_{\beta+1}$ so assume for every $Y\in\C$, $Y\cap T_\beta\neq\emptyset$. Again we may consider $\Phi_\beta$ and $\C_\beta$ as in the former paragraph. Let $Y_0\in\C$ such that $Y_0\cap T_{\beta+1}=\emptyset$. Then one can find $W\in\co{T_\beta}$ such that $\Phi_\beta[Y_0]\subset W$ and $W\cap T_{\beta+1}=\emptyset$. So $W\sp+$ is a clopen set that intersects the connected set $\C_\beta$, therefore, $\Phi_\beta[Y]\in W\sp+$ for every $Y\in\C$. By $(\ast)_{\beta+1}$ we conclude $(\star)_{\beta+1}$.

We have left to prove $(\ast)_\beta$ and $(\star)_\beta$ for $\beta$ a limit ordinal but these proofs follow from $(\ast)_\gamma$ and $(\star)_\gamma$ for each $\gamma<\beta$ using that $T_\beta=\bigcap_{\gamma<\beta} T_\gamma$.

Observe that $(\ast)_\Gamma$ means that if $Y_1,Y_2\in\C$, then $Y_1-\{p\}=Y_2-\{p\}$. By $(\star)_\Gamma$ it easily follows that $|\C|=1$. So $\K{X}$ is hereditarily disconnected.

\end{proof}

We now procede to prove the main result.

\begin{theproof}
Let $\C\subset\K{X}$ be a connected subset. Denote by $\pi:X\to X/F$ the quotient map and denote by $\widetilde{F}$ the unique point in $\pi[F]$. Let $\mathcal{D}=\{\pi[C]:C\in\C\}$, this set is connected because $\mathcal{D}=\pi\sp\ast[\C]$ where $\pi\sp\ast:\K{X}\to\K{X/F}$ is the continuous function defined in Lemma \ref{associated}. Using Proposition \ref{05} for $\widetilde{F}\in X/F$ it follows that $\mathcal{D}=\{T\}$ for some $T\in\K{X/F}$. If $\widetilde{F}\notin T$, since $\pi$ is inyective in $X-F$, $|\C|=1$. If $\widetilde{F}\in T$, then $Y\cap F\neq\emptyset$ for every $Y\in\C$. Thus, by Lemma \ref{04}, the function $\Phi:\C\to\K{F}$ given by $\Phi(Y)=Y\cap F$ is continuous and injective. But $F$ is totally disconnected, so by Theorem \ref{michael2}, $\K{F}$ is totally disconnected. Thus, $|\C|=|\mathcal{D}|=1$. 

\raggedleft{\qedsymbol}
\end{theproof}

A natural question here is if the converse to the Main Theorem is true. That is, assume $X=Y\cup F$ where both $Y,F$ are totally disconnected, $F$ is closed and $\K{X}$ is hereditarily disconnected, is it true that the quotient $X/F$ must also be hereditarily disconnected? When $F$ is compact, the answer is in the affirmative (Corollary \ref{06}) but it may not be in general (Case 2 of the Example from Section \ref{example}).

\begin{coro}\label{06}
Let $X$ be a Hausdorff space. Assume $X=Y\cup T$ where both $Y$ and $T$ are totally disconnected and $T$ is compact. Then $\K{X}$ is hereditarily disconnected if and only if the quotient space $X/T$ is hereditarily disconnected.
\end{coro}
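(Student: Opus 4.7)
The plan is to prove both directions of the equivalence. The implication from right to left will follow immediately from the Main Theorem applied with $F=T$: compactness of $T$ in the Hausdorff space $X$ makes $T$ closed, $T$ is totally disconnected by hypothesis, and $X-T\subseteq Y$ is totally disconnected, so all the hypotheses of the Main Theorem are met, and $\K{X}$ is hereditarily disconnected whenever $X/T$ is.

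For the other direction I will argue by contrapositive. Let $\pi:X\to X/T$ denote the quotient map and write $\widetilde{T}=\pi[T]$. Suppose $C\subseteq X/T$ is a connected set with $|C|>1$. If $\widetilde{T}\notin C$, then $\pi$ restricts to a homeomorphism of $X-T$ onto $X/T-\{\widetilde{T}\}$, so $\pi\sp\leftarrow[C]$ is a connected subset of the totally disconnected space $X-T\subseteq Y$, forcing $|C|=1$. Hence I may assume $\widetilde{T}\in C$. Define $\psi:X\to\K{X}$ by $\psi(x)=T\cup\{x\}$; a direct argument with a basic Vietoris neighborhood $\la W_0,\ldots,W_n\ra$ of $T\cup\{x_0\}$ (taking $O=\bigcap\{W_i:x_0\in W_i\}$ as the required open neighborhood of $x_0$) shows that $\psi$ is continuous. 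Let $\mathcal{A}=\psi[\pi\sp\leftarrow[C]]$. Any $c\in C-\{\widetilde{T}\}$ has a unique preimage $x\notin T$ with $\psi(x)=T\cup\{x\}\neq T$, so $|\mathcal{A}|>1$.

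The heart of the argument is then to show that $\mathcal{A}$ is connected, giving the desired contradiction with hereditary disconnectedness of $\K{X}$. Suppose $\U,\V$ are open in $\K{X}$ with $\mathcal{A}\subseteq\U\cup\V$, $\mathcal{A}\cap\U\cap\V=\emptyset$, and $T\in\U$. The sets $U=\psi\sp\leftarrow[\U]$ and $V=\psi\sp\leftarrow[\V]$ are open in $X$; since every $x\in T$ satisfies $\psi(x)=T\in\U-\V$, we have $T\subseteq U$ and $T\cap V=\emptyset$, so $U$ and $V$ are both saturated with respect to $\pi$, and therefore $\pi[U]$ and $\pi[V]$ are open in $X/T$. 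A routine check then gives $C\subseteq\pi[U]\cup\pi[V]$, $\widetilde{T}\in\pi[U]-\pi[V]$, and $C\cap\pi[U]\cap\pi[V]=\emptyset$ (for $c\in C-\{\widetilde{T}\}$ the unique $x\in\pi\sp\leftarrow(c)$ cannot lie in $U\cap V$ without contradicting $\mathcal{A}\cap\U\cap\V=\emptyset$). Connectedness of $C$ now forces $C\cap\pi[V]=\emptyset$, whence $\mathcal{A}\subseteq\U$, as required. The main obstacle is precisely this descent of separations from $\K{X}$ to $X/T$, and it relies essentially on the compactness of $T$, which is what guarantees both that $\psi$ lands in $\K{X}$ and that the sets $U,V$ are saturated for $\pi$.
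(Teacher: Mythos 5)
Your proof is correct, and its skeleton is the paper's: the forward implication is the same application of the Main Theorem with $F=T$ (compactness plus Hausdorffness giving closedness, and $X-T\subseteq Y$ giving total disconnectedness of the complement), and in the reverse direction you construct exactly the paper's witness family $\mathcal{A}=\{T\cup\{x\}:x\in\pi\sp\leftarrow[C]\}$, which is the paper's $\C$ with $F=\pi\sp\leftarrow[R]$; you even prove the preliminary claim $\widetilde{T}\in C$ that the paper dismisses as clear. The one genuine divergence is how connectedness of this family is established. The paper cites its Lemma \ref{01} applied inside the subspace $F=\pi\sp\leftarrow[R]$; that is shorter and reuses a lemma that also powers Example \ref{02} and Corollary-adjacent results like Proposition \ref{07}, but it tacitly requires verifying the lemma's hypothesis --- that every $U\in\co{F}$ with $T\subseteq U$ equals $F$ --- and this verification is precisely the saturation-and-quotient argument you spell out. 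You instead prove connectedness directly: the preimages $U=\psi\sp\leftarrow[\U]$ and $V=\psi\sp\leftarrow[\V]$ are open (your continuity check for $\psi$, with $O=\bigcap\{W_i:x_0\in W_i\}$, is the same computation hiding inside the paper's proof of Lemma \ref{01}) and saturated, so a putative separation of $\mathcal{A}$ descends to a separation of the connected set $C$ in $X/T$. This buys a self-contained argument that never needs the trace set to be \emph{closed} in the ambient space (you exploit openness of both $\U$ and $\V$ where Lemma \ref{01} proves a set clopen) and that makes explicit where each hypothesis enters: compactness of $T$ so that $\psi$ lands in $\K{X}$, and closedness of $T$ so that $\pi\restriction_{X-T}$ is an open embedding onto $X/T-\{\widetilde{T}\}$ in your $\widetilde{T}\notin C$ case. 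One small imprecision in your closing sentence: saturation of $U$ and $V$ is not a consequence of compactness of $T$; it follows from $T\in\U-\V$ together with the fact that $T$ is the unique nondegenerate fiber of $\pi$. Compactness enters only through $T\in\K{X}$ and, via closedness, through the quotient and embedding facts just mentioned.
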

\begin{proof}
Let $\pi:X\to X/T$ be the quotient and $\widetilde{T}$ the unique point in $\pi[T]$. If $X/T$ is hereditarily disconnected, then $\K{X}$ is hereditarily disconnected by the Main Theorem. If $X/T$ is not hereditarily disconnected, let $R\subset X/T$ be a connected subset with more than one point. Clearly $\widetilde{T}\in R$. Let $F=\pi\sp\leftarrow[R]$, notice $T\subset F$. Define $\C=\{T\cup\{x\}:x\in F\}$ which is connected by Lemma \ref{01}. Moreover, $|\C|>1$ because $R\neq\{\widetilde{T}\}$.
\end{proof}

Let us prove that if $\K{X}$ has a connected subset with more than one point, then it must also contain a cannonical one in some sense.

\begin{propo}\label{07}
Let $X$ be a Hausdorff hereditarily disconnected space. If $\C\subset\K{X}$ is a connected set with more than one point and $K\in\C$, then there is a closed subset $F\subset X$ with $K\subsetneq F$ such that the set $\D=\{K\cup\{x\}:x\in F\}$ is connected and $|\D|>1$.
\end{propo}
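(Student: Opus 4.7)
The plan is to construct $F$ via a transfinite iteration of a ``quasi-component relative to $K$'' operation, and then apply Lemma \ref{01}. The preliminary observation is that, since $K$ is a compact Hausdorff subspace of the hereditarily disconnected space $X$, it is itself compact Hausdorff and hereditarily disconnected, hence totally disconnected; Theorem \ref{michael2} then gives that $\K{K}$ is totally disconnected, so the connected set $\C$ with $|\C|>1$ cannot lie in $\K{K}$, and there is some $C_0\in\C$ with $C_0\not\subset K$.

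Define a decreasing transfinite sequence of closed subsets of $X$ by $F_0=X$, $F_{\alpha+1}=\bigcap\{U\in\co{F_\alpha}:K\subset U\}$, and $F_\beta=\bigcap_{\alpha<\beta}F_\alpha$ for limit $\beta$. A standard induction gives that each $F_\alpha$ is closed in $X$ and contains $K$. The chain must stabilize at some least $\gamma$ with $F_{\gamma+1}=F_\gamma$; equivalently, every clopen subset of $F_\gamma$ containing $K$ equals $F_\gamma$. The central claim, proved by transfinite induction on $\alpha$, is that $C\subset F_\alpha$ for every $C\in\C$. In the successor step, using that the Vietoris topology on $\K{F_\alpha}$ coincides with the subspace topology inherited from $\K{X}$, for each $U\in\co{F_\alpha}$ with $K\subset U$ the set $U\sp+\cap\K{F_\alpha}$ is clopen in $\K{F_\alpha}$ and contains $K$, so its restriction to $\C$ is a nonempty clopen subset of $\C$; connectedness of $\C$ forces $U\sp+\cap\C=\C$, i.e., $C\subset U$ for every $C\in\C$, and intersecting over such $U$ yields $C\subset F_{\alpha+1}$.

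Setting $F=F_\gamma$, we have $C_0\subset F$ so $F\supsetneq K$. Since every clopen of $F$ containing $K$ is $F$ itself, Lemma \ref{01} applied to the Hausdorff space $F$ with compact set $K\in\K{F}$ shows that $\D=\{K\cup\{x\}:x\in F\}$ is connected in $\K{F}$, hence in $\K{X}$; the strict inclusion $F\supsetneq K$ gives $|\D|>1$. The main obstacle is the successor step of the induction: verifying that a clopen subset of $F_\alpha$ induces a clopen subset of $\C$, which is precisely where closedness of $F_\alpha$ in $X$ (to keep the Vietoris hyperspaces compatible) and the connectedness of $\C$ are both used.
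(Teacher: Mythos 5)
Your proof is correct, but it takes a genuinely different route from the paper's. The paper applies the Kuratowski--Zorn lemma to obtain a $\subset$-minimal closed set $F$ containing every member of $\C$, and then pushes $\C$ forward along the quotient $q:F\to\qs{F}$ onto the space of quasicomponents: since $\K{\qs{F}}$ is totally disconnected (Theorem \ref{michael2} together with Lemma \ref{associated}), $q\sp\ast[\C]$ is a single point $T$, and minimality of $F$ forces $F=q\sp\leftarrow[T]$, hence $q[K]=\qs{F}$, i.e., $K$ meets every quasicomponent of $F$; Lemma \ref{01} then finishes. You instead build $F$ from above by transfinitely iterating the ``clopen hull of $K$'' operation $F_{\alpha+1}=\bigcap\{U\in\co{F_\alpha}:K\subset U\}$ --- the relative-to-$K$ analogue of the iterated quasicomponents $\q[\alpha]{X}{p}$ that the paper uses in Proposition \ref{05} --- and show directly that every member of $\C$ survives each step, using that $U\sp+\cap\K{F_\alpha}$ is clopen in $\K{F_\alpha}$ and contains $K$, so connectedness of $\C$ traps $\C$ inside it; this avoids Zorn's lemma and the quotient machinery entirely, at the cost of a stabilization argument for the decreasing chain. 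Both proofs share the same two anchors: total disconnectedness of $\K{K}$ to produce $C_0\in\C$ with $C_0\not\subset K$ (the paper invokes this at the $K=F$ stage, you at the outset), and Lemma \ref{01} at the end. One difference worth noting: your fixed point $F_\gamma$ is guaranteed only to have no proper clopen subset containing $K$, which is exactly the hypothesis of Lemma \ref{01} but formally weaker than the paper's byproduct that $K$ meets every quasicomponent of $F$; the paper exploits that stronger fact later (Remark \ref{converseconnected}, Proposition \ref{ugly}), though it can be recovered from your version a posteriori by Remark \ref{converseconnected}. A small cosmetic point: closedness of the $F_\alpha$ in $X$ is needed so that the final $F$ is closed in $X$ (and so that the intersections remain closed), not for the compatibility of the hyperspace topologies --- for hyperspaces of compact sets, $\K{Y}$ carries the subspace topology inherited from $\K{X}$ for an arbitrary subspace $Y\subset X$.
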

\begin{proof}
Consider the set
$$
\mathcal{Z}=\{Z\subset X:Z\textrm{ is closed and for every }Y\in\C,Y\subset Z\}.
$$
By the Kuratowski-Zorn lemma, there exists a $\subset$-minimal element $F\in\mathcal{Z}$. Notice $K\subset F$. Let $\D=\{K\cup\{x\}:x\in F\}$.

Assume $K=F$. Since $K$ is compact, it is zero-dimensional and $\K{K}$ is also zero-dimensional (Theorem \ref{michael2}). Then $\C$ is a connected subset of $\K{K}$, this implies $|\C|=1$. This is a contradiction so we have $K\subsetneq F$, which implies $|\D|>1$.

Let $q:F\to\qs{F}$ be the quotient map onto the space of quasicomponents of $F$. Consider the continuous function $q\sp\ast:\K{F}\to\K{\qs{F}}$ from Lemma \ref{associated}. Since $\K{\qs{F}}$ is totally disconnected (Theorem \ref{michael2}), $q\sp\ast[\C]=\{T\}$ for some compact $T\subset\qs{F}$. Then $G=q\sp\leftarrow[T]$ is such that $G\subset F$ and $\C\subset\K{G}$. By minimality of $F$, $F=G$. Thus, $q[K]=q\sp\ast(K)=T=q[F]=\qs{F}$ so $K$ intersects every quasicomponent of $F$. From this and Lema \ref{01} it easily follows that $\D$ is a connected subset of $\K{X}$.

\end{proof}

To finish with this section, we generalize the ``countable'' in Theorem 1.3 of \cite{pol} to ``scattered''. We start with a useful remark that will help with the proof.

\begin{remark}\label{converseconnected}
If $F$ is hereditarily disconnected and $K\subset F$ is a compact subset such that $\{K\cup\{x\}:x\in F\}$ is connected, then $K$ intersects every quasicomponent of $F$.
\end{remark}

\begin{thm}\label{08}
Let $X$ be a Hausdorff hereditarily disconnected space. If $\C\subset\K{X}$ is connected and there exists $T\in\C$ that is scattered, then $|\C|=1$.
\end{thm}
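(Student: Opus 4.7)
The plan is to assume $|\C|>1$ and derive a contradiction by first reducing to a canonical form and then exploiting the scattered structure of $T$. Apply Proposition~\ref{07} with $K=T$ to obtain a closed $F\subset X$ with $T\subsetneq F$ such that $\D=\{T\cup\{x\}:x\in F\}$ is a connected subset of $\K{X}$ with $|\D|>1$; by Remark~\ref{converseconnected} the set $T$ meets every quasicomponent of $F$. A routine verification (along the lines of Lemma~\ref{01}) shows that the map $x\mapsto T\cup\{x\}$ factors through $F/T$ as a homeomorphism $F/T\cong\D$, so $F/T$ is connected with more than one point. The quasicomponent quotient $q:F\to\qs{F}$ therefore satisfies $q[T]=\qs{F}$, and Lemma~\ref{scattered} applied to $q$ restricted to $T$ yields that $\qs{F}$ is compact Hausdorff, scattered and totally disconnected.

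I would then prove the theorem by transfinite induction on the lexicographic pair $(\gamma,n)$, where $\gamma$ is the Cantor--Bendixson rank of $T$ (necessarily a successor $\delta+1$ by compactness of $T$) and $n=|T^{(\delta)}|$ is the size of its finite top. If $|\qs{F}|=1$, then $F$ has only trivial clopens and hence is connected, contradicting that $F$ is hereditarily disconnected with $|F|\geq 2$. Otherwise $\qs{F}$ has an isolated point $\tilde p$; letting $Q=q\sp\leftarrow(\tilde p)$, which is a proper nonempty clopen of $F$, we obtain a nonempty clopen partition $T=T_Q\sqcup T_{Q'}$ with $T_Q=T\cap Q$, $T_{Q'}=T\setminus T_Q$. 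The continuous intersection maps $\psi(A)=A\cap Q$ and $\psi'(A)=A\cap(F\setminus Q)$ from $\D$ to $\K{Q}$ and $\K{F\setminus Q}$ carry $\D$ to connected subsets $\{T_Q\cup\{x\}:x\in Q\}$ and $\{T_{Q'}\cup\{x\}:x\in F\setminus Q\}$ of exactly the same canonical form.

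Examining how $T^{(\delta)}$ distributes across $T_Q$ and $T_{Q'}$, at least one of the two pieces has strictly smaller parameter $(\gamma,n)$: if the top splits nontrivially then both pieces keep rank $\gamma$ but have strictly smaller top size, and if $T^{(\delta)}$ sits entirely in one side then the other side has strictly smaller rank. Applying the inductive hypothesis to the smaller piece forces the corresponding image set to be a singleton, which in turn forces the corresponding clopen half of $F$ to be contained in $T$. If both pieces are smaller we conclude $F\subset T$, contradicting $T\subsetneq F$; this closes all cases except the \emph{unbalanced} one, where $T^{(\delta)}$ lies entirely in (say) $T_Q$, so $T_{Q'}$ has rank below $\gamma$ while $T_Q$ retains pair $(\gamma,n)$.

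In this unbalanced case the inductive hypothesis on the $T_{Q'}$-side still gives $F\setminus Q\subset T_{Q'}\subset T$, and then $(Q,T_Q)$ satisfies all the hypotheses of our canonical setup (with $T_Q\subsetneq Q$ and $\psi[\D]$ a connected nontrivial subset of $\K{Q}$), so the reduction may be iterated. This produces a strictly descending transfinite chain of clopen subsets of $T$ all containing the finite set $T^{(\delta)}$, and the main obstacle of the proof is verifying that this iteration must terminate: because the Boolean algebra of clopens of $T$ is a set, the chain stabilises at some ordinal, and at the terminal stage either the balanced split finally occurs (and the inductive hypothesis closes the argument) or the chain has bottomed out at the finite set $T^{(\delta)}$, reducing us to the base case $\gamma=1$, which is handled by a secondary induction on $|T|$ using exactly the same quasicomponent-decomposition argument (with $F$ decomposed into its finitely many clopen quasicomponents).
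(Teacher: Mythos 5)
Your proposal follows the paper's opening reduction (Proposition~\ref{07}, Remark~\ref{converseconnected}, Lemma~\ref{scattered}), but the decomposition it is built on contains a structural error. You treat $Q=q\sp\leftarrow(\tilde p)$, the fiber of an isolated point of $\qs{F}$, as a possibly nontrivial clopen piece of $F$ across which the top level $T^{(\delta)}$ can distribute. It cannot: $Q$ is a \emph{clopen quasicomponent}, so its clopen subsets are clopen in $F$, whence $Q$ has no proper nonempty clopen subsets and is therefore connected; since $X$ (hence $F$) is hereditarily disconnected, $Q$ is a singleton $\{y\}$, and $y\in T$ because $q[T]=\qs{F}$. (This is precisely the paper's observation $(\star)_\alpha$.) Consequently $T_Q=Q=\{y\}$, so the ``canonical setup with $T_Q\subsetneq Q$'' into which your unbalanced case recurses is vacuous. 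Worse, $y$ is an isolated point of $F$, hence of $T$, so $y\notin T'$, and for infinite $T$ one has $(T\setminus\{y\})^{(\beta)}=T^{(\beta)}$ for every $\beta\geq 1$: the surviving side $T_{Q'}$ has \emph{exactly the same} pair $(\gamma,n)$. So whenever $|T|>1$ your balanced case and your ``top in $T_Q$'' case are empty, the lexicographic induction never strictly decreases, and the entire burden of proof falls on the transfinite iteration you yourself flagged as ``the main obstacle.''

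That iteration is in fact the whole proof, and your treatment of it has genuine gaps. At limit ordinals $F_\lambda=\bigcap_{\alpha<\lambda}F_\alpha$ is closed but no longer clopen, so the continuity of $A\mapsto A\cap F_\lambda$ --- needed to know that the limit family $\{T_\lambda\cup\{x\}:x\in F_\lambda\}$ is connected and that the construction can continue --- is not the routine clopen-intersection fact you invoke at successor steps; it is Lemma~\ref{04}, whose hypotheses (all members of $\C$ share the same trace outside $F_\lambda$; every member meets $F_\lambda$, i.e.\ $T_\lambda\neq\emptyset$) must themselves be carried through the induction, using that every point ever removed lies in $T$ and that the $T_\alpha$ form a strictly decreasing chain of nonempty compacta. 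Your termination argument also aims at the wrong target: since the limit sets are not clopen, ``the Boolean algebra of clopens of $T$ is a set'' bounds nothing, and in any case the issue is not chain length (any ordinal-indexed strictly decreasing chain of subsets of $T$ must stop) but what happens at the terminal stage, where you assert without proof that the chain ``bottoms out at the finite set $T^{(\delta)}$.'' The paper closes this by maintaining \emph{injectivity} of the intersection maps $\Phi_\alpha$ on $\C$ (its condition $(1c)_\alpha$, via Lemma~\ref{04}), proving the terminal stage is a successor $\Lambda+1$ with $F_\Lambda=T_\Lambda$ finite and discrete, so that $\C_\Lambda=\{T_\Lambda\}$ contradicts $|\C|>1$; nothing in your sketch maintains injectivity or identifies the terminal configuration. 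In outline, your peeling of isolated quasicomponents is the same mechanism as the paper's (which removes all isolated fibers of $\qs{F_\alpha}$ simultaneously rather than one at a time), but the Cantor--Bendixson induction meant to organize your argument does no work, and the limit-stage bookkeeping that constitutes the paper's actual proof is missing.
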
 
\begin{proof}
Assume that $\C\subset\K{X}$ is connected and $|\C|>1$. By Proposition \ref{07}, we may assume $\C=\{T\cup\{x\}:x\in F\}$ for some closed subset $F\subset X$ such that $T\subset F$. 

We now define a descending transfinite sequence of closed sets $F_\alpha$ ($\alpha$ an ordinal) in the following way. We first take $F_0=F$. Assume we have already defined $F_\alpha$. Let $q_\alpha:F_\alpha\to\qs{F_\alpha}$ be the quotient map and let $U_\alpha\subset\qs{F_\alpha}$ be the set of isolated points of $\qs{F_\alpha}$. Define $F_{\alpha+1}=F_\alpha-q_\alpha\sp\leftarrow[U_\alpha]$. Finally, if $\beta$ is a limit ordinal, let $F_\beta=\bigcap_{\alpha<\beta}F_\alpha$.

We also define for each ordinal $\alpha$, $T_\alpha=F_\alpha\cap T$ (so that $T_0=T$) and
$$
\C_\alpha=\{T_\alpha\cup\{x\}:x\in F_\alpha\}.
$$

By transfinite induction on $\alpha$ we shall prove the following properties
\begin{itemize}
\item[$(0)_\alpha$] If for each $\beta<\alpha$ we have $F_\beta\neq\emptyset$, then for each $\beta<\alpha$, $F_\alpha\subsetneq F_\beta$.
\item[$(1)_\alpha$] $(a)$ For every $Y_1,Y_2\in\C$, $Y_1-F_\alpha=Y_2-F_\alpha$,\\ $(b)$ For each $Y\in\C$, $T_\alpha\subset Y$,\\$(c)$ If $F_\alpha\neq\emptyset$, the function $\Phi_\alpha:\C\to\K{F_\alpha}$ given by $\Phi_\alpha(Y)=Y\cap F_\alpha$ is well-defined, continuous and injective. Moreover, $\C_\alpha=\Phi_\alpha[\C]$.
\item[$(2)_\alpha$] $q_\alpha[T_\alpha]=\qs{F_\alpha}$.
\end{itemize}

First, notice that $(1)_\alpha$ implies $(2)_\alpha$. To see this, observe that $(1c)_\alpha$ implies $\C_\alpha$ is connected. By Remark \ref{converseconnected}, we get $(2)_\alpha$.

Clearly, $(0)_0$ and $(1)_0$ are true. Assume $(0)_\alpha$, $(1)_\alpha$ and $(2)_\alpha$ hold. 

Since $T_\alpha$ is a compact Hausdorff scattered space, it must be $0$-dimensional so by $(2)_\alpha$ and Lemma \ref{scattered}, $\qs{F_\alpha}$ is a compact $0$-dimensional scattered space. Thus, if $F_\alpha\neq\emptyset$, then also $U_\alpha\neq\emptyset$ and since $q_\alpha$ is onto, $F_{\alpha+1}\subsetneq F_\alpha$. From this $(0)_{\alpha+1}$ follows.

Observe that for each $x\in U_\alpha$, $q_\alpha\sp\leftarrow(x)$ is a clopen quasicomponent of $F_\alpha$, so it must be an isolated point $\{y\}$. By $(2)_\alpha$, $y\in T_\alpha$. We have obtained
$$
(\star)_\alpha\ q_\alpha\sp\leftarrow[U_\alpha]\subset T_\alpha.
$$
So we can write 
$$
(\ast)_\alpha\ \C_\alpha=\{T_\alpha\cup\{x\}:x\in F_{\alpha+1}\}\cup\{T_\alpha\}.
$$

We now prove $(1)_{\alpha+1}$.

First, let $Y_1,Y_2\in\C$ and $x\in Y_1-F_{\alpha+1}$. If $x\notin F_\alpha$, by $(1a)_\alpha$, $x\in Y_2-F_\alpha\subset Y_2-F_{\alpha+1}$. If $x\in F_\alpha$, by $(\ast)_\alpha$, we get $T_\alpha\cup\{x\}=T_\alpha\cup\{y\}$ for some $y\in F_{\alpha+1}$ or $T_\alpha\cup\{x\}=T_\alpha$. Notice $x\neq y$ so it must be that $x\in T_\alpha$. Thus, $x\in T\subset Y_2$. We have obtained that $Y_1-F_{\alpha+1}\subset Y_2-F_{\alpha+1}$ and by a similar argument, $Y_2-F_{\alpha+1}\subset Y_1-F_{\alpha+1}$. This proves $(1a)_{\alpha+1}$.

Condition $(1b)_{\alpha+1}$ is true because of $(1b)_\alpha$ and the fact that $T_{\alpha+1}\subset T_\alpha$.

Assume $F_{\alpha+1}\neq\emptyset$. Notice that by $(2)_\alpha$, $T_{\alpha+1}=T_\alpha\cap F_{\alpha+1}\neq\emptyset$. Then, $(1b)_{\alpha+1}$ implies that for each $Y\in\C$, $Y\cap F_{\alpha+1}\neq\emptyset$. Using this, $(1a)_{\alpha+1}$ and Lemma \ref{04} it can be shown that $\Phi_{\alpha+1}$ is a well-defined, continuous and injective function. By similar arguments and $(1c)_\alpha$, we may define a function $\Psi:\C_\alpha\to\K{F_{\alpha+1}}$ by $\Psi(Y)=Y\cap F_{\alpha+1}$ which is continuous and injective. Moreover, the following diagram commutes:

$$
\xymatrix{
\C\ar[d]_{\Phi_\alpha}\ar[rd]\sp{\Phi_{\alpha+1}}&\\
\C_\alpha\ar[r]_-{\Psi}&\K{F_{\alpha+1}}
}
$$

From equation $(\ast)_\alpha$, we deduce $\Phi_{\alpha+1}[\C]=\Psi[\C_\alpha]=\C_{\alpha+1}$. This proves $(1c)_{\alpha+1}$.

Now, let us assume $(0)_\alpha$, $(1)_\alpha$ and $(2)_\alpha$ for all $\alpha<\gamma$ for some limit ordinal $\gamma$. Assume $F_\alpha\neq\emptyset$ for each $\alpha<\gamma$. Fix $\alpha<\gamma$. From $F_\gamma\subset F_{\alpha+1}\subset F_\alpha$ we see that $F_\gamma\neq F_\alpha$. Otherwise, $F_{\alpha+1}=F_\alpha$, which contradicts $(0)_{\alpha+1}$. Thus, we get $(0)_{\gamma}$.

From $F_\gamma=\bigcap_{\alpha<\gamma}F_\alpha$, $T_\gamma=\bigcap_{\alpha<\gamma}{T_\alpha}$ and $(1a)_\alpha,(1b)_\alpha$, one can easily deduce $(1a)_\gamma$ and $(1b)_\gamma$. Assume $F_\gamma\neq\emptyset$.  By $(2)_\alpha$, $T_\alpha\neq\emptyset$ for each $\alpha<\gamma$. Then by $(0)_\alpha$, the $T_\alpha$, with $\alpha<\gamma$, form a strictly descending chain of compact nonempty sets, this implies $T_\gamma=\bigcap_{\alpha<\gamma}{T_\alpha}\neq\emptyset$. By $(1a)_\gamma$ and $(1b)_\gamma$, we can apply Lemma \ref{04} to conclude that $\Phi_\gamma$ is well-defined, continuous and injective. Then, it is easy to see that $\Phi_\gamma[\C]=\C_\gamma$. This proves $(1c)_\gamma$.

This completes the induction. Notice that by $(0)_\alpha$, one can define
$$
\Gamma=\min\{\alpha:F_\alpha=\emptyset\}.
$$
 
One can show, using $(2)_\alpha$ and the compactness of the $T_\alpha$, that $\Gamma=\Lambda+1$ for some ordinal $\Lambda$. Observe that $F_\Gamma=F_\Lambda-q_\Lambda\sp\leftarrow[U_\Lambda]$, so every point of $F_\Lambda$ is isolated. Then, $T_\Gamma$ is a discrete compact set and thus finite. By $(2)_\Lambda$, $\qs{F_\Lambda}$ must be finite and since it is a space of quasicomponents, $F_\Lambda=\qs{F_\Lambda}$. Thus, $\C_\Lambda=\{T_\Lambda\}$. But $\C_\Lambda$ is the injective image of $\C$ under $\Phi_\Lambda$. This contradicts $|C|>1$. Therefore, $|\C|=1$.

\end{proof}

It is inmediate that the following holds

\begin{coro}
Let $X$ be a Hausdorff space. Then the following are equivalent
\begin{itemize}
\item[(a)] $X$ is hereditarily disconnected,
\item[(b)] for some (equivalently, for each) $n\in\N$, $\F[n]{X}$ is hereditarily disconnected,
\item[(c)] $\F{X}$ is hereditarily disconnected.
\end{itemize}
\end{coro}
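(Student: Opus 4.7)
The plan is to deduce this corollary as an immediate consequence of Theorem \ref{08} together with the obvious fact that every finite $T_1$ space is scattered (every nonempty finite subset has an isolated point because each of its points is isolated in the subspace topology of $T_1$ spaces).

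For the implication $(a)\Rightarrow(c)$, I would take a connected subset $\C\subset\F{X}$ and pick any $T\in\C$. Since $T$ is a finite $T_1$ space it is scattered, and $\C$ is also a connected subset of $\K{X}$ because the inclusion $\F{X}\hookrightarrow\K{X}$ is continuous. Theorem \ref{08} then forces $|\C|=1$, so $\F{X}$ is hereditarily disconnected.

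The implication $(c)\Rightarrow(b)$ is automatic since $\F[n]{X}\subset\F{X}$ and hereditary disconnectedness is hereditary to subspaces. For $(b)\Rightarrow(a)$, if $\F[n]{X}$ is hereditarily disconnected for some $n\in\N$, then $\F[1]{X}\subset\F[n]{X}$ is also hereditarily disconnected, and since $\F[1]{X}$ is homeomorphic to $X$ via $\{x\}\mapsto x$, so is $X$.

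Finally, the parenthetical equivalence ``for some (equivalently, for each)'' drops out of the cycle: if $\F[n]{X}$ is hereditarily disconnected for some $n$, then by $(b)\Rightarrow(a)\Rightarrow(c)$ we conclude that $\F{X}$ is hereditarily disconnected, and hence so is $\F[m]{X}$ for every $m\in\N$. There is no real obstacle here; the whole content has already been absorbed into Theorem \ref{08}, and the corollary is essentially a bookkeeping statement unpacking that theorem for the symmetric products and $\F{X}$.
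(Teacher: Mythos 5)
Your proof is correct and follows exactly the route the paper intends: the paper states this corollary as an immediate consequence of Theorem \ref{08}, and your argument---finite subsets of a Hausdorff space are scattered, so Theorem \ref{08} gives $(a)\Rightarrow(c)$, while $(c)\Rightarrow(b)\Rightarrow(a)$ follow from heredity and the embedding $X\cong\F[1]{X}$---is precisely the bookkeeping the authors left to the reader. Nothing is missing; the handling of the ``for some/for each'' clause via the implication cycle is also the standard and correct way to dispatch it.
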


\section{An example for the Main Theorem}\label{example}

In this section, we present two examples related to the Main Theorem. Notice that the statement of Corollary \ref{06} contains a converse of the statement of the Main Theorem for the case that $T$ is a compact space. The first example (Case 1 below) is an example of this inverse implication. The second example (Case 2 below) shows that one cannot obtain an inverse of the statement of the Main Theorem relaxing the requirement of compactness of $T$ to that of being a closed subset of $X$.

Consider $\cs$, the Cantor set as a product (where $2=\{0,1\}$ is a discrete space), and $[0,\infty]$ with the interval topology (that is, the Euclidean topology extended with a supremum $\infty$). Let $\pi:\cs\times[0,\infty]\to\cs$ and $h:\cs\times[0,\infty]\to[0,\infty]$ be the first and second projections, respectively. For each $i<\omega$, let $\rho_i:\cs\to 2$ be the projection onto the $i$-th coordinate. We will say a subset $A\subset\cs\times[0,\infty]$ is bounded if $\sup h[A]<\infty$ and unbounded if it is not bounded (thus, $h$ denotes the ``height''). Let $\phi:\cs\to[0,\infty]$ be the function
$$
\phi(t)=\sum_{m<\omega}\frac{t_m}{m+1}.
$$
We will consider the spaces $X=\{x\in\cs:\phi(x)<\infty\}$ and $X_0=\{(x,\phi(x)):x\in X\}$. In \cite[p. 600]{dijkstra}, Dijkstra shows that $X_0$ is homeomorphic to complete Erd\"os space. Moreover, this space has the following property

\begin{quote}
$(\nabla)$ every nonempty clopen subset of $X$ is unbounded.
\end{quote}

We will use the basis of $\cs$ formed by the clopen subsets of the form
$$
[a_0,\ldots, a_n]=\{x\in\cs:\rho_m(x)=a_m\textrm{ for all }m\leq n\}
$$
where $\{a_0,\ldots,a_n\}\subset\{0,1\}$.

Observe that both $X$ and $\cs-X$ are dense: for every open set of the form $[a_0,\ldots,a_n]$ we may choose $x,y\in [a_0,\ldots,a_n]$ such that $x_m=0=1-y_m$ for each $m>n$; then $x\in X$ and $y\in\cs-X$.

For each $K\subset\cs$ we define $K_0=K\times\{\infty\}$ and $Y=X_0\cup K_0$. Notice that since $\pi\restriction_{Y}$ is $\leq$2-to-1 and $\pi[Y]\subset\cs$ is $0$-dimensional, then $Y$ is hereditarily disconnected. By a similar argument, $X_0$ and $K_0$ are totally disconnected. We now analyze whether $\K{Y}$ is hereditarily disconnected for two specific examples for $K$.

\vskip0.2cm	
\noindent\underline{Case 1}. $K=\cs$.

\vskip0.2cm
First, $Y/K_0$ is connected: if $U\in\co{Y}$ is such that $K_0\subset U$, using the compactness of $K$ we get that $X-U$ is bounded, so $Y=U$ by $(\nabla)$. By Corollary \ref{06}, $\K{Y}$ is not hereditarily disconnected. Notice that in this case, $Y/K_0$ is homeomorphic to the space of Example 1.4.8 of \cite{engelkingdim}. By \cite[p. 600]{dijkstra}, $Y/K_0$ is also homeomorphic to the set of non-ordinary points of the Lelek fan.

\vskip0.2cm
\noindent\underline{Case 2}. $K=X$

\vskip0.2cm
First, we see that $Y/K_0$ is connected. Observe that since $K_0$ is not compact, $Y/K_0$ is not the same quotient as in Case 1 (it is not even first countable at the image of $K_0$). If $U\in\co{Y}$ is such that $K_0\subset U$ and $(x,\infty)\in K_0$, there exists $W\in\co{\cs}$ and $t\in[0,\infty)$ such that $(x,\infty)\in W\times(t,\infty]\subset U$. Thus, $V=(W\times [0,\infty])-U$ is a bounded clopen subset. By $(\nabla)$, $V=\emptyset$. Since $(x,\infty)$ was arbitrary, we get $U=Y$. Thus, $Y/K_0$ is connected. However, we cannot use Corollary \ref{06} because $K_0$ is not compact. In fact, we will show that $\K{Y}$ is hereditarily disconnected. Observe that the proof that $Y/K_0$ is connected can be modified to show that $Y$ is not totally disconnected, so it is not obvious that $\K{Y}$ is hereditarily disconnected.

A first attempt to prove that $\K{Y}$ is hereditarily disconnected could be showing that any compact subset of $Y$ is scattered and use Theorem \ref{08}. However, this is false. Recall that $\sum_{m=0}\sp\infty{\frac{1}{m+1}}=\frac{\pi\sp2}{6}<\infty$, then the subset
$$
P=\{(x,t)\in Y:\textrm{for each square-free }n\in\N, \rho_{n-1}(x)=0\}
$$
is bounded. As it is pointed out in \cite[p. 600]{dijkstra}, $P$ is homeomorphic to the Cantor set. Fortunately, every compact subset of $Y$ will be ``almost everywhere bounded'' in the sense of $(0)\sp\prime_\alpha$ below. We will follow the technique of Theorem \ref{08} to prove that $\K{Y}$ is hereditarily disconnected.

Assume that $\K{Y}$ contains a connected subset $\C$ with more than one point. We may assume by Proposition \ref{07} that $\C=\{T\cup\{x\}:x\in F\}$ for some closed $F\subset Y$ and some compact $T\subsetneq F$. We now construct a decreasing sequence of closed subsets $F_\alpha\subset Y$ for each ordinal $\alpha$. Start with $F_0=F$. If $F_\alpha$ has already been defined, let

$$
\begin{array}{cl}
U_\alpha= &\{x\in F_\alpha:\textrm{there is an open subset }U\subset\cs\textrm{ and }r\in(0,\infty)\textrm{ with }\\
&\ \pi(x)\in U\textrm{ such that if }y\in F_\alpha\cap\pi\sp\leftarrow[U],\textrm{ then }h(y)<r\textrm{ or }h(y)=\infty\}.
\end{array}
$$

Notice that $U_\alpha$ is open in $F_\alpha$. Moreover, if $x\in F_\alpha$ and $U$ is like in the definition above for $x$, then $F_\alpha\cap\pi\sp\leftarrow[U]\subset U_\alpha$. Thus

\begin{quote}
$(\star)_\alpha$ Let $x,y\in F_\alpha$ be such that $\pi(x)=\pi(y)$. Then $x\in U_\alpha$ if and only if $y\in U_\alpha$.
\end{quote}

So let $F_{\alpha+1}=F_\alpha-U_\alpha$, which is closed. Finally, if $\beta$ is a limit ordinal, let $F_\beta=\bigcap_{\alpha<\beta}F_\alpha$. We also define for each ordinal $\alpha$, $T_\alpha=T\cap F_\alpha$ and $\C_\alpha=\{T_\alpha\cup\{x\}:x\in F_\alpha\}$.

We now prove the following properties by transfinite induction.

\begin{itemize}
\item[$(1)_\alpha$] $(a)$ For every $Y\in\C$, $T_\alpha\subset Y$,\\ $(b)$ If $Y_1,Y_2\in\C$, $Y_1-F_\alpha=Y_2-F_\alpha$.\\ $(c)$ If $F_\alpha\ne\emptyset$, the function $\Phi_\alpha:\C\to\K{F_\alpha}$ given by $\Phi(Y)=Y\cap F_\alpha$ is well-defined, continuous and injective. Moreover, $\C_\alpha=\Phi_\alpha[\C]$. 
\item[$(2)_\alpha$] For each $x\in X$, $F_\alpha\cap\pi\sp\leftarrow(x)\neq\emptyset$ implies $T_\alpha\cap\pi\sp\leftarrow(x)\neq\emptyset$.
\item[$(3)_\alpha$] If $x\in\pi[U_\alpha]$, $F_\alpha\cap\pi\sp\leftarrow(x)=T_\alpha\cap\pi\sp\leftarrow(x)$.
\end{itemize}

We will procede in the following fashion:

\begin{itemize}
\item Step 1: $(1)_0$ is true.
\item Step 2: $(1{c})_\alpha$ implies $(2)_\alpha$ and $(3)_\alpha$ for each ordinal $\alpha$.
\item Step 3: $(1)_\alpha$ implies $(1)_{\alpha+1}$ for each ordinal $\alpha$.
\item Step 4: If $\beta$ is a limit ordinal, $(1)_\alpha$ for each $\alpha<\beta$ implies $(1)_\beta$.
\end{itemize}

This proof is very similar to that of Theorem \ref{08}, so we will omit some arguments when they follow in a similar way. Step 1 is clear, observe that $\Phi_0$ is the identity function. 

Proof of Step 2: Notice that if $F_\alpha=\emptyset$, $(2)_\alpha$ and $(3)_\alpha$ are true, so we may assume $F_\alpha\neq\emptyset$. Thus, $(1c)_\alpha$ implies $\C_\alpha$ is connected.

First, we prove $(2)_\alpha$. Let $x\in X$ and 
$$
(\bullet)\ Y\cap\pi\sp\leftarrow(x)=\{(x,t_0),(x,t_1)\}.
$$
Aiming towards a contradiction, assume $(x,t_0)\in F_\alpha$ and $T_\alpha\cap\pi\sp\leftarrow(x)=\emptyset$. Since $x$ is not in the compact set $\pi[T_\alpha]$, there is a $W\in\co{\cs}$ such that $x\in W$ and $W\cap\pi[T_\alpha]=\emptyset$. Since $T_\alpha\cup\{(x,t_0)\}\in(\pi\sp\leftarrow[W])\sp-$ and $T_\alpha\notin(\pi\sp\leftarrow[W])\sp-$, the clopen set $(\pi\sp\leftarrow[W])\sp-$ separates $\C_\alpha$. This contradiction shows that $(2)_\alpha$ holds.

Next, we prove $(3)_\alpha$. Let $x\in\pi[U_\alpha]$ be such that $\pi\sp\leftarrow(x)\cap F_\alpha\neq\emptyset$. Let us use equation $(\bullet)$ above. By $(2)_\alpha$ and the fact that $T_\alpha\subset F_\alpha$, we only have to show that the case when $\pi\sp\leftarrow(x)\cap F_\alpha=\pi\sp\leftarrow(x)$ and $\pi\sp\leftarrow(x)\cap T_\alpha=\{(x,t_0)\}$ is impossible. We will analyze when $t_0<\infty$, the other possibility being similar.

Since $x\in\pi[U_\alpha]$, there is an open subset $U\subset\cs$ and $r\in(0,\infty)$ such that if $y\in F_\alpha$ and $\pi(y)\in U$, then $h(y)\notin [r,\infty)$. Since $T_\alpha\cap(\cs\times[0,r])=R$ is a nonempty compact set and $x\notin\pi[R]$, there exists $W_0\in\co{\cs}$ such that $x\in W_0$ and $W_0\cap\pi[R]=\emptyset$. We may assume that $W_0\subset U$. Let
$$
W_1=(W_0\times [0,r])\cap F_\alpha=(W_0\times[0,r))\cap F_\alpha
$$
which is a clopen subset of $F_\alpha$. Further, $T_\alpha\cup\{(x,t_0)\}\in W_1\sp-$ and $T_\alpha\notin W_1\sp-$, this gives a separation of $\C_\alpha$. This is a contradiction so $(3)_\alpha$ follows.

Proof of Step 3: Assume $(1)_\alpha$. By Step 2, $(2)_\alpha$ and $(3)_\alpha$ hold. We may also assume that $F_{\alpha+1}\neq\emptyset$, otherwise $(1)_{\alpha+1}$ is clearly true. First we prove that

$$
(\ast)_\alpha\ \C_\alpha=\{T_\alpha\cup\{x\}:x\in F_{\alpha+1}\}\cup\{T_\alpha\}.
$$

The right side of $(\ast)_\alpha$ is clearly contained in the left side. Let $T_\alpha\cup\{x\}\in\C_\alpha$ with $x\in F_\alpha$. If $x\notin F_{\alpha+1}$, by $(3)_\alpha$, $x\in T_\alpha$. Thus, $T_\alpha\cup\{x\}=T_\alpha$ that is in the right side of $(\ast)_\alpha$. Thus, $(\ast)_\alpha$ follows.

We also need that $T_{\alpha+1}\neq\emptyset$. Let $x\in\pi[F_{\alpha+1}]$, by $(2)_\alpha$ there are $x_1,x_2\in F_\alpha\cap\pi\sp\leftarrow(x)$ such that $x_1\in F_{\alpha+1}$ and $x_2\in T_\alpha$. By $(\star)_\alpha$, $x_2\in T_{\alpha+1}$.

The remaining part of the argument is similar to that of Theorem \ref{08}, in the part where it is shown that $(1)_{\alpha+1}$ is a consequence of $(0)_\alpha$, $(1)_\alpha$ and $(2)_\alpha$. 

The proof of Step 4 is also similar to the part of Theorem \ref{08} where it is shown $(1)_\beta$ is the consequence of $(0)_\alpha$,$(1)_\alpha$ and $(2)_\alpha$ for all $\alpha<\beta$ when $\beta$ is a limit ordinal so we ommit it. This completes the induction.

The key to this example is the following statement:

\begin{quote}
$(0)_\alpha$ If $F_\alpha\neq\emptyset$, then $U_\alpha\neq\emptyset$.
\end{quote}

We shall use the technique Erd\"os used for for the proof of $(\nabla)$ (for the original Erd\"os space, see \cite{erdos}) to prove $(0)_\alpha$. Assume $F_\alpha\neq\emptyset$ but $U_\alpha=\emptyset$ for some $\alpha$. We now use induction to find elements $\{x_n:n<\omega\}\subset F_\alpha$, a strictly increasing sequence $\{s_n:n<\omega\}\subset\omega$, $y\in\cs-X$ and a decreasing sequence of open subsets $\{U_n:n<\omega\}$. For each $n<\omega$, call $t_n=\pi(x_n)$ and $y_n=\rho_n(y)$. We find all these with the following properties

\begin{itemize}
\item[$(i)$] $t_n\in U_n$,
\item[$(ii)$] for each $m\leq n$ and $r\leq s_n$, $\rho_r(t_m)=y_r$,
\item[$(iii)$] if $m<n$, then $m+h(x_m)<h(x_n)<\infty$,
\item[$(iv)$] if $m<n$, then $m+h(x_m)<\sum_{m=0}\sp{n+1}{\frac{y_m}{m+1}}<\infty$,
\item[$(v)$] $U_n=[y_0,\ldots,y_{s_n}]$.
\end{itemize}
 
For $n=0$ define $s_0=0$ and choose $x_0\in F_\alpha$ arbitrarily. Assume that we have the construction up to $n$. Since $x_n\notin U_\alpha$, there exists $x_{n+1}\in F_\alpha\cap\pi\sp\leftarrow[U_n]$ such that $n+h(x_n)<h(x_{n+1})<\infty$. Since
$$
\sum_{m<\omega}{\frac{\rho_m(t_{n+1})}{m+1}}=h(x_{n+1})<\infty,
$$
by the convergence of this series, there exists $s_{n+1}>s_n$ such that
$$
n+h(x_n)<\sum_{m=0}\sp{s_{n+1}}{\frac{\rho_m(t_{n+1})}{m+1}}.
$$

Define $y_m=\rho_m(t_{n+1})$ for $m\in\{s_n+1,\ldots,s_{n+1}\}$. Clearly, conditions $(i)-(v)$ hold. Notice that by $(iv)$, $\phi(y)=\infty$ so in fact $y\in\cs-X$.

By $(ii)$, $\{t_n:n<\omega\}$ converges to $y$. Moreover, by $(iii)$, $\{x_n:n<\omega\}$ converges to $(y,\infty)\notin Y$. Since $T_\alpha$ is compact, there exists $N<\omega$ such that for each $N\leq n<\omega$, $x_n\in F_\alpha-T_\alpha$.

Let $z_n=(t_n,\infty)$ for each $n<\omega$. If $N\leq n<\omega$ then by $(2)_\alpha$, $z_n\in T_\alpha$. But $\{z_n:N\leq n<\omega\}$ converges to $(y,\infty)\notin T_\alpha$, which is a contradiction. Thus, $(0)_\alpha$ follows.

Obseve that one may also use a similar argument to prove:

\begin{quote}
$(0)\sp\prime_\alpha$ $U_\alpha$ is dense in $F_\alpha$.
\end{quote}

We are now ready to produce a contradiction to the assumption that $\K{Y}$ is not hereditarily disconnected. By $(0)_\alpha$, we know that if $F_\alpha\neq\emptyset$, then $F_{\alpha+1}\subsetneq F_\alpha$. Thus, there exists 
$$
\Gamma=\min\{\alpha:F_\alpha=\emptyset\}.
$$
By $(2)_\alpha$ and a compactness argument, it can be proved that $\Gamma=\Lambda+1$ for some $\Lambda$. Then $U_\Lambda=F_\Lambda$, by $(3)_\Lambda$ this implies $F_\Lambda=T_\Lambda$. Thus $\C_\Lambda=\{T_\Lambda\}$. But $\Phi_\Lambda$ is an injective function by $(1c)_\Lambda$ so we have a contradiction. This contradiction proves that $\K{Y}$ is hereditarily disconnected.

\section{Final Remarks}

A substantial part of this paper was focused on giving conditions in $X$ so that $\K{X}$ is hereditarily disconnected. The Main Theorem and its Corollary \ref{06} are examples of this. We can also infer a little more by considering Theorem \ref{08}, Example \ref{02} and Case 2 from Section \ref{example}. It is clear that none of this results gives a complete solution to this problem. However, according to Proposition \ref{07} and Remark \ref{converseconnected}, we have the following characterization.

\begin{propo}\label{ugly}
Let $X$ be a hereditarily disconnected space. Then $\K{X}$ contains a connected set with more than one point if and only if there exists a closed subset $F\subset X$ and a compact subset $K\subsetneq F$ such that $K$ intersects every quasicomponent of $F$.
\end{propo}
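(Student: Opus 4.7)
The proposition is a clean packaging of results already proved earlier, so the plan is to do each direction by quoting the appropriate lemma.

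For the forward direction, I start from a connected set $\C\subset\K{X}$ with $|\C|>1$ and pick any $K\in\C$. Proposition \ref{07} supplies a closed set $F\subset X$ with $K\subsetneq F$ such that $\D=\{K\cup\{x\}:x\in F\}$ is a connected subset of $\K{X}$ with $|\D|>1$. Since $X$ is hereditarily disconnected and $F\subset X$, $F$ is itself hereditarily disconnected, so Remark \ref{converseconnected} applies to $(F,K)$ and yields that $K$ intersects every quasicomponent of $F$. This is the required $(F,K)$.

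For the reverse direction, I suppose such $F$ and $K$ are given and set $\C=\{K\cup\{x\}:x\in F\}\subset\K{F}\subset\K{X}$. Since $K\subsetneq F$, picking $x\in F-K$ gives $K\cup\{x\}\neq K$, so $|\C|>1$. To apply Lemma \ref{01} inside the Hausdorff space $F$, I need to verify that every clopen subset $W\subset F$ containing $K$ equals $F$. If not, pick $y\in F-W$; then $F-W$ is clopen in $F$, so $\q{F}{y}\subset F-W$, hence the hypothesis gives a point $k\in K\cap\q{F}{y}\subset K\cap(F-W)$, contradicting $K\subset W$. Therefore $W=F$, Lemma \ref{01} applies in $\K{F}$ and produces a connected $\C$ in $\K{F}$; since $F$ is closed in $X$, $\K{F}$ embeds as a subspace of $\K{X}$, so $\C$ is also connected in $\K{X}$.

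There is no real obstacle: the only thing to check carefully is the translation between ``$K$ meets every quasicomponent of $F$'' and ``every clopen set in $F$ containing $K$ equals $F$'', which is the elementary quasicomponent computation above. Everything else is a direct citation of Lemma \ref{01}, Proposition \ref{07}, and Remark \ref{converseconnected}.
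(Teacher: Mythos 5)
Your proof is correct and takes essentially the same route the paper indicates: the forward direction is exactly the citation of Proposition \ref{07} followed by Remark \ref{converseconnected}, and the reverse direction is the quasicomponent-to-clopen translation feeding Lemma \ref{01} applied inside $\K{F}$ (with $\K{F}$ a subspace of $\K{X}$), which is precisely the step the paper itself uses at the end of the proof of Proposition \ref{07}. You have merely written out in full what the paper leaves as an immediate consequence.
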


However, this result is not something tangible in the following sense. Corollary \ref{06} says that if we want to know whether $\K{X}$ is hereditarily disconnected we just have to examine an specific space $X/T$. However, Proposition \ref{ugly} says we must look for some undetermined subsets $K$ and $F$.
 
\begin{ques}
Give tangible conditions on $X$ so that $\K{X}$ is hereditarily disconnected.
\end{ques}

\begin{ques}
Let $X$ be a homogeneous (topological group, perhaps) hereditarily disconnected space. Can one give some characterization of hereditarily disconnectedness of $\K{X}$ in terms of (iterated) quasicomponents and/or the space of quasicomponents $\qs{X}$?
\end{ques}

\end{document}